\documentclass[11pt]{article}
\usepackage[utf8]{inputenc}
\voffset -0.8cm 
\hoffset -1,1cm 
\textwidth 15cm 
\textheight 20cm 
\date{}

\usepackage{latexsym, color, amssymb, amsfonts,amsmath}
\usepackage{graphicx}
\usepackage{mathrsfs}
\usepackage{latexsym}
\usepackage{enumerate}
\usepackage{verbatimbox}

\usepackage{latexsym, color, amssymb, amsfonts,amsmath}
\usepackage{graphicx}
\usepackage{mathrsfs}
\usepackage{latexsym}
\usepackage{enumerate}
\usepackage{verbatimbox}

\usepackage{amsmath}
\usepackage{amsfonts}
\usepackage{amssymb}
\usepackage{color}
\usepackage{tikz}
\usetikzlibrary{arrows,backgrounds,positioning,fit,shapes,shapes.misc}


\usepackage{euscript}

\usepackage{mathrsfs}
\usepackage{multicol}

\usepackage{euscript}

\usepackage{amssymb}
\usepackage{mathrsfs}
\usepackage{multicol}

\newenvironment{dem}{\noindent\bf Proof.\rm}{\hfill $\negr{\Box}$}

\def\NN{\mbox{{\sl I}}\!\mbox{{\sl N}}}

\newtheorem{proposition}{\bf Proposition}[section]

\newtheorem{theorem}[proposition]{\bf Theorem}
\newtheorem{coro}[proposition]{\bf Corollary}
\newtheorem{remark}[proposition]{\bf Remark}
\newtheorem{definition}[proposition]{\bf Definition}
\newtheorem{defi}[proposition]{\bf Definition}
\newtheorem{prop}[proposition]{\bf Proposition}
\newtheorem{lem}[proposition]{\bf Lemma}
\newtheorem{theo}[proposition]{\bf Theorem}

\newenvironment{proof}{\noindent\bf Proof.\rm}{\hfill $\negr{\Box}$}

 \relax

\newcommand{\negr}[1]{\boldsymbol{#1}}

\date{}

\title{Sentential logics based on $k$-cyclic modal pseudocomplemented De Morgan algebras}

\author{Aldo Figallo-Orellano, Miguel Per\'ez-Gaspar  and  Juan Manuel Ram\'irez-Contreras}
\begin{document}



\maketitle

\thispagestyle{empty}

\begin{abstract}
The study of the theory of operators over modal pseudocomplemented De Morgan algebras was begun in  the papers \cite{AFO1} and \cite{AFO2}. In this paper, we introduce and study the class of   modal pseudocomplemented De Morgan algebras enriched by an automorphism $k$-periodic (or ${\cal C}_k$-algebras) where $k$ is a positive integer; for $k=2$ the class coincides with the one  studied in \cite{AFO1} where the automorphism works as a new unary operator. In the first place, we prove the class ${\cal C}_k$-algebras is a semisimple variety and we determine the generating algebras. Afterwards, we calculate the cardinality of the free ${\cal C}_k$-algebra with $n$ generator. After the algebraic study, we built two sentential logics that have as algebraic counterpart the class of ${\cal C}_k$-algebras that we denote $\mathbb{L}_{k}^{\leq}$ and $\mathbb{L}_{k}$ for every $k$. $\mathbb{L}_{k}$ is a $1$-assertional logic and  $\mathbb{L}_{k}^{\leq}$ is the degree-preserving logic both associated with the class of  ${\cal C}_k$-algebras. Working over these logics, we prove that $\mathbb{L}_{k}^{\leq}$ is paraconsistent,  which is protoalgebraic and finitely equivalential but not algebraizable. In contrast, we prove that  $\mathbb{L}_{k}$ is algebraizable,  sharing the same theorem with  $\mathbb{L}_{k}^{\leq}$, but not paraconsistent.

\end{abstract}

\section{Introduction and Preliminaries}\label{preli}

In 1978, A. Monteiro introduced tetravalent modal algebras (or TMA-algebras for short) as algebras $\langle L,\wedge, \vee, \sim, \nabla, 0, 1 \rangle$ of type $(2, 2, 1, 1, 0, 0)$ such that $\langle L,\wedge, \vee, \sim, 0, 1 \rangle$ are De Morgan algebras which satisfy the following conditions:

\begin{center} $\nabla x \vee \sim x=1$,

 $\nabla x \wedge \sim x= \sim x \wedge x$. 
\end{center}

These algebras arise as a generalization of three--valued {\L}ukasiewicz algebras by omitting the identity $\nabla (x \wedge y)= \nabla x \wedge \nabla y$. The variety of TMAs is generated by the well-known four-element De Morgan algebra expanded with a simple modal
operator $\nabla$ (i.e., $\nabla 1=1$ and $\nabla x=0$ for $x\not=1$. Besides, $\sim 0=1$ and $\sim x= x$ for $x\not=0,1$). These algebras were studied by  I. Loureiro, A. V. Figallo and A. Ziliani, see \cite{FL,FZ1} for instance.

Later, J. Font and M. Rius introduced some logics that have as algebraic counterpart to the class of TMA-algebras. In particular, they defined the logic ${\cal TML}$ which can be seen as the degree-preserving logic associated with the class of TMA-algebras, see \cite[Proposition 3.7]{FR} and \cite{MF1}. These authors also proved that the logic ${\cal TML}$ is not algebraizable with  Blok-Pigozzi's method. More recently, M. Figallo proved that ${\cal TML}$  is in fact a paraconsistent logic and a {\em Logic of Formal Inconsistency}  in \cite{MF}. Recall that the  Logics of Formal Inconsistency (LFIs) were introduced by W. Carnielli and J. Marcos (see \cite{CM}), and the Logics of Formal Undeterminedness (LFUs) was introduced in \cite{Marcos}. Other paraconsistent degree-preserving logics were studied by Ertola {\em et al.} in \cite{CE} and \cite{Erto15}; besides, these kinds of logics were built over {\em Distributive Involutive Residuated Lattices} in  \cite{EFFG}, where the authors presented a large family of LFIs and LFUs. 

Another class of algebras related with TMA-algebra are {\em Stone Involutive Algebras} (SI-algebras), these structures are De Morgan algebras enriched with a modal operator similar to $\nabla$ considered above; they were defined  by Cingoli and Gallego in \cite{CG}. Later, L. Cant\'u and M. Figallo studied  the degree-preserving logic associated with the class of  SI-algebras (${\cal SI}^\leq$) providing a sound and complete  Gentzen calculus. Besides,  they proved that ${\cal SI}^\leq$ is a {\em strong} LFI. Later on, S. Marcelino and U. Rivieccio proved that ${\cal SI}^\leq$ is selfextensional and non-protoalgebraic logic; hence, non-algebraizable, \cite{MR}.

On the other hand, A. V. Figallo considered  the subvariety of pseudocomplemented De Morgan algebras which verifies: (tm) $x\vee \sim x \leq x\vee x^\ast $ in \cite{AVF}. This author called them modal pseudocomplemented De Morgan algebras (or $mpM-$algebras). Recall that a pseudocomplemented De Morgan algebra $A$ is a De Morgan algebra with a unary operator $^\ast$  such that every $a\in A$, the element $a^\ast$ is the pseudocomplement of a; i.e. $a\wedge x = 0$ if and only if $x \leq a^\ast$. Later, he showed that every $mpM-$algebra is a TMA by defining $\nabla x=\,\sim(\sim x \wedge x^\ast)$ and $\triangle x = \sim \nabla \sim x$.  In \cite{NO} (see also \cite{NO1}), the authors have proven that the subdirectly irreducible $mpM-$algebras are three as TMAs, in fact, Hasse diagrams  are the same in each case, but $3$-chain-$mpM-$algebra is not a subalgebra of four-elements. The mentioned algebras are the following:  $T_2=\{0, 1\}$ with $0<1$, $\sim 0 =0^{\ast}=1$, $\sim 1 =1^{\ast}=0$; $T_3=\{0,a, 1\}$, with $0 < a <1$, $\sim a = a$, $a^{\ast} =0$, $\sim 0 = 0^{\ast} =1$,  $\sim 1 =1^{\ast} =0$; $T_4=\{0,a, b, 1\}$ with $a \not\leq b$, $b\not\leq a$ and $0< a, b <1$, $\sim b =a^{\ast} =b$,  $\sim a=b^{\ast} =a$, $\sim 0 =0^{\ast} =1$,  $\sim 1 =1^{\ast} =0$. It is worth mentioning that $mpM-$algebras constitute a proper subvariety of the variety ${\cal V}_0$  studied by  H. Sankappanavar in \cite{HS}. More recently, the theory of operators over $mpM-$algebras was considered in \cite{AFO1,AFO2}. In particular, the authors studied the class of  $mpM-$algebras enriched with an automorphism of period $2$, this automorphism works as a new unary operator, \cite{AFO2}.

One of the principal objects of the present paper is  to generalize the work of \cite{AFO2} for an automorphism of period $k$, where $k$ is a positive integer. This kind of operators was studied over other algebraic strucures, for instance, over Post algebras of order $p$ by L\'opez-Martinolich in \cite{Marti} (see also \cite{Marti1,DV11}), where the variety of $k$-cyclic Post algebras of order $p$  was studied. Furthermore, it was proved that  there exists an equivalence between the variety generated by the simple $k$-cyclic Post algebra of order $p$ and  the variety generated by the finite field with $p^k$ elements, see  \cite{Marti1}.

The paper is organized as follows: In Section 2, we introduce the class of ${\cal C}_k$-algebras which is in fact a variety; and, we display some properties of {\em prime spectrum} for a given algebra. Section 3, we present a new implication in order to use A. Monteiro's techniques to prove the variety semisimplicity. Section 4, maximal congruences and simple algebras of the class are characterized; and, as consequence, the simple ${\cal C}_k$-algebras are determined. In Section 5, we describe the free algebra with $n$ generators calculating their cardinality using the algebraic properties displayed before; as a by-product, we check that for $k=2$ and $k=1$ the cardinality coincides with others obtained in the literature. Finally, in Section 6 and 7, we build a family of sentential logics that have as algebraic counterpart the class of ${\cal C}_k$-algebras denoted $\mathbb{L}_{k}^{\leq}$ and $\mathbb{L}_{k}$ for every $k$. The logic $\mathbb{L}_{k}$ is $1$-assertional  and  $\mathbb{L}_{k}^{\leq}$ is the degree-preserving logic, both associated with the class of  ${\cal C}_k$-algebras. Firstly, we prove that $\mathbb{L}_{k}^{\leq}$ is an LFI (hence paraconsistent) and an LFU which is protoalgebraic and finitely equivalential but not algebraizable. Finally, we prove that  $\mathbb{L}_{k}$ is algebraizable which is an LFU and shares the same theorem with  $\mathbb{L}_{k}^{\leq}$ but it is not paraconsistent.

\section{ The class of $k$-cyclic $mpM$-algebras: ${\cal C}_k$-algebras}

In this section, we will introduce a new class of algebras that we call  ${\cal C}_k$-algebras which is in fact an equational one. Later, we will display  algebraic properties with the purpose to determine the generating algebras of the variety. First, let us consider the following definition:

\begin{defi}
A $k$-cyclic $mpM$-algebra {\rm (}for short ${\cal C}_k$-algebra{\rm )} is a pair  $(A,t)$ where  $A$ is an  $mpM$-algebra and the function  $t:A\to A$ is an automorphism  such that $t^k (x)=x$ where $k$ is an integer $k\geq 0$. Besides, we write $t^0(x)=x$ and $t^{n}(x) = (t^{n-1}\circ t)(x)$ if $n\geq 1$.  
\end{defi}

Sometimes we write ``$tx$'' instead of ``$t(x)$''.  It is worth mentioning that the class ${\cal C}_k$-algebra is a variety; and, as examples, we have the classes  $1$-cyclic $mpM$-algebra  and  $2$-cyclic $mpM$-algebra were studied in \cite{NO} and \cite{AFO1}, respectively.  Besides,  if an  $mpM$-algebra  is a $3$-valued \L ukasiewicz algebra then the $k$-cyclic version is a special class studied in \cite{Marti,Marti1,DV11}. Now, we present a Lemma that will be used in the rest of the paper. 

\begin{lem}{\rm(\cite{AFO1})}
In every ${\cal C}_k$-algebra, the following conditions hold: \\[1.5mm]
\begin{tabular}{ll}
{\rm (T1)} \, $\triangle 0=0$, $\triangle 1=1$, & {\rm (T2)} \, $\triangle x\leq x$, $x\leq \nabla x$,\\[1mm]
{\rm (T3)} \, if $x \leqslant y$ then $\triangle x\leq \triangle y$,& {\rm (T4)} \, $\triangle x$ is a Boolean element,\\[1mm]
{\rm (T5)} \,  $(\sim\triangle x)^\ast=  \triangle x$, & {\rm (T6)} \,  $(\sim\triangle x)^\ast=  \triangle (\sim x)^\ast$,\\[1mm]
{\rm (T7)} \, $\triangle\triangle x= \triangle x $, $\triangle \nabla x = \nabla x$, $ \nabla \triangle  x = \triangle x$, & {\rm (T8)} \, $  (\triangle x)^\ast= \sim \triangle x$, \\[1mm]
{\rm (T9)} \, $\triangle \sim \triangle x=\,\sim \triangle x$, & {\rm (T10)} \, $\triangle (x \wedge y)= \triangle x \wedge \triangle y$, \\[1mm]
{\rm (T10)(bis)} \,  $\nabla (x \vee y)= \nabla x \vee \nabla y$, & {\rm (T11)} \,  $x\in \triangle A$ iff $x= \triangle x$ iff $x= \nabla x$, \\[1mm]
{\rm (T12)} \, $\triangle A$ is an $S$-subalgebra of $A$, & {\rm (T13)} \, $\sim x \wedge \triangle x = 0$ {\rm (}or  $ x \vee \nabla \sim x = 1${\rm )},\\[1mm]
{\rm (T14)} \, $x\vee \sim \triangle x =1$, & {\rm (T15)} \, $\triangle( \triangle x \vee y) = \triangle x \vee \triangle y$, \\[1mm]
{\rm (T16)} \,  $\triangle( \sim \triangle x \vee y) = \sim \triangle x \vee \triangle y$, & {\rm (T17)} \, $\nabla (\triangle x \wedge y ) = \triangle x \wedge \nabla y$,\\[1mm]
{\rm (T18)} \, $\triangle( \nabla x \vee  y ) = \nabla x \vee \triangle y$, & 
\end{tabular}
\end{lem}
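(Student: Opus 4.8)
I would organise the proof around two observations. First, none of the identities (T1)--(T18) mentions the operator $t$: each of them lives entirely in the $mpM$-fragment $\langle\wedge,\vee,\sim,{}^\ast,\triangle,\nabla,0,1\rangle$, where $\nabla x=\,\sim(\sim x\wedge x^\ast)$ and $\triangle x=\,\sim\nabla\sim x$. Hence it suffices to prove them in an arbitrary $mpM$-algebra, and they then hold in every ${\cal C}_k$-algebra because its $mpM$-reduct is an $mpM$-algebra; this is exactly why the statement is \emph{verbatim} the one of \cite{AFO1} (and, for the $t$-free part, of \cite{AVF}). Second, unwinding the definitions once and for all, a one-line computation gives $\triangle x=x\wedge(\sim x)^\ast$ and, dually, $\nabla x=x\vee\,\sim(x^\ast)$, together with $\sim\triangle x=\nabla\sim x$ and $\sim\nabla x=\triangle\sim x$. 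This De Morgan duality pairs the identities --- (T2), (T10)/(T10)(bis), (T13)/(T14), (T15)/(T17), (T16)/(T18), and so on --- so that in each pair only one member needs to be proved.

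The easy block comes first and does not use the modal axiom at all. (T1) follows from $0^\ast=1$, $1^\ast=0$; (T2) from $\triangle x=x\wedge(\sim x)^\ast\leq x$; (T3) from monotonicity of $\wedge$ and antitonicity of ${}^\ast$; (T13) is immediate, since $\sim x\wedge\triangle x=\bigl(\sim x\wedge(\sim x)^\ast\bigr)\wedge x=0$; and (T14) is (T13) read through $\sim$, as $\sim(\sim x\wedge\triangle x)=x\vee\,\sim\triangle x=1$. The dual statements in (T10)(bis) and the rest are obtained by the pairing above.

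The core of the lemma is (T4): $\triangle x$ is a Boolean element, with complement $\sim\triangle x$. This is the one genuinely non-formal step, and it is where the defining inequality $(\mathrm{tm})$ $x\vee\,\sim x\leq x\vee x^\ast$ is spent. Concretely, applying $\sim$ to $(\mathrm{tm})$ and substituting $\sim x$ for $x$ yields $x\wedge\,\sim\!\bigl((\sim x)^\ast\bigr)\leq\,\sim x$; meeting with $(\sim x)^\ast$ gives $\triangle x\wedge\,\sim\triangle x=0$, while $(\mathrm{tm})$ applied to $\sim x$ gives $x\leq\,\sim x\vee(\sim x)^\ast$, whence $\triangle x\vee\,\sim\triangle x\geq x\vee\,\sim x\vee\,\sim\!\bigl((\sim x)^\ast\bigr)=x\vee\,\sim\triangle x=1$ by (T14). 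With (T4) in hand the rest of the list is bookkeeping: (T8) $(\triangle x)^\ast=\,\sim\triangle x$ because the pseudocomplement of a complemented element is its complement (and dually $(\nabla x)^\ast=\,\sim\nabla x$, using that $\sim$ of a Boolean element is Boolean); (T5), (T6), (T9) by direct substitution; (T7) because $\triangle$ and $\nabla$ fix Boolean elements and $\triangle x$, $\nabla x$ are such; (T11) by identifying $\triangle A=\{a:\triangle a=a\}=\{a:\nabla a=a\}$ with the set of Boolean elements of $A$, on which $\sim$ and ${}^\ast$ coincide; (T12) because that set is closed under all operations of the signature (with $\triangle,\nabla$ restricting to the identity), so it is a subalgebra, and being Boolean it is an $S$-subalgebra.

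Finally the distribution laws. For (T10) the inclusion $\triangle(x\wedge y)\leq\triangle x\wedge\triangle y$ is just (T3); for the reverse, note that if $b$ is Boolean with $b\leq z$ then $b\wedge\,\sim z\leq b\wedge b^\ast=0$, so $b\leq(\sim z)^\ast$ and hence $b\leq\triangle z$; taking $b=\triangle x\wedge\triangle y$ and $z=x\wedge y$ gives the claim, and (T10)(bis) is the dual. For (T15), write $c=(\triangle x)^\ast=\,\sim\triangle x$; since $c$ is Boolean, $\bigl(\sim(\triangle x\vee y)\bigr)^\ast=(c\wedge\,\sim y)^\ast=c^\ast\vee(\sim y)^\ast=\triangle x\vee(\sim y)^\ast$, so $\triangle(\triangle x\vee y)=(\triangle x\vee y)\wedge\bigl(\triangle x\vee(\sim y)^\ast\bigr)=\triangle x\vee\bigl(y\wedge(\sim y)^\ast\bigr)=\triangle x\vee\triangle y$; (T16) is the same computation with the Boolean element $\sim\triangle x$ in the place of $\triangle x$, and (T18) with $\nabla x$ (also Boolean), while (T17) and (T18) also drop out as the De Morgan duals of (T16) and (T15). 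The only real obstacle is (T4): once the modal axiom has been used there, everything else is routine distributive-lattice and pseudocomplement manipulation; and since the statement is taken verbatim from \cite{AFO1}, one may alternatively simply invoke that reference.
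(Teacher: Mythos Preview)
Your proposal is correct; the (T4) step is a touch compressed but goes through: from $x\leq\,\sim x\vee(\sim x)^\ast$ you get $x=(x\wedge\sim x)\vee(x\wedge(\sim x)^\ast)\leq\,\sim x\vee\triangle x\leq\,\sim\triangle x\vee\triangle x$, and (T14) then finishes. The remaining items follow as you indicate.

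As for comparison with the paper: there is nothing to compare. The paper gives no proof of this lemma at all --- it is stated with a bare citation to \cite{AFO1}, and every identity lives in the $mpM$-reduct, so the result is inherited verbatim from that reference. Your write-up therefore does strictly more than the paper does: you supply an actual argument (organised around the explicit formula $\triangle x=x\wedge(\sim x)^\ast$, the De Morgan duality $\sim\triangle=\nabla\sim$, and the single non-trivial use of the axiom (tm) to obtain (T4)), whereas the paper simply defers to the earlier article. Your closing remark that ``one may alternatively simply invoke that reference'' is, in fact, exactly what the paper does.
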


\

The relation between the congruences and certain filters is given by the following Remark.

\begin{remark}\label{Ockc}
Taking into account the studied of congruences of  \cite[Theorem 2.6]{AFO1}, we can affirm that the  $c$-filters (i.e. $t(\triangle F)=F$ where $F$ is a filter)  characterize the congruences of given  ${\cal C}_k$-algebra, moreover:

\begin{itemize}
\item[{\rm(i)}] The relation $Con (A)= \{R(F): F\in {\cal C} (A)\}$ where $R(F) = \{(x,y) \in A\times A:$ there is $f \in F$ such that $x\wedge f= y \wedge f \}$ is a congruence, we denote by  $ {\cal C} (A)$ the set of all $c$-filters of $A$.

\item[{\rm (ii)}] The posets $Con (A)$ and  $ {\cal C} (A)$ are isomorphic where $Con (A)$  is the poset of all congruences of $A$.
\end{itemize}
\end{remark}

Now, we give a result to be used to characterize the prime spectrum for a given  ${\cal C}_k$-algebra. First, recall that the notion of prime filter, ultrafilter, maximal and minimal are in the usual ones, see \cite{RB.PD}. Also, we are going to use the well-known Birula--Rasiowa transformation $\varphi$, see, for instance, \cite{FR}. Recall that, if $A$ is a De Morgan algebra, the map $\varphi$ is defined as follows: For every prime filter $P$ of $A$ 

$$\varphi(P)=A \backslash \sim P=A \backslash \{\sim x: x\in P\}$$

This map has the following properties:

\begin{itemize}
\item[$\bullet$] $\varphi(P)$ is a prime filter of $A$,
\item[$\bullet$] $\varphi(\varphi(P))= P$, 
\item[$\bullet$] if $Q$ is a prime filter of $A$ such that $P\subseteq Q$ then $\varphi(Q)\subseteq \varphi(P)$.
\end{itemize}

Then, we have the following Lemma:

\begin{lem}\label{C2L1}
Let $(A,t)$ be a  ${\cal C}_k$-algebra, $P\subseteq A$ a prime filter, and $\varphi$ is the Birula--Rasiowa  transformation on $A$. Then, the following properties hold:

\begin{enumerate}
\item[\rm (a)] $t^i(P)$ is a prime filter for  $1\leq i\leq k$;
\item[\rm (b)] $P$ is minimal {\rm(}maximal{\rm)} iff  $t^i(P)$ is minimal {\rm(}maximal{\rm)} for $1\leq i\leq k$; 
\item[\rm (c)] $U$ is an ultrafilter iff $t^i(U)$ is an ultrafilter for $1\leq i\leq k$;
\item[\rm (d)] $\varphi(t^i(P))=t^i \varphi(P)$, $1\leq i\leq k$;
\item[\rm (e)] if $F$ is a $c$-filter of $A$ and $F\subseteq P$, then $F\subseteq \varphi(P)\cap t^i(P)$;
\item[\rm (f)] if $P\subseteq Q$ and $Q$ is a prime filter of  $A$, then $\varphi(P)=Q$ \'o $P=Q$. 
\end{enumerate}
\end{lem}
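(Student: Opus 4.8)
The plan is to derive (a)--(d) purely from the fact that each $t^i$ is an automorphism of the $mpM$-algebra $A$ (hence a bounded-lattice isomorphism commuting with $\sim$ and $^{\ast}$, with inverse $t^{k-i}$), to handle (e) from the shape of $c$-filters, and to leave the real work for (f), which is a statement about $A$ alone ($t$ being irrelevant there). For (a): a lattice isomorphism carries prime filters to prime filters, so $t^i(P)$ is prime. For (b) and (c): $t^i$ is an order-automorphism of the poset $\mathrm{Spec}(A)$ of prime filters (order-preserving and order-reflecting, being a bijection whose inverse $t^{k-i}$ is also order-preserving), so it preserves and reflects ``minimal'' and ``maximal''; since an ultrafilter is exactly a maximal proper filter, (c) is the ``maximal'' case of (b). For (d): $t$ commutes with $\sim$, so $\sim(t^iP)=t^i(\sim P)$, and since $t^i$ is a bijection of the set $A$, $\varphi(t^iP)=A\setminus\sim(t^iP)=A\setminus t^i(\sim P)=t^i(A\setminus\sim P)=t^i\varphi(P)$.

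For (e), I would first record, from Remark~\ref{Ockc} and the theory of congruences of modal $mpM$-algebras, that a $c$-filter $F$ is $t$-invariant (from $t(\triangle F)=F$ and $\triangle F\subseteq F$ one gets $t^{-1}F\subseteq F$, hence $tF=F$ by periodicity) and $\triangle$-saturated, i.e. $x\in F\Rightarrow\triangle x\in F$. Then $F=t^iF\subseteq t^iP$ because $F\subseteq P$; and for $a\in F$ we have $\triangle a\in F\subseteq P$, so if $\sim a$ were in $P$ then $\sim a\wedge\triangle a=0$ (T13) would lie in $P$, contradicting properness of $P$. Hence $\sim a\notin P$, i.e. $a\in A\setminus\sim P=\varphi(P)$, and so $F\subseteq\varphi(P)\cap t^i(P)$.

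Item (f) is the crux, and I would reduce it to the single claim ($\star$): \emph{if $P\subsetneq Q$ are prime filters then $Q\subseteq\varphi(P)$}. Granting ($\star$): applying it to $P\subsetneq Q$ gives $Q\subseteq\varphi(P)$; since $\varphi$ is an order-reversing involution, $\varphi Q\subsetneq\varphi P$, so ($\star$) applied to that pair gives $\varphi P\subseteq\varphi(\varphi Q)=Q$; hence $Q=\varphi(P)$, while if $P=Q$ there is nothing to prove.

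To prove ($\star$), let $x\in Q$ and suppose for contradiction $\sim x\in P$. Since $\sim x\in P\subseteq Q$ and $P\subsetneq Q$, a chosen $c_{0}\in Q\setminus P$ may be replaced by $c:=c_{0}\wedge\sim x$, which still lies in $Q\setminus P$ and now satisfies $c\le\sim x$; hence $x\le\sim c$, so $\sim c\in Q$ as well. From $c,\sim c\in Q$ and T13 ($\sim c\wedge\triangle c=0$ and $c\wedge\triangle\sim c=0$), neither $\triangle c$ nor $\triangle\sim c$ can lie in $Q$, so their Boolean complements $\sim\triangle c$ and $\nabla c=\sim\triangle\sim c$ lie in $P$; feeding these back, together with the identities $\triangle z=z\wedge(\sim z)^{\ast}$ and $\nabla z=z\vee\sim(z^{\ast})$ (immediate from the definitions of $\nabla,\triangle$), the defining inequality $x\vee\sim x\le x\vee x^{\ast}$, and the primeness of $P$ and $Q$, one is meant to force $0\in Q$. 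The delicate bookkeeping in this last step --- tracking which $\triangle$-, $\nabla$- and $^{\ast}$-combinations land in $P$ versus in $Q$, and seeing exactly where strictness $P\subsetneq Q$ is used --- is the part I expect to be the main obstacle; the one structural simplification worth isolating first is that comparable prime filters have the same trace on the Boolean skeleton $\triangle A$ (if $\triangle z\notin P$ then $\sim\triangle z\in P\subseteq Q$, so $\triangle z\in Q$ would give $0\in Q$), which confines the whole argument to the ``$T_{3}$-type'' part of $\mathrm{Spec}(A)$.
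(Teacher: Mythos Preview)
Your treatment of (a)--(e) is correct and considerably more explicit than the paper, whose entire proof is the single sentence ``It follows from the very definitions.'' For (a)--(d) the automorphism argument is exactly what is meant, and for (e) your use of (T13) together with the closure of $c$-filters under $\triangle$ and $t$ is the intended mechanism.

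The genuine gap is in (f). Your reduction to the claim ($\star$) ``$P\subsetneq Q\Rightarrow Q\subseteq\varphi(P)$'' via the order-reversing involution $\varphi$ is clean and correct, but you do not actually prove ($\star$): after producing $c\in Q\setminus P$ with $c,\sim c\in Q$ and observing $\triangle c,\triangle\sim c\notin Q$, you write that ``feeding these back \dots\ one is meant to force $0\in Q$'' and explicitly flag this as ``the main obstacle.'' As it stands, nothing you have recorded forces a contradiction; in particular, the (correct) remark that $P$ and $Q$ have the same trace on $\triangle A$ does not by itself finish the argument, since all you extract from it is $\nabla c,\nabla\sim c\in P$, and $\nabla c\wedge\nabla\sim c$ need not be $0$. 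Item (f) involves no $t$ at all: it is the known structural fact about the prime spectrum of an $mpM$-algebra that any strict inclusion of prime filters $P\subsetneq Q$ forces $Q=\varphi(P)$, available from the $mpM$ literature the paper cites (\cite{NO}, \cite{AFO2}; see also \cite{HS}). The paper's one-line ``proof'' is really an appeal to that background rather than a self-contained argument, so your attempt already does more work than the authors do here; but if you want a complete direct proof you still owe the missing step.
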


\begin{dem}
It follows from the very definitions.
\end{dem}

\

In what follows, we will study the prime spectrum of a given ${\cal C}_k$-algebra $A$ with an algebraic technique. First, let us observe the study could be done using topological representation through the techniques given in \cite{AFO2,NO}.

\begin{prop} \label{C2P1}
Let $(A,t)$ be a ${\cal C}_k$-algebra and  $U\subseteq A$  an ultrafilter. Then, $N= \bigcap \limits_{i=0}^{k-1} t^i U \cap \varphi(t^i U)$ is a  $c$-filter of $A$. 
\end{prop}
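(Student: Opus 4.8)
The plan is to verify the three defining properties of a $c$-filter for $N$: that $N$ is a filter, that $N$ is closed under $\triangle$, and that $t(N)=N$ (cf.\ Remark~\ref{Ockc} and \cite[Theorem 2.6]{AFO1}). The first is immediate: by Lemma~\ref{C2L1}(a) each $t^iU$ is a prime filter, hence so is each $\varphi(t^iU)$ by the listed properties of the Birula--Rasiowa transformation, and a finite intersection of filters is a filter. For the $t$-invariance, use that $t$ is an automorphism of the underlying $mpM$-algebra, so it carries prime filters bijectively to prime filters and commutes with intersection; together with Lemma~\ref{C2L1}(d) this gives $t\bigl(t^iU\cap\varphi(t^iU)\bigr)=t^{i+1}U\cap\varphi(t^{i+1}U)$, so that $t(N)=\bigcap_{i=1}^{k}\bigl(t^iU\cap\varphi(t^iU)\bigr)$; since $t^k=\mathrm{id}$, the term $i=k$ equals the term $i=0$, and this reindexed intersection is again $N$.

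The crux is $\triangle$-closure, and I would prove it via the following local fact: \emph{for every prime filter $P$ of an $mpM$-algebra, $P\cap\varphi(P)$ is closed under $\triangle$.} From the definitions $\nabla x=\,\sim(\sim x\wedge x^{\ast})$ and $\triangle x=\,\sim\nabla\sim x$ (and $\sim\sim x = x$) one obtains the identity $\triangle x=x\wedge(\sim x)^{\ast}$. Now let $x\in P\cap\varphi(P)$; then $x\in P$ and, since $\varphi(P)=A\setminus\sim P$, also $\sim x\notin P$. Instantiating the modal axiom (tm) at $\sim x$ yields $x\leq\,\sim x\vee(\sim x)^{\ast}$, so $\sim x\vee(\sim x)^{\ast}\in P$; as $P$ is prime and $\sim x\notin P$, necessarily $(\sim x)^{\ast}\in P$, whence $\triangle x=x\wedge(\sim x)^{\ast}\in P$. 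Moreover $\triangle x\wedge\sim\triangle x=\triangle x\wedge(\triangle x)^{\ast}=0\notin P$ by (T8) and the definition of the pseudocomplement, so $\sim\triangle x\notin P$, i.e.\ $\triangle x\in\varphi(P)$. Hence $\triangle x\in P\cap\varphi(P)$, proving the local fact.

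Applying this with $P=t^iU$ for $i=0,\dots,k-1$ shows that each $t^iU\cap\varphi(t^iU)$ is a $\triangle$-closed filter, and an arbitrary intersection of $\triangle$-closed filters is again $\triangle$-closed; hence $N$ is $\triangle$-closed, and combining with the first paragraph, $N$ is a $c$-filter. The only delicate point is the local fact in the second paragraph: one must intersect $P$ with $\varphi(P)$, since an ultrafilter by itself need not be $\triangle$-closed (as $T_4$ shows), and one must notice that instantiating (tm) at $\sim x$ is precisely what converts the hypothesis $\sim x\notin P$ into $(\sim x)^{\ast}\in P$; everything else is routine bookkeeping with Lemma~\ref{C2L1} and the properties of $\varphi$.
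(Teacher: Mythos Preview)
Your proof is correct and follows essentially the same approach as the paper: both check that $N$ is a filter, then establish $\triangle$-closure via primeness together with the modal axiom (the paper uses that $\sim x\vee\triangle x\geq x$ and splits this join in each prime $t^iU$ and $\varphi(t^iU)$, while you use the explicit identity $\triangle x=x\wedge(\sim x)^{\ast}$ and derive $(\sim x)^{\ast}\in P$ from (tm)), and finally $t$-invariance by reindexing modulo $k$ using Lemma~\ref{C2L1}(d). Your isolation of the local fact that $P\cap\varphi(P)$ is $\triangle$-closed for every prime $P$ is a clean way to package the argument.
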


\begin{dem}
It is clear that $N$ is a filter of  $A$. Then, we only have to prove that  $N$ is closed by $\bigtriangleup$ and $t$. Indeed, let $x\in N$ then by  (T14) we have that  $\sim x \vee \bigtriangleup x\in N$. Since $x\in \varphi(t^i U)$ for some $i$ such that $0,\leq i \leq k-1$, we infer that   $\sim x\not\in t^i U$ and now from  Lemma \ref{C2L1} (a) we have that   $\bigtriangleup x\in t^i U$. Now, if  $x\in t^i(U)$, then  $\sim x\not\in \varphi( t^i U)$ and so $\bigtriangleup x\in \varphi( t^i U)$. From the last assertions, we deduce that   $\bigtriangleup x \in N$. On the other hand, let us suppose $x\in N$, if $x\in \bigcap \limits_{i=0}^{k-1} t^i (U)$, then  $t(x)\in \bigcap \limits_{i=0}^{k-1} t^{i}(U)$.  Besides, if we have that $x\in \bigcap \limits_{i=0}^{k-1} \varphi(t^i U)$, then $t(x)\in \bigcap \limits_{i=0}^{k-1} t(\varphi t^{i}(U))$ and from Lemma \ref{C2L1} (d) we conclude that   $t(x)\in \bigcap \limits_{i=0}^{k-1} \varphi (t^{i} U)$. Therefore, $t(x)\in N$ as desired.
\end{dem}

\

In the following, we will show that every maximal $c$-filter can be expressed in terms of certain ultrafilters extending the result given \cite[Theorem 5.10]{AFO1}.

\begin{theo} \label{C2T1}
Let $(A,t)$ be a ${\cal C}_k$-algebra  and  $N \subseteq A$. Then,  $N$ is a maximal $c$-filter of if only if  there exists an  ultrafilter $U$ of $A$ such that  $$N=\bigcap \limits_{i=0}^{k-1} t^i U \cap \varphi(t^i U).$$
Besides, this representation of $N$ is  unique.
\end{theo}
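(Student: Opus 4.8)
The plan is to prove both implications, with the backward direction already essentially handled by Proposition \ref{C2P1} (which gives that $N=\bigcap_{i=0}^{k-1} t^i U \cap \varphi(t^i U)$ is always a $c$-filter) plus a maximality argument, and the forward direction obtained by starting with a maximal $c$-filter $N$, extending it to an ultrafilter, and showing the induced intersection coincides with $N$. First I would address the forward direction: let $N$ be a maximal $c$-filter. By Zorn's lemma, extend $N$ to an ultrafilter $U$ of the underlying De Morgan algebra $A$. Set $M=\bigcap_{i=0}^{k-1} t^i U \cap \varphi(t^i U)$. By Proposition \ref{C2P1}, $M$ is a $c$-filter. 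Since $N\subseteq U$ and $N$ is a $c$-filter, Lemma \ref{C2L1}(e) gives $N\subseteq \varphi(U)$ and $N\subseteq t^i(U)$ for each $i$ (because $N$ being a $c$-filter satisfies $t(\triangle N)=N$, so $N=t^i(\triangle^{?} N)\subseteq t^i(U)$, and symmetrically using the $\varphi$-closure); more precisely, applying Lemma \ref{C2L1}(e) to the prime filters $t^i(U)$ — which are prime by Lemma \ref{C2L1}(a) — and using that $N\subseteq U$ implies $t^i(N)\subseteq t^i(U)$ together with $t$-invariance of $N$, we get $N\subseteq t^i(U)\cap\varphi(t^i(U))$ for all $i$, hence $N\subseteq M$. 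Since $N$ is a maximal $c$-filter and $M$ is a proper $c$-filter ($M\subseteq U$ so $0\notin M$), we conclude $N=M$.

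For the backward direction, suppose $N=\bigcap_{i=0}^{k-1} t^i U \cap \varphi(t^i U)$ for some ultrafilter $U$. Proposition \ref{C2P1} already gives that $N$ is a $c$-filter, so it remains to show $N$ is maximal among $c$-filters. Suppose $F$ is a $c$-filter with $N\subsetneq F\subseteq A$ proper; I would derive a contradiction. The key point is that $F$ is contained in some prime filter $Q$, and by Lemma \ref{C2L1}(f) every prime filter above a $c$-filter is ``pinned down'': any chain of prime filters containing $F$ collapses because $P\subseteq Q$ forces $Q=P$ or $Q=\varphi(P)$. Concretely, take a prime (indeed ultra-) filter $V$ with $F\subseteq V$; then $N\subseteq F\subseteq V$, and I claim $V$ must be one of the $2k$ filters $t^i U$, $\varphi(t^i U)$. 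This follows because $N$, being the intersection of exactly those $2k$ prime filters, and each pair $\{t^iU,\varphi(t^iU)\}$ being an antichain of ultrafilters, forces any prime filter containing $N$ to sit above — hence (by primeness of $N$'s defining filters and Lemma \ref{C2L1}(f), (b), (c)) equal to — one of them. But then $F\subseteq V=t^{i_0}U$ (say), and applying $t$-orbit closure of $F$ and $\varphi$-type arguments as in Proposition \ref{C2P1}, $F$ must be contained in $t^iU\cap\varphi(t^iU)$ for all $i$, i.e. $F\subseteq N$, contradicting $N\subsetneq F$. Hence $N$ is maximal.

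For uniqueness, suppose $N=\bigcap_{i=0}^{k-1} t^i U \cap \varphi(t^i U)=\bigcap_{i=0}^{k-1} t^i U' \cap \varphi(t^i U')$ for ultrafilters $U,U'$. Since $U'\supseteq N$ and $U'$ is an ultrafilter (hence prime), the argument above shows $U'$ equals one of $t^i U$ or $\varphi(t^i U)$. In either case, because the family $\{t^i U\cap\varphi(t^i U)\}_i$ is invariant under replacing $U$ by $t^i U$ (trivially) and by $\varphi(U)$ (since $\varphi(t^iU)=t^i\varphi(U)$ by Lemma \ref{C2L1}(d) and $\varphi\varphi=\mathrm{id}$), the whole intersection $\bigcap_{i} t^i U'\cap\varphi(t^iU')$ equals $\bigcap_i t^iU\cap\varphi(t^iU)=N$ regardless — so $U'$ generates the same $N$, and the representation is unique in the sense that the set $\{t^iU,\varphi(t^iU): 0\le i\le k-1\}$ is determined by $N$.

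The main obstacle I expect is the backward direction's maximality argument: precisely pinning down that a prime filter containing $N$ must be \emph{equal} to one of the $2k$ filters $t^iU,\varphi(t^iU)$ (not merely contained in the union), and then propagating ``$F\subseteq t^{i_0}U$'' to ``$F\subseteq N$'' using the $t$- and $\triangle$-closure of the $c$-filter $F$ in the style of Proposition \ref{C2P1}. This requires care because a priori $F$ need only meet one filter of the orbit; one must use that $F$ is $t$-invariant to spread the containment across the whole orbit, and use (T14)-type identities together with Lemma \ref{C2L1}(a) to handle the $\varphi$-halves, exactly mirroring the computation in the proof of Proposition \ref{C2P1}.
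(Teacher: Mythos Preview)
Your proposal is correct and tracks the paper's argument closely. The forward direction is identical: extend the maximal $c$-filter $N$ to an ultrafilter $U$, use Lemma~\ref{C2L1}(e) (and Proposition~\ref{C2P1}) to obtain $N\subseteq\bigcap_i t^iU\cap\varphi(t^iU)$, then invoke maximality. For the backward direction the paper takes a slightly more economical path than your contradiction argument: instead of assuming a strictly larger $c$-filter $F\supsetneq N$ and deriving $F\subseteq N$, it extends $N$ to a maximal $c$-filter $W$, applies the already-established forward direction to write $W=\bigcap_i t^iY\cap\varphi(t^iY)$ for some ultrafilter $Y$, observes that since $Y$ is prime and contains $N$ some $t^iU$ or $\varphi(t^iU)$ must be contained in $Y$, and then uses maximality of the $t^iU$ together with Lemma~\ref{C2L1}(f) to conclude that $Y$ lies in the orbit $\{t^iU,\varphi(t^iU)\}$, whence $W=N$. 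This bootstrapping via the forward direction spares you the final step of spreading ``$F\subseteq t^{i_0}U$'' to ``$F\subseteq N$'' through repeated applications of Lemma~\ref{C2L1}(e), since that computation is already packaged in the forward implication. One small inaccuracy in your write-up: the filters $\varphi(t^iU)$ are \emph{minimal} prime filters, not ultrafilters, so the pair $\{t^iU,\varphi(t^iU)\}$ is not in general an antichain of ultrafilters; this does not, however, affect the validity of your argument.
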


\begin{dem}
Let $N$ be a maximal $c$-filter of $A$, since $N$ is proper there is an  ultrafilter $U$ of $A$ such that  $N\subseteq U$. Then, from  Lemma \ref{C2L1} we infer that $N\subseteq\bigcap \limits_{i=0}^{k-1} t^i U \cap \varphi(t^i U)$ and so $N=\bigcap \limits_{i=0}^{k-1} t^i U \cap \varphi(t^i U)$. 

Conversely, let us take  $U$ an ultrafilter of $A$ such that  $N = \bigcap \limits_{i=0}^{k-1} t^i U \cap \varphi(t^i U)$. From Proposition \ref{C2P1}, we have that  $N$ is a $c$-filter of $A$ and since  $N$ is a proper filter, then there is a maximal $c$-filter $W$ such that $N\subseteq W$. From the necessary condition, we know that there exists an ultrafilter $Y$ of $A$ such that $W= \bigcap \limits_{i=0}^{k-1} t^i Y \cap \varphi(t^i Y) $. Now, since  $
Y$ is a prime filter, we infer that  $t^i(U) \subseteq Y$  or $\varphi(t^i U) \subseteq Y$. Besides, from  Lemma \ref{C2L1}, we know that  $t^i(U)$ is maximal, and therefore  $t^i(U) = Y$. On the other hand, if   $\varphi(t^i U) \subseteq Y$, from Lemma \ref{C2L1}, we have that  $t^i U = \varphi(\varphi(t^i U)) = Y$ \'o $\varphi(t^i U) =Y$. Therefore, $N=W$ as desired.
\end{dem}

\

\begin{defi}\label{dfiltro}
Let $(A,t)$ be a ${\cal C}_k$-algebra, we say that a maximal $c$-filter  $N$ is $d$-periodic if  $d$ the smallest  positive integer such that  $t^d N=N$. In this case, we also say   $N$ is a $d$-filter of $A$.
\end{defi}

The last Definition plays an important role in the following Lemma.

\begin{lem}\label{div}
Let $(A,t)$ be a ${\cal C}_k$-algebra. If $N$ is a $d$-filter of  $A$, then the following conditions hold:

\begin{itemize}
\item[\rm (i)] $d/k$ where $/$ is the divide relation on integers,
\item[\rm (ii)] the unique ultrafilters of  $A$ such that they contain  $N$ are the form $\{ t^i P, \varphi(t^i P)\}_{\{ 0\leq i\leq d-1\}}$.
\end{itemize}
\end{lem}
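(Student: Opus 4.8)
The plan is to exploit Theorem \ref{C2T1}, which tells us that a maximal $c$-filter $N$ has the unique representation $N=\bigcap_{i=0}^{k-1} t^i U \cap \varphi(t^i U)$ for some ultrafilter $U$. The key observation is that $t$ permutes the set $\{t^i U\}_{i\in\ZZ}$ cyclically, and since $t^k$ is the identity, this orbit has size equal to the least positive integer $d$ with $t^d U = U$ (up to the possible identification $t^j U = \varphi(U)$, which must be handled carefully). Once we know the orbit structure, part (i) follows from the elementary fact that the period of an element under a cyclic group of order $k$ must divide $k$, and part (ii) amounts to identifying exactly which ultrafilters sit above $N$.

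First I would establish (i). By Theorem \ref{C2T1} write $N=\bigcap_{i=0}^{k-1} t^i P \cap \varphi(t^i P)$ for an ultrafilter $P$. Since $N$ is a $d$-filter, $d$ is the least positive integer with $t^d N = N$. On one hand, applying $t$ to the representation and using Lemma \ref{C2L1}(d) (i.e. $\varphi(t^i P)=t^i\varphi(P)$) shows $t^k N = N$, so the set of integers $m$ with $t^m N = N$ is a nontrivial subgroup of $\ZZ$ containing $k$; its generator is $d$, hence $d \mid k$. The only subtlety is to make sure $d$ as defined (least period of the filter $N$) really is the generator of this subgroup — this is immediate because $\{m : t^m N = N\}$ is closed under addition and subtraction, so it is $d\ZZ$ for its least positive element $d$, and $k$ lies in it.

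Next I would prove (ii). The ultrafilters containing $N$: by the argument inside the proof of Theorem \ref{C2T1}, if $U'$ is any ultrafilter with $N \subseteq U'$, then since each $t^i P$ and each $\varphi(t^i P)$ is an ultrafilter (Lemma \ref{C2L1}(a),(c)) and these are maximal, primeness of $U'$ forces $U'$ to equal one of $t^i P$ or $\varphi(t^i P)$. So the ultrafilters above $N$ are exactly $\{t^i P, \varphi(t^i P) : 0 \le i \le k-1\}$. It remains to see that this list collapses to the claimed $\{t^i P, \varphi(t^i P)\}_{0\le i\le d-1}$, i.e. that $t^{i+d}P \in \{t^j P, \varphi(t^j P)\}$ already for $j < d$. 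This follows from $t^d N = N$ together with uniqueness in Theorem \ref{C2T1}: applying $t^d$ to $N = \bigcap_{i} t^i P \cap \varphi(t^i P)$ and comparing with the unique representation of $N = t^d N$ built from the ultrafilter $t^d P$ forces the orbit of $P$ under $t$ (together with its $\varphi$-image) to have "width" $d$, so indexing $i$ from $0$ to $d-1$ already exhausts all ultrafilters over $N$.

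The main obstacle I anticipate is the bookkeeping around the Birula–Rasiowa map $\varphi$: $t$ and $\varphi$ commute (Lemma \ref{C2L1}(d)) and $\varphi$ is an involution, but it is possible that $\varphi(P)$ lies in the $t$-orbit of $P$ (e.g. $\varphi(P)=t^j P$ for some $j$), in which case the orbit $\{t^i P, \varphi(t^i P)\}$ is smaller than one might naively count. Handling this cleanly — rather than over- or under-counting the ultrafilters above $N$ — is where care is needed, and the right tool is precisely the uniqueness clause of Theorem \ref{C2T1}, which pins down the representation regardless of these coincidences. Everything else is routine manipulation of filters and the properties of $t$ collected in Lemma \ref{C2L1}.
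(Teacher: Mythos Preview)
Your proposal is correct and follows essentially the same approach as the paper. For (i), the paper argues even more directly than you do: $t^k N = N$ is immediate from the defining identity $t^k = \mathrm{id}$ of a ${\cal C}_k$-algebra (no need to invoke the representation from Theorem \ref{C2T1} or Lemma \ref{C2L1}(d)), and then the division algorithm $k = dq + r$ with $r<d$ gives $t^r N = N$, forcing $r=0$ by minimality of $d$---this is exactly your subgroup argument in elementary form. For (ii), the paper simply writes ``direct consequence from Definition \ref{dfiltro} and Theorem \ref{C2T1}''; your expansion (any ultrafilter above $N$ must appear among the $t^iP$, $\varphi(t^iP)$ by primeness and Lemma \ref{C2L1}(f), and the uniqueness clause of Theorem \ref{C2T1} applied to $N = t^dN$ collapses the index range to $0\le i\le d-1$) is precisely what that one-liner is pointing to, including your observation about the $\varphi$-bookkeeping.
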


\begin{dem}

\noindent (i): According to Definition  \ref{dfiltro}, it is clear that $d\leq k$, then we have that $k=dq+r$ with $r<d$. Thus,  $t^k N = t^{dq+r}N = t^r(t^{dq} N) =t^rN$ and since $(A,t)$ is   a ${\cal C}_k$-algebra, we infer that  $t^r N= N$ as desired.

\noindent (ii): It is direct consequence from  Definition \ref{dfiltro} and Theorem \ref{C2T1}.
\end{dem}

\section{The semsimplicity of the class of ${\cal C}_k$-algebras} 

In the following, we will characterize the congruences by means of certain deductive systems, for a given ${\cal C}_k$-algebra in order to prove that the variety of   ${\cal C}_k$-algebra is semisimple. Now, let $(A,t)$ be a ${\cal C}_k$-algebra, then we define a new implication $\rightharpoondown$ on $A$ as follows:

$$ a\rightharpoondown b = \bigvee \limits_{i=1}^{k} \nabla( \sim t^i a) \vee b.$$

Thus, we have the following Lemma that will be central in the rest of the section.

\begin{lem}\label{C2L2} In every  ${\cal C}_k$-algebra the following conditions hold:

\begin{multicols}{2}
\begin{enumerate}
  \item[\rm (i)]  $a\rightharpoondown a= 1$,
  \item[\rm (ii)] $a\rightharpoondown \triangle a = 1$,
  \item[\rm (iii)] $a\rightharpoondown t^s(a) = 1$, for every  $s\in \NN$,
  \item[\rm (iv)]  $a\rightharpoondown (a\wedge b) = a\rightharpoondown b$,
  \item[\rm (v)]  $a\leq b$ implies $c\rightharpoondown a \leq c\rightharpoondown b $,
  \item[\rm (vi)]   $((a\rightharpoondown b) \rightharpoondown  a)\rightharpoondown a)=1$,
  \item[\rm (vii)] $a\rightharpoondown ( b \rightharpoondown a)=1$,
  \item[\rm (viii)]  $1\rightharpoondown a = 1$ implies $a=1$,
  \item[\rm (ix)] $(a\rightharpoondown ( b \rightharpoondown c))= (( a\rightharpoondown b) \rightharpoondown (a\rightharpoondown c))$,
\item[\rm (x)] $a\rightharpoondown 1=1$. 
\end{enumerate}
\end{multicols}
\end{lem}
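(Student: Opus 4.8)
The plan is to verify each identity directly from the definition $a\rightharpoondown b = \bigvee_{i=1}^{k}\nabla(\sim t^i a)\vee b$, leaning on the Lemma of properties (T1)--(T18), the $mpM$-algebra axioms, and the fact that $t$ is an automorphism with $t^k=\mathrm{id}$. The first observation to record is that, since $t$ is an automorphism, it commutes with $\sim$, $\wedge$, $\vee$, $^\ast$, hence with $\triangle$ and $\nabla$; moreover $\{t,t^2,\dots,t^k\}$ is exactly the cyclic group generated by $t$, so sums indexed by $i=1,\dots,k$ are invariant under replacing $a$ by any $t^s(a)$. I would prove the items in an order that lets later ones reuse earlier ones rather than in the printed order.

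First I would establish (x) and (i): for (x), $a\rightharpoondown 1 = \bigvee_i \nabla(\sim t^i a)\vee 1 = 1$ trivially; for (i), note $\nabla(\sim t^k a) = \nabla \sim a$ appears as a disjunct (taking $i=k$, since $t^k a = a$), and $a\vee\nabla\sim a = 1$ by (T13), so $a\rightharpoondown a \geq a\vee\nabla\sim a = 1$. Item (ii) follows the same way using $\triangle a \vee \nabla\sim a = 1$, which holds because $\triangle a \geq a$ would give it from (T13) — more precisely, $\nabla\sim a \vee \triangle a = 1$ is exactly (T13) read as ``$x\vee\nabla\sim x=1$'' together with monotonicity, or one uses (T14) after swapping $x\mapsto\sim x$ and (T8). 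Item (iii), $a\rightharpoondown t^s(a)=1$, uses that among the disjuncts $\nabla(\sim t^i a)$ for $i=1,\dots,k$ one has $i$ with $t^i a = t^{i}a$ complementary to $t^s a$ in the sense that $t^{s}a \vee \nabla\sim t^{s}a = 1$; since $t^s a$ is a disjunct pattern obtainable (the index $i\equiv s \pmod k$ gives $\nabla\sim t^s a$ as a disjunct), we get $a\rightharpoondown t^s a \geq t^s a \vee \nabla\sim t^s a = 1$. Item (iv) is the distributivity computation $\bigvee_i\nabla(\sim t^i a)\vee(a\wedge b)$ versus $\bigvee_i\nabla(\sim t^i a)\vee b$: writing $c=\bigvee_i\nabla(\sim t^i a)$, one checks $c\vee a = 1$ (again $i=k$ disjunct plus (T13)), hence $c\vee(a\wedge b) = (c\vee a)\wedge(c\vee b) = c\vee b$. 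Item (v) is immediate monotonicity of $\vee$; (vii) is $a\rightharpoondown(b\rightharpoondown a) \geq a\rightharpoondown a = 1$ using (v) and (i) after noting $b\rightharpoondown a \geq a$; (viii): if $1\rightharpoondown a = 1$ then since $\nabla\sim t^i 1 = \nabla\sim 1 = \nabla 0 = 0$ by (T1)-type facts, the left side collapses to $a$, so $a=1$.

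The genuinely laborious items are (vi) and (ix), the ``combinatory'' axioms, and I expect (ix) to be the main obstacle. For these I would set $p(a) := \bigvee_{i=1}^k \nabla(\sim t^i a)$ and first prove the key algebraic fact that $p(a)$ is a Boolean (complemented) element with $p(a)\vee a = 1$ and, crucially, $p(a\rightharpoondown b) = p(a)\wedge p(b')$ for appropriate $b'$ — more usefully, that $a\rightharpoondown b$ is always ``above'' the Boolean skeleton in a way making the relevant expressions simplify. Concretely: each $\nabla(\sim t^i a)$ has the form $\nabla$ of something, and by (T7) $\nabla$-elements satisfy $\triangle\nabla x = \nabla x$; combined with (T10bis) $\nabla(x\vee y)=\nabla x\vee\nabla y$ one shows $p(a)$ is fixed by $\triangle$, hence Boolean by (T11)/(T4). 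Then $a\rightharpoondown b = p(a)\vee b$, and one computes both sides of (ix) and (vi) by repeatedly using $\nabla(p(a)\vee b) = p(a)\vee\nabla b$ (an instance of (T18) since $p(a)=\nabla p(a)$), $\sim t^i(p(a)\vee b) = t^i(\sim p(a))\wedge \sim t^i b$, and the absorption $p(a)\vee a = 1$. The computation for (ix) reduces, after these rewrites and using that $p$ of a join splits, to a Boolean-algebra identity in the $\triangle A$ part together with a distributivity check in $A$; I would verify it by expanding $a\rightharpoondown(b\rightharpoondown c) = p(a)\vee p(b)\vee c$ and $(a\rightharpoondown b)\rightharpoondown(a\rightharpoondown c)$, the latter being $p(a\rightharpoondown b)\vee p(a)\vee c$, and showing $p(a\rightharpoondown b) = p(p(a)\vee b) \leq p(a)\vee p(b)$ with the reverse inequality following from $p$ being monotone-ish on the relevant pieces — the delicate point, and the main obstacle, is controlling $p$ applied to a join, i.e.\ proving $p(p(a)\vee b) = p(a)\vee p(b)$ (not merely $\leq$), which needs $\nabla\sim t^i(p(a)\vee b) = \nabla(t^i\sim p(a)\wedge \sim t^i b)$ and then (T17)/(T10bis) to split the $\nabla$ of a meet with a Boolean element. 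Once that lemma on $p$ is in hand, (vi) is the analogous but easier Peirce-type identity $((p\vee b\text{-stuff})\vee a)\vee a = 1$ which collapses via $p(a)\vee a = 1$.
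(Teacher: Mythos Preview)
Your overall strategy---abbreviating $p(a)=\bigvee_{i=1}^{k}\nabla(\sim t^{i}a)$, observing that $p(a)$ is a $t$-invariant Boolean element (indeed $\sim p(a)=\bigwedge_{i}t^{i}\triangle a$), and rewriting $a\rightharpoondown b=p(a)\vee b$---is exactly what the paper does: the paper's $\nabla\sim z$ with $z=\bigwedge_{j}t^{j}a$ is your $p(a)$. The easy items (i)--(v), (vii), (viii), (x) are fine as you sketch them (your justification of (ii) is a bit garbled, but the needed fact $\triangle a\vee\nabla\sim a=\triangle a\vee\sim\triangle a=1$ is just (T4)).

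There is, however, a genuine error in your treatment of (ix). You claim the key step is $p(p(a)\vee b)=p(a)\vee p(b)$, but this is false: $p$ contains a $\sim$ and is order-\emph{reversing}, so $p$ of a join cannot be a join. A one-line sanity check: if $p(a)=1$ then $p(a)\vee b=1$, hence $p(p(a)\vee b)=p(1)=0$, while $p(a)\vee p(b)=1$. The correct computation is
\[
p(p(a)\vee b)=\bigvee_{i}\nabla\bigl(\sim p(a)\wedge\sim t^{i}b\bigr)
=\sim p(a)\wedge\bigvee_{i}\nabla(\sim t^{i}b)=\sim p(a)\wedge p(b),
\]
using $t$-invariance of $p(a)$ and (T17) with the Boolean element $\sim p(a)$. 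With this in hand (ix) does follow, since
\[
(a\rightharpoondown b)\rightharpoondown(a\rightharpoondown c)=p(p(a)\vee b)\vee p(a)\vee c=\bigl(\sim p(a)\wedge p(b)\bigr)\vee p(a)\vee c=p(a)\vee p(b)\vee c=a\rightharpoondown(b\rightharpoondown c),
\]
by Boolean absorption ($p(a)\vee\sim p(a)=1$). This is precisely the identity the paper's long expansion arrives at. For (vi) the same correction gives $(a\rightharpoondown b)\rightharpoondown a=(\sim p(a)\wedge p(b))\vee a=a$, because $\sim p(a)=\bigwedge_{i}t^{i}\triangle a\leq\triangle a\leq a$; then (vi) is immediate from (i). So your plan is salvageable, but the ``delicate point'' you flagged is not a difficulty of proving an equality---it is that you wrote down the wrong equality.
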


\begin{dem} We only prove  (iii), (iv) y (ix):

\noindent (iii): If $s\leq k$, then $1=\nabla \sim t^s a \vee t^s(a)\leq \bigvee \limits_{i=1}^{k} \nabla \sim t^i a \vee t^s(a)=
a\rightharpoondown t^s(a)$. On the other hand, if $k\leq s$, then there are $q,r$ positive integers such that  $s= k \cdot q + r$ and $r < k$. Thus,  $t^s(a)=t^r(a)$ and by analogous reasoning  given before we have proved the property.

\noindent (vi): Let us prove first:\, ($\ast$)$\, \bigvee\limits_{n=1}^{k-1} t^n (\bigwedge \limits_{i=1}^{k} t^i x )=  \bigvee\limits_{n=1}^{k-1} \bigwedge \limits_{i=1}^{k} t^{n+i} x = \bigwedge \limits_{j=1}^{k} t^j x $. Indeed, it is enough to show $\{ t ^{n_0+i} x\}_{1\leq i\leq k} = \{ t ^j x\}_{1\leq j\leq k}$ for every positive integer  $n_0$ such that $1\leq n_0\leq k$.

Let $A= \{ t ^j x\}_{1\leq j\leq k}$ and $B=\{ t ^{n_0+i} x\}_{1\leq i\leq k}$. If $j$ is such that  $1\leq j\leq k$, then we have three possible cases:\, (a) $j>n_0$\, or (b) $j=n_0$\, or (c) $j<n_0$. 

We will see there is an positive integer``$i$'' such that $t^j x = t^{n_0+i} x$ for every case. Indeed, if (a) occurs,   we have that   $0<j-n_0<k$, then taking  $i=j-n_0$ the property is proved. If  (b) holds, taking  $i=k$ the property is easily verified . Finally, if (c) holds, then we have that  $0<n_0-j<k$ and so $0<k-(n_0-j)<k$. Hence, if we take  $i= k-(n_0-j)$, then we have that  $t^j x = t^{n_0+i} x$.  From the last assertions, we conclude that  $A\subseteq B$.  
 
 Conversely, since $0\leq i \leq k$ we have that  $n_0\leq i+n_0 \leq k+n_0$. Then, we have the following two cases. If (a) occurs we have  $n_0\leq i+n_0 \leq k$, then  $j= i+n_0$ as desired. If (b) occurs we have $k < i + n_0 \leq k + n_0$,  then $i + n_0= q\cdot k + r$ with $0\leq r< k-1$. Thus, $t^{i + n_0}x=t^{q\cdot k + r} x = t^r x$ and if we take   $j=r$, we have the other inclusion verified. Therefore,  $A=B$ as desired.

Now, let us show  that  $((a\rightharpoondown b) \rightharpoondown  a)= a$. First, let us observe that  ($\ast\ast$) $a\rightharpoondown b = \bigvee\limits_{j=1}^{k} \nabla\sim t^j a \vee b =  \nabla\sim (\bigwedge\limits_{j=1}^{k}t^j a) \vee b = \nabla\sim z \vee b$, where $z=\bigwedge\limits_{j=1}^{k}t^j a$. Thus,

\noindent $(a\rightharpoondown b) \rightharpoondown  a =( \nabla\sim z \vee b) \rightharpoondown  a)= \bigvee\limits_{n=1}^{k} \nabla\sim t^n ( \nabla\sim z \vee b) \vee  a = \nabla\sim ( \nabla\sim z \vee b) \vee \bigvee\limits_{n=1}^{k-1} \nabla\sim t^n ( \nabla\sim z \vee b) \vee  a = \nabla ( \sim\nabla\sim z \wedge \sim b) \vee \bigvee\limits_{n=1}^{k-1}  t^n \nabla(\sim \nabla\sim z  \wedge \sim b) \vee  a = \nabla ( \triangle z \wedge \sim b) \vee \bigvee\limits_{n=1}^{k-1}  t^n \nabla(\triangle z  \wedge \sim b) \vee  a =(T17)\,\,  ( \triangle z \wedge \nabla\sim b) \vee \bigvee\limits_{n=1}^{k-1}  t^n (\triangle z  \wedge \nabla\sim b) \vee a$. Then, replacing  $z$ we have:

\noindent $(a\rightharpoondown b) \rightharpoondown  a =( \triangle \bigwedge\limits_{j=1}^{k}t^j a \wedge \nabla\sim b) \vee \bigvee\limits_{n=1}^{k-1}  t^n (\triangle \bigwedge\limits_{j=1}^{k}t^j a  \wedge \nabla\sim b) \vee  a= (  \bigwedge\limits_{j=1}^{k}t^j \triangle a \wedge \nabla\sim b) \vee \bigvee\limits_{n=1}^{k-1}  t^n ( \bigwedge\limits_{j=1}^{k}t^j \triangle a  \wedge \nabla\sim b) \vee  a = ( \triangle a \wedge \bigwedge\limits_{j=1}^{k-1}t^j \triangle a \wedge \nabla\sim b) \vee \bigvee\limits_{n=1}^{k-1}  t^n ( \bigwedge\limits_{j=1}^{k}t^j \triangle a  \wedge \nabla\sim b) \vee  a = (( \triangle a \wedge \bigwedge\limits_{j=1}^{k-1}t^j \triangle a \wedge \nabla\sim b) \vee  a) \vee \bigvee\limits_{n=1}^{k-1}  t^n ( \bigwedge\limits_{j=1}^{k}t^j \triangle a  \wedge \nabla\sim b) =(T2) \,\, a \vee \bigvee\limits_{n=1}^{k-1}  t^n ( \bigwedge\limits_{j=1}^{k}t^j \triangle a  \wedge \nabla\sim b) = 
a \vee (\bigvee\limits_{n=1}^{k-1}  t^n( \bigwedge\limits_{j=1}^{k}t^j \triangle a)  \wedge  t^n  \nabla\sim b)$. 
From the latter, ($\ast\ast$) and (T2), we can infer that

\noindent $(a\rightharpoondown b) \rightharpoondown  a = a \vee (\bigwedge \limits_{i=1}^{k} t^i \triangle a  \wedge \bigvee\limits_{n=1}^{k-1}   t^n  \nabla\sim b) = a \vee ( \triangle a \wedge \bigwedge \limits_{i=1}^{k-1} t^i \triangle a  \wedge \bigvee\limits_{n=1}^{k-1}  t^n  \nabla\sim b) = a$. Form the last assertion and (i), we have proved the property.

\noindent (ix): First, let us observe that $( a\rightharpoondown b) \rightharpoondown (a\rightharpoondown c) =  \bigvee\limits_{i=1}^{k} \nabla \sim t^i (\bigvee\limits_{j=1}^{k} \nabla \sim t^j a \vee b) \vee (\bigvee\limits_{j=1}^{k} \nabla \sim t^j a \vee c) =  \bigvee\limits_{i=1}^{k} \nabla t^i (\bigwedge\limits_{j=1}^{k} \sim \nabla \sim t^j a \wedge \sim b) \vee (\bigvee\limits_{j=1}^{k} \nabla \sim t^j a \vee c)  =  \bigvee\limits_{i=1}^{k}  t^i (\bigwedge\limits_{j=1}^{k} \triangle t^j a \wedge \sim b) \vee (\bigvee\limits_{j=1}^{k} \nabla \sim t^j a \vee c)  = (T10)\,(T17) \,\,  \bigvee\limits_{i=1}^{k}  t^i \triangle (\bigwedge\limits_{j=1}^{k}  t^j a \wedge \nabla \sim b) \vee \bigvee\limits_{j=1}^{k} \nabla \sim t^j a \vee c =(T10)\,\, (\bigvee\limits_{i=1}^{k}  t^i \bigwedge\limits_{j=1}^{k} \triangle t^j a \wedge \bigvee\limits_{i=1}^{k}  t^i \nabla \sim b) \vee (\bigvee\limits_{j=1}^{k} \nabla \sim t^j a \vee c) = (\bigwedge\limits_{j=1}^{k}  t^j \triangle a \vee \bigvee\limits_{i=1}^{k-1}  t^i \bigwedge\limits_{j=1}^{k}  t^j \triangle a) \wedge \bigvee\limits_{i=1}^{k}  t^i \nabla \sim b) \vee (\bigvee\limits_{j=1}^{k} \nabla \sim t^j a \vee c) = (\ast \ast)\, ((\bigwedge\limits_{j=1}^{k}  t^j \triangle a \vee \bigwedge\limits_{j=1}^{k}  t^j \triangle a ) \wedge \bigvee\limits_{i=1}^{k}  t^i  \nabla \sim b) \vee \bigvee\limits_{j=1}^{k} \nabla \sim t^j a \vee c = (\bigwedge\limits_{j=1}^{k}  t^j \triangle a   \wedge \bigvee\limits_{i=1}^{k}  t^i  \nabla \sim b) \vee (\bigvee\limits_{j=1}^{k} \nabla \sim t^j a \vee c) = ((\bigwedge\limits_{j=1}^{k}  t^j \triangle a \vee \bigvee\limits_{j=1}^{k} \nabla \sim t^j a)  \wedge (\bigvee\limits_{i=1}^{k}  t^i  \nabla \sim b \vee \bigvee\limits_{j=1}^{k} \nabla \sim t^j a)) \vee c  =((\bigwedge\limits_{j=1}^{k}  t^j \triangle a \vee \bigvee\limits_{j=1}^{k}  t^j \sim \triangle a)  \wedge (\bigvee\limits_{i=1}^{k}  t^i  \nabla \sim b \vee \bigvee\limits_{j=1}^{k} \nabla \sim t^j a)) \vee c$. Since by (T4) we know that  $\triangle a$ is a Boolean element, then $(\bigwedge\limits_{j=1}^{k}  t^j \triangle a \vee \bigvee\limits_{j=1}^{k}  t^j \sim \triangle a)= 1$. Thus, we have:  

$( a\rightharpoondown b) \rightharpoondown (a\rightharpoondown c) = \bigvee\limits_{i=1}^{k}  t^i  \nabla \sim b \vee \bigvee\limits_{j=1}^{k} \nabla \sim t^j a \vee c =  \bigvee\limits_{j=1}^{k} \nabla (\sim t^j a) \vee\bigvee\limits_{i=1}^{k} \nabla (\sim t^i    b) \vee  c = a\rightharpoondown (b \rightharpoondown  c)$ which completes the proof.
\end{dem} 

\

In the following, we will consider the notion of deductive system for the implication $\rightharpoondown$ as usual: given a ${\cal C}_k$-algebra $A$ and $D\subset A$, we say that  $D$ is a cyclic deductive system if  (D$_c$1)\, $1\in D$ and if  (D$_c$2)\, $x,x\rightharpoondown y\in D$, then $y\in D$.

\begin{lem}\label{C2L3}
Let $(A,t)$ be a ${\cal C}_k$-algebra and $F\subseteq A$, then the following conditions are equivalent:

\begin{itemize}
  \item[\rm (i)] $F$ is a cyclic deductive system,
  \item[\rm (ii)] $F$ is a  $c$-filter.
\end{itemize} 
\end{lem}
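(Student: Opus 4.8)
The plan is to prove the two implications separately, relying on Remark~\ref{Ockc} (which identifies $c$-filters with the filters $F$ satisfying $t(\triangle F)=F$) and on the arithmetic of $\rightharpoondown$ collected in Lemma~\ref{C2L2}.

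First I would show (ii)$\Rightarrow$(i). Suppose $F$ is a $c$-filter. Then $1\in F$, so (D$_c$1) holds. For (D$_c$2), take $x\in F$ and $x\rightharpoondown y\in F$. By definition $x\rightharpoondown y=\bigvee_{i=1}^{k}\nabla(\sim t^i x)\vee y$. Since $F$ is a $c$-filter it is closed under $t$ and under $\triangle$; from $x\in F$ and closure under $t$ we get $t^i x\in F$ for each $i$, hence $\triangle t^i x\in F$, hence (as $F$ is an up-set) $\sim t^i x\le \nabla(\sim t^i x)$ is irrelevant — what I actually need is that the join $\bigvee_{i=1}^k\nabla(\sim t^i x)$ is \emph{disjoint} from $F$ in a suitable sense. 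The clean way: using (T13), $\sim t^i x\wedge\triangle t^i x=0$, i.e. $\triangle t^i x\le \sim\nabla(\sim t^i x)$ (after applying $\sim$ and De Morgan, $\sim\nabla\sim z=\triangle z$). So $\triangle t^i x$ and $\nabla(\sim t^i x)$ are complementary Boolean-type elements; since $\triangle t^i x\in F$ and $F$ is a proper-or-improper filter we cannot conclude $\nabla(\sim t^i x)\notin F$ in general, so instead I argue directly on the join: $x\wedge(x\rightharpoondown y)\le x\wedge\big(\bigvee_i\nabla(\sim t^i x)\vee y\big)$; distributing, $x\wedge\nabla(\sim t^i x)\le \nabla(x\wedge\sim t^i x)$? — not quite. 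The correct route is to intersect with $\bigwedge_{i=1}^k t^i\triangle x$, which lies in $F$ because $x\in F$ implies $\triangle x\in F$ implies $t^i\triangle x\in F$ for all $i$ (closure under $t$ and $\triangle$). Then $\big(\bigwedge_i t^i\triangle x\big)\wedge\big(\bigvee_j\nabla(\sim t^j x)\big)=0$ by (T13) applied to each $t^j x$, so $\big(\bigwedge_i t^i\triangle x\big)\wedge(x\rightharpoondown y)\le y$; since the left-hand side is a meet of two elements of $F$ it lies in $F$, and $F$ being an up-set gives $y\in F$.

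Next, (i)$\Rightarrow$(ii). Assume $F$ is a cyclic deductive system. I must show: $F$ is a filter (up-set closed under $\wedge$), $1\in F$, and $t(\triangle F)=F$. That $1\in F$ is (D$_c$1). For the up-set property: if $a\in F$ and $a\le b$, then by Lemma~\ref{C2L2}(vii) $a\rightharpoondown(b\rightharpoondown a)=1\in F$; that's not directly it — I want $a\rightharpoondown b=1$. But $a\le b$ gives, via Lemma~\ref{C2L2}(v) with $c=a$ and Lemma~\ref{C2L2}(i), $1=a\rightharpoondown a\le a\rightharpoondown b$, so $a\rightharpoondown b=1\in F$; with $a\in F$ and (D$_c$2), $b\in F$. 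For closure under $\wedge$: if $a,b\in F$, then since $b\in F$ is an up-set element and $a\wedge b\le b$ — wait, I need $a\wedge b\in F$ from $a,b\in F$. Use Lemma~\ref{C2L2}(iv): $a\rightharpoondown(a\wedge b)=a\rightharpoondown b$; since $a\le b$ is false in general I instead note $b\le b$ so $a\rightharpoondown b\ge\,?$. The standard Hilbert-style argument: from $b\in F$ and Lemma~\ref{C2L2}(vii) $b\rightharpoondown(a\rightharpoondown b)=1$, wait that gives $a\rightharpoondown b\in F$ only after modus ponens with $b\in F$; then (iv) gives $a\rightharpoondown(a\wedge b)\in F$, and modus ponens with $a\in F$ yields $a\wedge b\in F$. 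Closure under $\triangle$ and under $t$: from $a\in F$, Lemma~\ref{C2L2}(ii) gives $a\rightharpoondown\triangle a=1\in F$, so $\triangle a\in F$; Lemma~\ref{C2L2}(iii) gives $a\rightharpoondown t^s a=1\in F$ for every $s$, so $t^s a\in F$ — in particular $t a\in F$ and, applying this to $\triangle a$, $t(\triangle a)\in F$. Conversely, if $t(\triangle a)\in F$ then since $t$ is an automorphism of period $k$, $a=\triangle t^{k-1}t(\triangle a)\in F$ using closure under $t$ and $\triangle$ again together with $\triangle\triangle=\triangle$ (T7) — more carefully, $t^{k-1}(t(\triangle a))=\triangle a\le a$ by (T2), so $a\in F$ by the up-set property. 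Hence $t(\triangle F)=F$, and by Remark~\ref{Ockc} $F$ is a $c$-filter.

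\textbf{The main obstacle} is the bookkeeping in the direction (ii)$\Rightarrow$(i): making precise why $x$ together with $x\rightharpoondown y$ forces $y\in F$ when $\rightharpoondown$ carries all those $\nabla\sim t^i$ summands. The key device — which I expect to be the crux — is that $\bigwedge_{i=1}^k t^i\triangle x\in F$ (a Boolean element of $F$, available because $F$ is closed under $t$ and $\triangle$) annihilates the disjunction $\bigvee_{j=1}^k\nabla(\sim t^j x)$ by (T13), reducing $x\rightharpoondown y$ to $y$ after meeting with this element of $F$. Everything else is a routine application of parts (i)--(v), (vii) of Lemma~\ref{C2L2} and properties (T2), (T7), (T13), and once both inclusions are in place the equivalence with being a $c$-filter is immediate from Remark~\ref{Ockc}.
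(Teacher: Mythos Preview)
Your argument is essentially the paper's: for (i)$\Rightarrow$(ii) you use Lemma~\ref{C2L2}(i), (iv), (v), (vii), (ii), (iii) exactly as the paper does, and for (ii)$\Rightarrow$(i) you meet $x\rightharpoondown y$ with the Boolean element $\bigwedge_i t^i\triangle x\in F$ so that $\bigvee_j\nabla(\sim t^j x)$ collapses to $0$ --- the paper does the same thing after first applying $\triangle$ and (T18), reaching $\bigwedge_j\triangle t^j a\wedge\triangle b\le b$; your variant avoids (T18) and is marginally slicker.

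One point to clean up. The phrase ``$t(\triangle F)=F$'' in Remark~\ref{Ockc} is to be read as ``$F$ is a filter closed under $t$ and under $\triangle$'' (this is how $c$-filters are actually used throughout the paper, e.g.\ in Proposition~\ref{C2P1} and Lemma~\ref{C2L5}). Taken literally, the inclusion $F\subseteq t(\triangle F)$ is false in general, since every element of $t(\triangle F)$ is Boolean. Your ``Conversely'' paragraph does not establish that inclusion anyway: showing that $t(\triangle a)\in F$ implies $a\in F$ is the statement $(t\circ\triangle)^{-1}(F)\subseteq F$, not $F\subseteq t(\triangle F)$. Simply drop that paragraph --- closure under $t$ and under $\triangle$, which you have already obtained from Lemma~\ref{C2L2}(ii),(iii) via (D$_c$2), is exactly what is required.
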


\begin{dem}

\noindent (i)$\Rightarrow$ (ii): It is clear that $1\in F$. Now, let us suppose  $a,b\in F$, from  Lemma \ref{C2L2} (vii) and (iv), we have $1=b\rightharpoondown (a \rightharpoondown b)= b \rightharpoondown ( a\rightharpoondown (a\wedge b)) \in F$. Then, by (D$_c$2) we infer that  $a\wedge b \in F$. Now, let us suppose  $a\in F$ and $b\in A$ such that  $a\leq b$, then form  Lemma \ref{C2L2} (v) we have that $1= a \rightharpoondown  a \leq a\rightharpoondown  b$. From the latter,  Lemma \ref{C2L2} (viii)  and (D$_c$2), we obtain that  $b \in F$. Now, from Lemma \ref{C2L2} (ii) and (iii), we have the conditions of  $c$-filter are verified.

\noindent (ii)$\Rightarrow$ (i): Let $a,a \rightharpoondown b\in F$, then $\triangle t^j a\in F$ for every $1\leq j \leq k$.  Besides, $\triangle (a \rightharpoondown b)\in F$ and therefore $\bigwedge \limits_{j=1}^{k}\triangle t^j a \wedge \triangle (a \rightharpoondown b)\in F$. On the other hand, we have that  $\bigwedge \limits_{j=1}^{k}\triangle t^j a \wedge \triangle (a \rightharpoondown b) = \bigwedge \limits_{j=1}^{k}\triangle t^j a \wedge \triangle (\bigvee \limits_{i=1}^{k}  \nabla \sim t^i a \vee b) = (T18) \bigwedge \limits_{j=1}^{k}\triangle t^j a \wedge (\bigvee \limits_{i=1}^{k} \nabla \sim  t^i a \vee \triangle  b) =  (\bigwedge \limits_{j=1}^{k}\triangle t^j a \wedge \bigvee \limits_{i=1}^{k} \sim \triangle t^i a ) \vee (\bigwedge \limits_{j=1}^{k}\triangle t^j a \wedge \triangle  b)=H$. Since  (T4) we know that  $\triangle a$ is a Boolean element, then  $(\bigwedge \limits_{j=1}^{k}\triangle t^j a \wedge \bigvee \limits_{i=1}^{k} \sim \triangle t^i a )=0$. Thus,  $H= \bigwedge \limits_{j=1}^{k}\triangle t^j a \wedge \triangle  b \leq \triangle  b  \leq b  $. Hence, from the last assertions we conclude that  $b\in F$ as desired. 
\end{dem}

\

From Lemma \ref{C2L3} we can say  the congruences are characterized by the cyclic deductive systems. Now, according to  Lemma \ref{C2L2} and results given by A. Monteiro (\cite{AM}), we have proved the following Lemma.

\begin{lem}  
For every ${\cal C}_k$-algebra  $(A,t)$, the following conditions hold:
\begin{itemize}
\item[\rm (i)] every cyclic deductive system of  $A$ is intersection of maximal  cyclic deductive systems of $A$,
\item[\rm (ii)] the intersection of all maximal  cyclic deductive systems of  $A$ is equal to  $\{1\}$.

\item[\rm (iii)] The variety of  ${\cal C}_k$-algebras is semisimple.
\end{itemize}
\end{lem}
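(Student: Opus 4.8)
The plan is to prove the three parts in the order (ii), (i), (iii), using as leverage the dictionary supplied by Lemma \ref{C2L3} (cyclic deductive systems $=$ $c$-filters) and Remark \ref{Ockc} ($c$-filters $\cong Con(A)$ as complete lattices, with the trivial deductive system $\{1\}$ matched to the identity congruence $\Delta_A$ and maximal $c$-filters matched to the maximal proper congruences, i.e. to those $\theta$ with $A/\theta$ simple). The real work is (ii); then (i) follows by a quotient argument and (iii) is a translation. For (ii) I would read the statement off Theorem \ref{C2T1}; this is essentially the algebraic core of the argument the introduction attributes to A. Monteiro, whose abstract lemma \cite{AM} one may invoke instead, its hypotheses — reflexivity (i), the $K$ and $S$ laws (vii), (ix), the displayed identities (viii), (x), and, above all, the Peirce law (vi) — being exactly the items of Lemma \ref{C2L2}.

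For (ii): by Theorem \ref{C2T1} the maximal $c$-filters of any ${\cal C}_k$-algebra $A$ are precisely the sets $N_U=\bigcap_{i=0}^{k-1}t^iU\cap\varphi(t^iU)$, one for each ultrafilter $U$; in particular $N_U\subseteq\varphi(U)$. Now let $c\in A$ with $c\neq 1$. Since $\sim$ is an involution with $\sim 0=1$, we have $\sim c\neq 0$, so the principal filter $\{x:\sim c\leq x\}$ is proper and, by Zorn's Lemma, extends to an ultrafilter $U$ containing $\sim c$; then $c\in\,\sim U$, hence $c\notin\varphi(U)=A\setminus\sim U$, hence $c\notin N_U$. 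Therefore $\bigcap_U N_U=\{1\}$, which is (ii). (Alternatively: $\{1\}$ is a cyclic deductive system by Lemma \ref{C2L2}(viii), and Monteiro's lemma gives directly that it is the intersection of the maximal cyclic deductive systems.)

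For (i): let $F$ be a cyclic deductive system, $\theta_F=R(F)\in Con(A)$ the associated congruence and $q:A\to A/\theta_F$ the quotient map, so that $F$ is the $q$-preimage of the trivial deductive system $\{1\}$ of $A/\theta_F$. Applying (ii) to $A/\theta_F$ and pulling back along $q$ — using that, by the correspondence theorem, the maximal $c$-filters of $A/\theta_F$ are exactly the images of the maximal $c$-filters of $A$ that contain $F$ — gives $F=\bigcap\{M: M\text{ a maximal cyclic deductive system},\ F\subseteq M\}$, which is (i). For (iii): transporting (ii) through the lattice isomorphism of Remark \ref{Ockc} yields $\bigcap\{\theta\in Con(A):\theta\text{ maximal}\}=\Delta_A$, so the canonical map $A\to\prod_{\theta\text{ maximal}}A/\theta$ is injective; this exhibits $A$ as a subdirect product of simple ${\cal C}_k$-algebras. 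As $A$ was an arbitrary member of the variety, every subdirectly irreducible ${\cal C}_k$-algebra is simple, i.e. the variety of ${\cal C}_k$-algebras is semisimple.

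The main obstacle is (ii): one needs the maximal $c$-filters to separate $1$ from every other element. Along the route above this reduces to the observation that already the ``$\varphi(U)$'' half of each $N_U$ suffices, via $\sim c\neq 0$. Along the Monteiro route the same fact is the content of the Peirce law Lemma \ref{C2L2}(vi) — which fails, for instance, in an arbitrary Hilbert algebra, where $\{1\}$ need not be an intersection of maximal deductive systems — so that item of Lemma \ref{C2L2}, the most computational one there, is where the difficulty sits. Everything after (ii), namely the quotient argument for (i) and the passage through Remark \ref{Ockc} for (iii), is routine.
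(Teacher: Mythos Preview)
Your argument is correct, and you in fact sketch both routes. The paper's own proof is the one you list as the alternative: it simply invokes Monteiro's abstract result \cite{AM}, the hypotheses being precisely the items (i), (vi), (vii), (viii), (ix), (x) of Lemma~\ref{C2L2} that you identify. Your primary route is different and more concrete: for (ii) you bypass the Monteiro machinery and read the separation of $1$ directly off the ultrafilter description of maximal $c$-filters in Theorem~\ref{C2T1}, using only that $\sim$ is an involution so that $c\neq 1$ forces $\sim c\neq 0$; then (i) and (iii) follow by the standard quotient and correspondence arguments. The Monteiro approach explains \emph{why} the paper set up Lemma~\ref{C2L2} with exactly those identities (Peirce's law (vi) is the crux), and it would transfer verbatim to any algebra carrying an implication with the same profile; your ultrafilter approach is self-contained within the paper, needs no external reference, and makes visible that the prime-spectrum work of Section~2 already suffices for semisimplicity.
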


\section{ Generating algebras of the variety} 

In this section, we will determine the simple algebras of the class of ${\cal C}_k$-algebras. To this end, we first studied the maximal congruence for a given algebra. The notion of cyclic  deductive generated by a set $H$ is defined as usual and we denoted by $D(H)$. If $H=\{a\}$ we just write  $D(a)$.

\begin{lem}\label{C2L4}
Let $(A,t)$ be a ${\cal C}_k$-algebra, $a\in A$ and $H\subseteq A$, then:
\begin{itemize}
  \item[\rm (i)] $D(a)= [\bigwedge \limits_{j=1}^{k} t^j\triangle a )$, where $[X)$ is the $c$-filter generated by $X$,
  \item[\rm (ii)] $D(H,a)=D(H\cup \{a\})=\{x\in A: a \rightharpoondown x \in D(H)\}$,
  \item[\rm (iii)] $D(a)= \{x\in A: a \rightharpoondown x =1\}$.
\end{itemize}
\end{lem}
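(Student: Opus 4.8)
The plan is to establish the three items in order, using the basic properties of $\rightharpoondown$ from Lemma \ref{C2L2} together with the characterization of $c$-filters as cyclic deductive systems (Lemma \ref{C2L3}).

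For (i), I would argue by double inclusion that $D(a) = [\bigwedge_{j=1}^{k} t^j \triangle a)$. For the inclusion $\supseteq$: by Lemma \ref{C2L2}(ii) and (iii) we have $a \rightharpoondown \triangle a = 1$ and $a \rightharpoondown t^j a = 1$, and combining these with the closure of a cyclic deductive system under the operations of a $c$-filter (which, by Lemma \ref{C2L3}, $D(a)$ is), one gets $t^j \triangle a \in D(a)$ for each $j$, hence their meet is in $D(a)$, hence the $c$-filter it generates is contained in $D(a)$. Actually the cleaner route is: from (D$_c$2) and $a \in D(a)$ plus $a \rightharpoondown t^j\triangle a = 1 \in D(a)$ (the latter from Lemma \ref{C2L2}(ii),(iii) and (v)/(iv)) we get $t^j \triangle a \in D(a)$; since $D(a)$ is a $c$-filter it is closed under finite meets, so $\bigwedge_{j=1}^k t^j\triangle a \in D(a)$, whence $[\bigwedge_{j=1}^k t^j\triangle a) \subseteq D(a)$. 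For the reverse inclusion, I would check that $[\bigwedge_{j=1}^k t^j \triangle a)$ is itself a cyclic deductive system containing $a$; it is a $c$-filter by construction, so by Lemma \ref{C2L3} it is a cyclic deductive system, and it contains $a$ because $\bigwedge_{j=1}^k t^j\triangle a \leq \triangle a \leq a$ by (T2) and (T10)-type reasoning. Minimality of $D(a)$ then gives $D(a) \subseteq [\bigwedge_{j=1}^k t^j\triangle a)$.

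For (ii), set $G = \{x \in A : a \rightharpoondown x \in D(H)\}$. I would show $G$ is a cyclic deductive system containing $H \cup \{a\}$, and that it is the smallest such. That $1 \in G$ follows from Lemma \ref{C2L2}(x); that $G$ is closed under modus ponens for $\rightharpoondown$ uses the ``$K$'' and ``$S$'' axioms, i.e. Lemma \ref{C2L2}(vii) and (ix), in the standard Hilbert-style deduction-theorem argument. We have $a \in G$ by Lemma \ref{C2L2}(i), and $H \subseteq G$ because for $h \in H$, $a \rightharpoondown h \in D(H)$ follows from $h \in D(H)$ and Lemma \ref{C2L2}(vii) via (D$_c$2). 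Conversely any cyclic deductive system $F \supseteq H \cup \{a\}$ contains $G$: if $x \in G$ then $a \rightharpoondown x \in D(H) \subseteq F$, and $a \in F$, so $x \in F$ by (D$_c$2). Hence $G = D(H \cup \{a\}) = D(H,a)$. Then (iii) is the special case $H = \emptyset$, for which $D(\emptyset) = D(1) = \{1\}$ (using Lemma \ref{C2L2}(viii) to see $\{1\}$ is a deductive system and it is clearly the least one), so $D(a) = \{x : a \rightharpoondown x \in \{1\}\} = \{x : a \rightharpoondown x = 1\}$.

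The main obstacle I anticipate is the routine-but-delicate verification that $G$ in (ii) is closed under modus ponens: this is exactly the abstract Hilbert-style deduction theorem, and it requires that $\rightharpoondown$ behaves like an implication on the nose, i.e. that we genuinely have the (K) and (S) schemata of Lemma \ref{C2L2}(vii),(ix) available, plus enough of (i),(iv),(v) to push derivations through. Since all the needed schemata are already recorded in Lemma \ref{C2L2}, this is a known argument (it is the content of Monteiro's technique invoked just before Lemma \ref{C2L4}) and carries through without surprises; the only care needed is bookkeeping with the $t^i$'s inside $\rightharpoondown$, which the identity $(\ast\ast)$ from the proof of Lemma \ref{C2L2} already handles. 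Item (i) is then a matter of identifying the generated deductive system concretely, and (iii) is immediate.
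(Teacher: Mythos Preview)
Your proposal is correct and follows essentially the same route as the paper: item (ii) is the standard deduction-theorem argument using the (K) and (S) schemata from Lemma~\ref{C2L2}(vii),(ix), item (iii) is its special case $H=\emptyset$, and item (i) proceeds by double inclusion via the element $\bigwedge_{j=1}^{k} t^j\triangle a$. The only cosmetic difference is that for (i) the paper verifies directly that the principal \emph{lattice} filter generated by $\bigwedge_{j=1}^{k} t^j\triangle a$ is already closed under $\triangle$ and $t$ (hence is a $c$-filter), whereas you take this as given by the statement's definition of $[X)$.
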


\begin{dem}
(i): Since $\bigwedge \limits_{j=1}^{k} t^j\triangle a \leq \triangle a \leq a$, then  $a\in [\bigwedge \limits_{j=1}^{k} t^j\triangle a )$. Now, let us take   $x\in [\bigwedge \limits_{j=1}^{k} t^j\triangle a )$, then $\bigwedge \limits_{j=1}^{k} t^j\triangle a \leq x$ and  $\bigwedge \limits_{j=1}^{k} t^j\triangle \triangle  a \leq \triangle x$. Thus,  $\triangle x\in [\bigwedge \limits_{j=1}^{k} t^j\triangle a )$. Similarly, we can prove that   $tx\in [\bigwedge \limits_{j=1}^{k} t^j\triangle a )$. Then,  from Lemma \ref{C2L3} we have that  $[\bigwedge \limits_{j=1}^{k} t^j\triangle a )$ is a cyclic deductive system.  On the other hand, if  $W$ is a cyclic deductive system  and  $a\in W$, then it is simple to verify that  $[\bigwedge \limits_{j=1}^{k} t^j\triangle a )\subseteq W$ which completes the proof.

(ii): From Lemma \ref{C2L2} (i), we have that  $a\in D(H,a)$. Besides, it is clear that  $h\in D(H,a)$ for every  $h\in H$. Let us see that  $D(H,a)$ is a deductive system. Indeed, from Lemma \ref{C2L2} (x) we know that  $1\in D(H,a)$. Besides, let us consider $x, x\rightharpoondown y\in D(H,a)$, then we have that  $a\rightharpoondown x, a\rightharpoondown (x\rightharpoondown y)  \in D(H)$. Thus, from Lemma \ref{C2L2}  we infer that   $1= (a\rightharpoondown (x\rightharpoondown y)) \rightharpoondown ((a\rightharpoondown x) \rightharpoondown (a\rightharpoondown y)) \in D(H)$. From the latter and  (D$_c$2), we have  $a\rightharpoondown y \in D(H)$. 
On the other hand, if $B$ is a cyclic deductive system such that   $H\cup \{a\}\subseteq B$, then is not hard to see that   $D(H\cup \{a\})\subseteq B$ as desired.

(iii): It  immediately follows from  (ii) taking  $H = \emptyset$.
\end{dem}

\begin{lem} \label{C2L5}
Let $(A,t)$ be a ${\cal C}_k$-algebra and let  $D_1$ be a cyclic deductive system of  $ A$. Then,  $$D(D_1,a)=  \{x\in A:\, {\rm there\,\, exists } \,\, d\in D_1\, {\rm such \,\, that } \,\, d \wedge \bigwedge \limits_{j=1}^{k} t^j \triangle a\leq x \}.$$
\end{lem}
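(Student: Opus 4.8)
The plan is to establish the two inclusions for the set
$$S = \{x\in A:\, \text{there exists } d\in D_1 \text{ such that } d\wedge\textstyle\bigwedge_{j=1}^{k} t^j\triangle a\leq x\},$$
showing $D(D_1,a)=S$. For the inclusion $S\subseteq D(D_1,a)$, take $x\in S$ with witness $d\in D_1\subseteq D(D_1,a)$. By Lemma \ref{C2L4}(i) we have $\bigwedge_{j=1}^{k} t^j\triangle a \in D(a)\subseteq D(D_1,a)$, and since $a\in D(D_1,a)$ as well, the element $d\wedge\bigwedge_{j=1}^{k} t^j\triangle a$ lies in $D(D_1,a)$ (cyclic deductive systems are closed under $\wedge$, by Lemma \ref{C2L3} and the $c$-filter property). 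As $d\wedge\bigwedge_{j=1}^{k} t^j\triangle a\leq x$ and $D(D_1,a)$ is an up-set, we conclude $x\in D(D_1,a)$.

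For the reverse inclusion $D(D_1,a)\subseteq S$, the cleanest route is to verify directly that $S$ is itself a cyclic deductive system containing $D_1\cup\{a\}$; then minimality of $D(D_1,a)$ gives $D(D_1,a)\subseteq S$. That $D_1\subseteq S$ is immediate (take the witness $d\in D_1$ and use $d\wedge\bigwedge_{j=1}^k t^j\triangle a\leq d$... rather, one needs $x\in D_1$ to give a witness $d$ with $d\wedge\bigwedge_{j=1}^k t^j\triangle a\le x$; take $d=x$ and use $\bigwedge_{j=1}^k t^j\triangle a\le 1$, hence $x\wedge\bigwedge_{j=1}^k t^j\triangle a\le x$). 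That $a\in S$ follows by taking $d=1\in D_1$ and noting $\bigwedge_{j=1}^k t^j\triangle a\leq\triangle a\leq a$ by (T2). Clearly $1\in S$. The substantive point is closure under modus ponens for $\rightharpoondown$: suppose $x\in S$ and $x\rightharpoondown y\in S$, with witnesses $d_1,d_2\in D_1$ so that $d_1\wedge w\leq x$ and $d_2\wedge w\leq x\rightharpoondown y$, where $w=\bigwedge_{j=1}^k t^j\triangle a$. Put $d=d_1\wedge d_2\in D_1$. One wants $d\wedge w\leq y$, i.e. a witness for $y\in S$.

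Here is where the main work lies. Using ($\ast\ast$) from the proof of Lemma \ref{C2L2}, $x\rightharpoondown y = \nabla\!\sim\!x \vee y$ when $x=\triangle x$ is Boolean; more relevantly, since $w$ and $d$ are built from $\triangle$-images and $D_1$ is a $c$-filter (so $d\in D_1$ implies $\triangle t^j d\in D_1$), one can assume after meeting with $w$ that all the relevant elements are Boolean, and then $x\rightharpoondown y$ collapses, modulo $w$, to an ordinary relative pseudocomplement-style expression $\sim(d_1\wedge w)\vee y$ behaviour. Concretely: from $d_1\wedge w\leq x$ we get, after applying $\triangle$ and the Boolean laws (T4), (T8), (T10), that $d\wedge w\wedge\bigwedge_{i=1}^k\nabla\!\sim t^i x = 0$, hence $d\wedge w\wedge(x\rightharpoondown y)\leq y$; combining with $d\wedge w\leq x\rightharpoondown y$ (from $d_2\wedge w\leq x\rightharpoondown y$ and $d\le d_2$) yields $d\wedge w\leq y$, so $y\in S$. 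The only delicate step is this Boolean manipulation showing $d_1\wedge w\leq x$ forces $d\wedge w\wedge\nabla\!\sim t^i x=0$ for each $i$ — here one applies $\triangle$ to $d_1\wedge w\le x$, uses that $\triangle(d_1\wedge w)=d_1\wedge w$ (as $d_1,w\in\triangle A$ via (T11), (T12)), invokes (T3) and (T9)–(T10), and finally $\nabla\!\sim t^i x = t^i\nabla\!\sim x$ with $\triangle z\wedge\nabla\!\sim z = 0$ from (T13). Closure under $t$ and $\triangle$ of $S$ is then automatic since $t$ and $\triangle$ are monotone, $tw=w$, $\triangle w=w$, and $D_1$ is a $c$-filter; this finishes the verification that $S$ is a cyclic deductive system, and hence the proof.
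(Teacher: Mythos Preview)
Your overall strategy is sound and the inclusion $S\subseteq D(D_1,a)$ is fine. For the converse, however, you take an unnecessarily circuitous path: you verify that $S$ is closed under modus ponens for $\rightharpoondown$, which drags in a Boolean calculation with $\nabla\!\sim t^i x$. The paper's proof is much shorter because it invokes Lemma~\ref{C2L3} (cyclic deductive systems coincide with $c$-filters) and simply checks that $S$ is a $c$-filter: upward closure is immediate from the definition of $S$, closure under $\wedge$ follows since $D_1$ is a filter, and closure under $\triangle$ and $t$ are one-line monotonicity arguments using $\triangle w=w$, $tw=w$ together with $\triangle d,\, td\in D_1$. You actually sketch precisely this $c$-filter verification in your final sentence --- which, once observed, renders the entire modus ponens computation superfluous.

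There is also a genuine slip in your modus ponens argument: you assert ``$d_1\in\triangle A$ via (T11), (T12)'', but an arbitrary witness $d_1\in D_1$ need not lie in $\triangle A$, and meeting with $w$ does not force $\triangle(d_1\wedge w)=d_1\wedge w$. The repair is the one you allude to earlier but do not carry out: replace $d_1$ by $d_1':=\bigwedge_{j=1}^{k} t^j\triangle d_1$, which belongs to $D_1\cap\triangle A$ (since $D_1$ is a $c$-filter) and is $t$-invariant; then $\triangle(d_1'\wedge w)=d_1'\wedge w$ and $t^i(d_1'\wedge w)=d_1'\wedge w$ hold, and your Boolean manipulation goes through. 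With this fix your route is correct, but the paper's direct $c$-filter check avoids all of it.
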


\begin{dem}
Consider $B=\{x\in A:\, {\rm there\,\, is}\, \, d\in D_1\, {\rm such\,\, that} \, d \wedge \bigwedge \limits_{j=1}^{k} t^j \triangle a\leq x \}$. Then, it is clear that  $D_1\subseteq D(D_1,a)$ and that $a\in D(D_1,a)$. On the other hand, if  $x,y \in B$ and since  $D_1$ is filter then we conclude  $x\wedge y\in B$. 
Now let us see that $B$ is a $c$-filter. Indeed, let $z\in B$ then there is $w\in D_1$ such that $w\wedge \bigwedge \limits_{j=1}^{k} t^j \triangle a \leq z$. Thus, from  (T10) we have that  $\triangle (w\wedge \bigwedge \limits_{j=1}^{k} t^j \triangle a) = \triangle w\wedge \bigwedge \limits_{j=1}^{k} t^j \triangle a  \leq \triangle z$. From  Lemma \ref{C2L3}, $D_1$ is a $c$-filter we have that  $\triangle w \in D_1$ and then $\triangle z\in B$.  Analogously, it is possible to see that  $t(z)\in B$. 
On the other hand, let us suppose there exists a $c$-filter $Y$ such that $D_1\cup \{a\}\subseteq Y$. If  $x\in B$, then there exists  $d\in D_1$ such that   $d \wedge \bigwedge \limits_{j=1}^{k} t^j \triangle a\leq x$. Now, since $\bigwedge \limits_{j=1}^{k} t^j \triangle a\in Y$ and $Y$ us a filter we infer that  $x\in Y$, which concludes the proof. 
\end{dem}


\

Our next task is to determine the simple algebras of the variety of ${\cal C}_k$-algebra. First, recall that   $A/ \theta$ is  a simple algebra if only if  $\theta$ is a maximal congruence for a given ${\cal C}_k$-algebra $A$, \cite[p.59]{SB.HS}. Then, taking into account  Remark  \ref{Ockc}  and Lemma  \ref{C2L3}, the maximal congruences are determined by the cyclic deductive system.

\begin{theo}\label{carac}
Let $(A,t)$ be a ${\cal C}_k$-algebra and $M\subseteq A$ a cyclic deductive system. Then, the following conditions are equivalents:
\begin{itemize}
  \item[\rm (1)] $M$ is maximal,
  \item[\rm (2)] if $a\not\in M$, then there exists $m\in M$ such that  $\bigwedge \limits_{j=1}^{k} t^j(\triangle a) \wedge m  = 0$, 
  \item[\rm (3)] if $\bigwedge \limits_{j=1}^{k} t^j(\triangle a) \vee b\in M$, then  $a\in M$ or $b\in M$, 
  \item[\rm (4)] if $a\not\in M$, then $\sim \triangle \bigwedge \limits_{j=1}^{k} t^j( a ) \in M$,
  \item[\rm (5)] if $a,b\not\in M$, then  $a\rightharpoondown b, b\rightharpoondown a\in M$.
\end{itemize}
\end{theo}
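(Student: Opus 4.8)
The plan is to prove the cycle of implications $(1)\Rightarrow(2)\Rightarrow(3)\Rightarrow(4)\Rightarrow(5)\Rightarrow(1)$, throughout identifying cyclic deductive systems with $c$-filters via Lemma \ref{C2L3} and using freely the generation formulas of Lemma \ref{C2L4} and Lemma \ref{C2L5}. Write $\partial a := \bigwedge_{j=1}^{k} t^j(\triangle a)$ for brevity; note $\partial a$ is a Boolean element (by (T4), (T10) and the fact that $t$ is an automorphism), that $D(a)=[\partial a)$, and that $a\in M$ iff $\partial a\in M$ for any $c$-filter $M$ (since $\partial a\le\triangle a\le a$ forces one direction, and $a\in M$ forces $\triangle t^j a\in M$ for all $j$, hence $\partial a\in M$, exactly as in the proof of Lemma \ref{C2L3}(ii)).

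For $(1)\Rightarrow(2)$: if $a\notin M$ then $D(M,a)=A$ by maximality, so $0\in D(M,a)$; by Lemma \ref{C2L5} there is $m\in M$ with $m\wedge\partial a\le 0$, i.e. $m\wedge\partial a=0$. For $(2)\Rightarrow(3)$: suppose $\partial a\vee b\in M$ but $a\notin M$ and $b\notin M$. By (2) applied to $a$ there is $m\in M$ with $m\wedge\partial a=0$; similarly applied to $b$ (using $\partial b\le b$, so $b\notin M\Rightarrow\partial b\notin M$, and then (2) gives $m'\in M$ with $m'\wedge\partial b=0$ — actually it is cleaner to apply (2) directly to get $m'\wedge\partial b=0$ after noting $\partial b\notin M$). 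Then $m\wedge m'\wedge(\partial a\vee b)\le m\wedge m'\wedge(\partial a\vee\partial b)$... here I must be slightly careful: replace $b$ by $\partial b$ using that $\partial a\vee b\in M$ implies $\triangle(\partial a\vee b)=\partial a\vee\triangle b\in M$ (by (T18) and $\partial a\in\triangle A$), hence $\partial a\vee\partial b\in M$ after one more application of $\partial$; then $0 = (m\wedge m')\wedge(\partial a\vee\partial b)\in M$, contradicting that $M$ is proper. For $(3)\Rightarrow(4)$: since $\partial a$ is Boolean, $\partial a\vee\sim\partial a=1\in M$; by (3) either $a\in M$ or $\sim\partial a\in M$; but $\sim\partial a = \sim\triangle\bigwedge_{j=1}^k t^j a$ after commuting $\triangle$ with $t$ and with $\wedge$ (T10) — so if $a\notin M$ then $\sim\triangle\bigwedge_{j=1}^k t^j(a)\in M$.

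For $(4)\Rightarrow(5)$: if $a\notin M$, then by (4) $\sim\partial a\in M$; since $\sim\partial a\le\nabla\sim t^i a$ for each $i$ (because $\partial a\le\triangle t^i a$... more precisely $\sim\partial a = \bigvee_i\sim\triangle t^i a = \bigvee_i t^i\sim\triangle a$, and each disjunct is $\le\nabla\sim t^i a$ by (T2)-type facts), we get $\bigvee_{i=1}^k\nabla\sim t^i a\in M$, hence $b\rightharpoondown a = \bigvee_i\nabla\sim t^i b\vee a\ge$ ... wait: I need $a\rightharpoondown b = \bigvee_i\nabla\sim t^i a\vee b\in M$, which follows since the first disjunct is already in $M$. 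Symmetrically $b\notin M$ gives $b\rightharpoondown a\in M$. Finally $(5)\Rightarrow(1)$: if $M\subsetneq N$ with $N$ a cyclic deductive system, pick $a\in N\setminus M$; for any $b\notin M$, (5) gives $a\rightharpoondown b\in M\subseteq N$, and $a\in N$, so by (D$_c$2) $b\in N$; thus $N\supseteq A$, i.e. $M$ is maximal.

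The main obstacle I anticipate is the bookkeeping in $(2)\Rightarrow(3)$ and $(4)\Rightarrow(5)$: one must repeatedly pass between an element $x$ and $\partial x$, justify each such passage by the identities (T10), (T17), (T18) together with the Booleanness of $\triangle$-elements, and keep track of which disjuncts of $a\rightharpoondown b$ already lie in $M$. Everything else is a routine application of Lemmas \ref{C2L2}--\ref{C2L5} and the absorption/distribution laws, with the key conceptual input being simply that $\partial a$ is a Boolean complement of $\sim\partial a$ inside the Boolean skeleton $\triangle A$, so that $M$ behaves on these elements like a maximal filter of a Boolean algebra.
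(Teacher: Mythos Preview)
Your cycle $(1)\Rightarrow(2)\Rightarrow(3)\Rightarrow(4)\Rightarrow(5)\Rightarrow(1)$ is exactly the paper's, and each step is correct. The only noteworthy divergence is in $(2)\Rightarrow(3)$: you assume both $a\notin M$ and $b\notin M$ and work towards a contradiction, which forces you to also apply $(2)$ to $b$, then laboriously push $\partial a\vee b\in M$ down to $\partial a\vee\partial b\in M$ via (T15) and $t$-invariance of $\partial a$. The paper's argument is a one-liner: assuming only $a\notin M$, take $m\in M$ with $m\wedge\partial a=0$ and distribute to get $(\partial a\vee b)\wedge m=(\partial a\wedge m)\vee(b\wedge m)=b\wedge m\in M$, hence $b\in M$. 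This bypasses entirely the ``bookkeeping obstacle'' you flagged; no passage from $b$ to $\partial b$ is needed at all.
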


\begin{dem}

\noindent (1)\, $\Rightarrow$\, (2): Let $a\in A$ such that  $a\not\in M$ and consider $D=D(M,a)$. If $\bigwedge \limits_{j=1}^{k} t^j(\triangle a) \wedge m  \not= 0$ holds for every  $m\in M$, then from  Lemma \ref{C2L5} we have that  $D\not= A$ and $M\subset D\subset A$, which is impossible.

\noindent (2)\,$\Rightarrow$\, (3): Let $b\in A$ such that $\bigwedge \limits_{j=1}^{k} t^j(\triangle a)\vee b \in M$ and suppose that  $a\not\in M$. Then, by  (2) there exists $m\in M$ such that   $\bigwedge \limits_{j=1}^{k} t^j(\triangle a) \wedge m  = 0$. Thus,  $(\bigwedge \limits_{j=1}^{k} t^j(\triangle a)\vee b)\wedge m = (\bigwedge \limits_{j=1}^{k} t^j(\triangle a )\wedge m)\vee (b\wedge m)=b\wedge m \in M$ and therefore $b\in M$.

\noindent  (3)\,$\Rightarrow$\, (4): By hypothesis  $a\not\in M$ and since   $\triangle \bigwedge \limits_{j=1}^{k} t^j( a ) \leq a$, we have  $\triangle \bigwedge \limits_{j=1}^{k} t^j( a )\not\in M$. On the other hand,  $\sim \triangle \bigwedge \limits_{j=1}^{k} t^j( a ) \vee \triangle \bigwedge \limits_{j=1}^{k} t^j( a )  = 1\in M$. Hence, from the last assertions and  (3) we can infer $\sim \triangle \bigwedge \limits_{j=1}^{k} t^j( a )\in M$, which completes the proof.

\noindent (4)\,$\Rightarrow$\, (5): Let $a, b\in A$ such that $a\not\in M$ and $b \not\in M$. Thus, from (4)  we have that $\sim \triangle \bigwedge \limits_{j=1}^{k} t^j( a ) \in M$. Since 
$\sim \triangle \bigwedge \limits_{j=1}^{k} t^j( a )= \sim \bigwedge \limits_{j=1}^{k}   \triangle  t^j( a ) = \bigvee \limits_{j=1}^{k} \sim  \triangle  t^j( a ) = \bigvee \limits_{j=1}^{k}  \nabla \sim  t^j( a ) \leq a\rightharpoondown b$, then  $a\rightharpoondown b \in M$. Analogously, it is possible to see that  $b\rightharpoondown a\in M$.

\noindent (5)\,$\Rightarrow$\,(1): Let us suppose that  $M$ is not  maximal, then there is cyclic deductive system $M^\prime$ such that $M\subset M^\prime$. Now, let $a\in M^\prime\backslash M$ and $b\in A\backslash M^\prime$, thus by  (5) we have  $a\rightharpoondown b \in M^\prime$ but from the definition of deductive system we obtain  $b \in M^\prime$, which is a contradiction. 
\end{dem}

\

In the following we will consider  the set  $K(A)=\{x\in A: tx=x=\nabla x\}$. Now, let us present the following result essential in the rest of the section.

\begin{prop}\label{C3Prop}
Let $(A,t)$ be a ${\cal C}_k$-algebra and $a\in A$. Then, the following conditions hold:
\begin{itemize}
  \item[\rm (a)] $K(A)$ is an $Mpm$-subalgebra de $A$,
  \item[\rm (b)]  $K(A)$ is a Boolean algebra, 
  \item[\rm (c)] If $(S,t_s)$ is a  ${\cal C}_k$-subalgebra of $(A,t)$, then   $K(S) = K(A)\cap S$,
  \item[\rm (d)] $[a)$ is a $c$-filter if only if  $a\in K(A)$,
  \item[\rm (e)] if $a\in K(A)$, then $D(a)=[a)$.
\end{itemize}
\end{prop}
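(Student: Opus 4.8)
The plan is to establish the five items essentially in the order listed, relying on the facts from Lemma on properties (T1)--(T18), on (T11) which identifies $\triangle A$ with the set of fixed points of $\triangle$, and on the characterization of $c$-filters as cyclic deductive systems (Lemma~\ref{C2L3}).

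For (a), I would first observe that $K(A) = \{x : tx = x\} \cap \triangle A$, since $x = \nabla x$ is equivalent to $x = \triangle x$ by (T11), i.e.\ $x \in \triangle A$. The set of $t$-fixed points is closed under $\wedge,\vee,\sim,{}^\ast$ because $t$ is an automorphism of the $mpM$-reduct, and it is closed under $\triangle$ and $\nabla$ because $t$ commutes with these derived operations ($\triangle,\nabla$ being defined from $\sim$ and ${}^\ast$, which $t$ preserves); it contains $0$ and $1$ since $t0=0$, $t1=1$. Intersecting with $\triangle A$, which by (T12) is an $S$-subalgebra of $A$, and noting $0,1 \in \triangle A$ by (T1), gives that $K(A)$ is closed under all operations; hence it is an $mpM$-subalgebra. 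For (b), every $a \in K(A)$ satisfies $a = \triangle a$, and by (T4) $\triangle a$ is a Boolean element of $A$; so $a$ has a complement in $A$, and one checks the complement $\sim\triangle a = (\triangle a)^\ast = a^\ast$ (using (T8)) again lies in $K(A)$ by part (a). Thus $K(A)$ is a sublattice in which every element is complemented, i.e.\ a Boolean algebra, and here $a^\ast$ is the Boolean complement. For (c), the inclusion $K(S) \subseteq K(A) \cap S$ is immediate; conversely if $x \in S$ and $tx = x = \nabla x$ in $A$, the same equalities hold in $S$ since $S$ is a subalgebra (the operations are restrictions), so $x \in K(S)$.

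For (d), if $a \in K(A)$ then $a = \triangle a = ta$, so $\triangle[a) = [a)$ and $t[a) = [a)$ trivially (the generator is fixed by $\triangle$ and by $t$), whence $[a)$ is a $c$-filter. Conversely, suppose $[a)$ is a $c$-filter. Then $\triangle a \in [a)$ forces $a \le \triangle a$, and combined with (T2) ($\triangle a \le a$) we get $a = \triangle a$, so $a \in \triangle A$. Also $t a \in t(\triangle[a)) = [a)$ — more directly, $c$-filter means $t(\triangle[a)) = [a)$, and since $a = \triangle a$ this says $t[a) = [a)$, so $ta \in [a)$, i.e.\ $a \le ta$; applying this iteratively, $a \le ta \le t^2 a \le \cdots \le t^k a = a$, so all inequalities are equalities and $ta = a$. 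Hence $a \in K(A)$. Finally (e): if $a \in K(A)$ then $a = \triangle a$ and $t^j \triangle a = t^j a = a$ for all $j$, so $\bigwedge_{j=1}^k t^j \triangle a = a$, and Lemma~\ref{C2L4}(i) gives $D(a) = [\bigwedge_{j=1}^k t^j\triangle a) = [a)$.

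The only genuinely delicate point is the ``$ta = a$'' half of (d): I must unwind the definition of $c$-filter ($t(\triangle F) = F$) correctly at a principal filter. The clean way is the telescoping chain $a \le ta \le \cdots \le t^k a = a$ noted above, which collapses to equality; everything else is a routine verification that $t$ commutes with the derived operators and that $\triangle A$ and the $t$-fixed points are both subalgebras, facts already packaged in (T1), (T4), (T8), (T11), (T12).
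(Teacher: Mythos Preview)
Your proof is correct and follows essentially the route the paper takes (which defers (a), (b), (d) to the analogous results in \cite{AFO1} and treats (c), (e) directly); your argument for (e) via the explicit formula $D(a)=[\bigwedge_j t^j\triangle a)$ from Lemma~\ref{C2L4}(i) is in fact slightly more direct than the paper's minimality argument. One cosmetic point: in the forward direction of (d) you should write $\triangle[a)\subseteq [a)$ rather than equality (the image $\triangle[a)$ consists only of Boolean elements), but this inclusion is exactly the closure condition you need, so the argument stands.
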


\begin{dem}
The proof of (a) and (b) is analogous to the one of  \cite[Proposition 5.1]{AFO1}. Besides,  (c) immediately follows from the very definitions. Moreover,  (d) has an analogous proof of  \cite[Proposition 5.2]{AFO1}. Finally,  if $a\in K(A)$, then by  (d) we have that  $[a)$ is a cyclic deductive system. Besides, if $D_1$ is a cyclic deductive system such that  $a\in D_1$, then it is immediately that 
$\bigwedge \limits_{j=1}^{k} t^j\triangle \in D_1$. Therefore, from  Lemma \ref{C2L4} we conclude that  $D(a)\subseteq D_1$ which completes the proof. 
\end{dem}

\begin{theo}\label{cap3simple}
Let $(A,t)$ be a ${\cal C}_k$-algebra, then the following conditions are equivalent:
\begin{itemize}
  \item[\rm (i)] $(A,t)$ is  simple algebra,
  \item[\rm (ii)] for every  $a\in A$, $a\not=1$, $\bigwedge \limits_{j=1}^{k} t^j(\triangle a) = 0$, 
  \item[\rm (iii)]  $K(A)=\{0,1\}$.
\end{itemize}
\end{theo}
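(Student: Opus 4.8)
The plan is to establish the cycle of implications (i) $\Rightarrow$ (ii) $\Rightarrow$ (iii) $\Rightarrow$ (i), leaning on the characterization of maximal cyclic deductive systems in Theorem \ref{carac}, on Lemma \ref{C2L4}, and on Proposition \ref{C3Prop}. Recall that $(A,t)$ is simple exactly when the only cyclic deductive systems are $\{1\}$ and $A$; equivalently, $\{1\}$ is a maximal cyclic deductive system.

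\medskip
\noindent\textbf{(i) $\Rightarrow$ (ii).} Assume $(A,t)$ is simple, so $M=\{1\}$ is maximal. Apply condition (2) of Theorem \ref{carac}: for every $a\notin M$, i.e.\ every $a\neq 1$, there is $m\in M=\{1\}$ with $\bigwedge_{j=1}^{k} t^j(\triangle a)\wedge m = 0$. Since the only such $m$ is $1$, this forces $\bigwedge_{j=1}^{k} t^j(\triangle a) = 0$, which is exactly (ii).

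\medskip
\noindent\textbf{(ii) $\Rightarrow$ (iii).} We always have $\{0,1\}\subseteq K(A)$ by (T1) together with $t0=0$, $t1=1$. For the reverse inclusion, take $a\in K(A)$ with $a\neq 1$; I must derive a contradiction unless $a=0$. Since $a\in K(A)$ means $ta=a=\nabla a$, we get $t^j a = a$ for all $j$ and $\triangle a = \nabla a = a$ (using (T11), noting $a=\nabla a$ implies $a=\triangle a$), hence $\bigwedge_{j=1}^{k} t^j(\triangle a) = a$. By (ii) this equals $0$, so $a=0$. Therefore $K(A)=\{0,1\}$.

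\medskip
\noindent\textbf{(iii) $\Rightarrow$ (i).} Suppose $K(A)=\{0,1\}$ and let $D$ be a cyclic deductive system with $D\neq\{1\}$; I want $D=A$. Pick $a\in D$ with $a\neq 1$. By Lemma \ref{C2L4}(i), $D(a)=[\bigwedge_{j=1}^{k} t^j\triangle a)$ and $D(a)\subseteq D$. Now set $c=\bigwedge_{j=1}^{k} t^j\triangle a$. One checks $c\in K(A)$: it is fixed by $t$ (since applying $t$ permutes the factors $\{t^j\triangle a\}$, using $t^k=\mathrm{id}$, exactly as in the identity $(\ast\ast)$ of Lemma \ref{C2L2}) and it equals its own $\nabla$ because each $t^j\triangle a$ is a Boolean element by (T4) and $K(A)$ is a Boolean subalgebra by Proposition \ref{C3Prop}(a),(b). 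Hence $c\in K(A)=\{0,1\}$. If $c=1$ then $\bigwedge_{j=1}^{k} t^j\triangle a = 1$, forcing $\triangle a = 1$, hence $a\geq \triangle a = 1$ by (T2), contradicting $a\neq 1$. Therefore $c=0$, so $0\in D(a)\subseteq D$ and $D=A$. Thus $\{1\}$ is maximal and $(A,t)$ is simple.

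\medskip
\noindent\textbf{Main obstacle.} The only genuinely non-routine point is verifying that $c=\bigwedge_{j=1}^{k} t^j\triangle a$ lies in $K(A)$ — that is, $t$-invariance of $c$, which rests on the combinatorial fact (already proven as $(\ast)$/$(\ast\ast)$ inside Lemma \ref{C2L2}) that $t$ cyclically permutes the index set $\{1,\dots,k\}$ of the meet, plus the Booleanness of $\triangle a$ from (T4); everything else is a direct appeal to Theorem \ref{carac}, (T1)--(T2), (T11), and Proposition \ref{C3Prop}.
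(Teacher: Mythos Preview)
Your implications (i)$\Rightarrow$(ii) and (ii)$\Rightarrow$(iii) are exactly the paper's: the first applies Theorem~\ref{carac}(2) to the maximal system $M=\{1\}$, and the second evaluates $\bigwedge_j t^j(\triangle z)$ at $z\in K(A)$.

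Where you differ is in closing the cycle. The paper does not prove (iii)$\Rightarrow$(i) here; it simply invokes the analogue of \cite[Theorem~5.3]{AFO1} for (i)$\Leftrightarrow$(iii). You instead give a self-contained argument: for $a\neq 1$ the element $c=\bigwedge_{j=1}^{k} t^j\triangle a$ lies in $K(A)=\{0,1\}$, cannot be $1$ (since $c\le\triangle a\le a$), so $c=0$ and $D(a)=[c)=A$. This is a genuine advantage over the paper's external citation, and the $t$-invariance of $c$ is indeed the identity $(\ast)$ from Lemma~\ref{C2L2}.

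One correction, however: your justification that $\nabla c=c$ is miscited. ``Boolean'' (complemented) is \emph{not} the same as ``$\nabla$-fixed'' in $mpM$-algebras --- in $T_4$ the element $a$ is complemented yet $\nabla a=1$ --- so (T4) alone does not give what you need, and the appeal to Proposition~\ref{C3Prop}(a),(b) is circular (you are trying to place $c$ in $K(A)$, not use that $K(A)$ is Boolean). The clean fix is either: (a) note that $t^j\triangle a\in\triangle A$ by (T7), that $\triangle A$ is closed under $\wedge$ by (T10), hence $c\in\triangle A$ and $\nabla c=c$ by (T11); or, more directly, (b) observe that $D(a)=[c)$ is a $c$-filter by Lemmas~\ref{C2L4}(i) and \ref{C2L3}, so Proposition~\ref{C3Prop}(d) immediately gives $c\in K(A)$.
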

\begin{dem}

\noindent (i) $\Leftrightarrow$ (iii): It is analogous to the one of  \cite[Theorem 5.3]{AFO1}.

\noindent (i) $\Rightarrow$ (ii): Since $(A,t)$ is simple algebra, then $\{1\}$ is a maximal cyclic deductive system. From the last assertions and  Theorem \ref{carac} (2), we infer that  $\bigwedge \limits_{j=1}^{k} t^j(\triangle a) = 0$.

\noindent (ii) $\Rightarrow$ (iii): Let $z\in K(A)$ and $z\not=1$. Since $\triangle z = z = t^jz$ we have that   $t^j\triangle z= t^jz$  and therefore  $\bigwedge \limits_{j=1}^{k} t^j(\triangle z) = z= 0$ as desired.
\end{dem}

\

\begin{definition} For a given ${\cal C}_k$-algebra $(A,t)$, we say that $(A,t)$ is $r$-periodic if  $r$ is the smallest non-negative  element such that  $t^r (x)=x$ for every  $x\in A$.
\end{definition}

Let $T_2$, $T_3$, and  $T_4$ be the simple algebras of the variety of  $Mpm$-\'algebras. In the following, we will consider the sets $T_{2,k}$, $T_{3,k}$, and $T_{4,k}$ of all sequences $x=(x_1,\cdots, x_k)$ with $x_i\in T_{s,k}$ ($s=2,3,4$) and with the pointwise defined operations. Indeed, these algebras are also ${\cal C}_k$-algebras. Taking the function $t:T_{i,k}\to T_{i,k}$ defined as follows  $t(x_1,x_2,\cdots, x_k)=(x_k,x_1,x_2,\cdots, x_{k-1})$ where $(x_1,x_2,\cdots, x_k)\in T_{i,k}$ with $i=2,3,4$. It is not hard to see that $(T_{2,k},t)$, $(T_{3,k},t)$ and  $(T_{4,k},t)$ are $k$-periodic ${\cal C}_k$-algebras ;and, in the next, we will show they are simple algebras.

\begin{coro}\label{azg}
Every  ${\cal C}_k$-algebras $(T_{i,k},t)$ with $i=2,3,4$ is simple one.
\end{coro}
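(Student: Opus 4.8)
The plan is to apply Theorem \ref{cap3simple}, so it suffices to show that $K(T_{i,k}) = \{0,1\}$ for $i = 2, 3, 4$, where here $0 = (0, \dots, 0)$ and $1 = (1, \dots, 1)$ are the bottom and top of the product algebra $T_{i,k}$. First I would recall that, by Proposition \ref{C3Prop}(c) applied inside the product, an element $x = (x_1, \dots, x_k) \in T_{i,k}$ lies in $K(T_{i,k})$ precisely when $t(x) = x$ and $\nabla x = x$ (the latter computed pointwise using the modal operator of $T_i$). Unwinding the cyclic shift $t(x_1, \dots, x_k) = (x_k, x_1, \dots, x_{k-1})$, the equation $t(x) = x$ forces $x_1 = x_2 = \cdots = x_k$; that is, $x$ is a constant sequence $(a, a, \dots, a)$ with $a \in T_i$.

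Next I would impose the condition $\nabla x = x$. Pointwise this says $\nabla a = a$ in $T_i$, i.e. $a \in \triangle T_i = K(T_i)$ (using (T11) and the identification of $K$ with the $\triangle$-fixed points in a single $mpM$-algebra viewed trivially as a ${\cal C}_k$-algebra, or simply by direct inspection of the three simple $mpM$-algebras). In each of $T_2$, $T_3$, $T_4$ the only elements fixed by $\nabla$ are $0$ and $1$: in $T_2$ this is immediate; in $T_3$ one has $\nabla a = 1 \neq a$ for the middle element $a$; in $T_4$ one has $\nabla a = \nabla b = 1$ for the two incomparable middle elements, so again only $0,1$ survive. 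Hence $a \in \{0,1\}$, giving $x \in \{(0,\dots,0),(1,\dots,1)\}$, so $K(T_{i,k}) = \{0,1\}$. Then Theorem \ref{cap3simple}, equivalence (i) $\Leftrightarrow$ (iii), yields that each $(T_{i,k}, t)$ is simple.

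I do not expect a serious obstacle here; the only point requiring a little care is making sure the computation of $\nabla$ and $t$ in the product algebra is genuinely coordinatewise and that the $c$-filter/congruence machinery of Theorem \ref{cap3simple} applies to these concrete algebras — but that is guaranteed since they were already observed in the text to be $k$-periodic ${\cal C}_k$-algebras. One could alternatively argue via criterion (ii) of Theorem \ref{cap3simple}: for a non-top constant sequence $(a,\dots,a)$ with $a \neq 1$, compute $\bigwedge_{j=1}^{k} t^j(\triangle(a,\dots,a)) = (\triangle a, \dots, \triangle a)$, and note $\triangle a = 0$ whenever $a \neq 1$ in each $T_i$ (since $\triangle a \leq a$ is Boolean and equals $0$ for every non-unit), while for a non-constant $x \neq 1$ the meet over the $k$ cyclic shifts already collapses some coordinate to a value $\leq$ a proper element, forcing $\triangle$ of it to $0$ in that coordinate; chasing this through all coordinates gives the zero sequence. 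Either route closes the corollary.
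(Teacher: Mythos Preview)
Your proof is correct and follows essentially the same approach as the paper: both establish simplicity via Theorem \ref{cap3simple} by showing $K(T_{i,k})=\{0,1\}$, the only difference being the order in which the two defining conditions are exploited (you first use $t(x)=x$ to reduce to constant sequences and then apply $\nabla$ in the factor algebra, whereas the paper first rules out non-Boolean coordinates via $\nabla$ and then non-constant Boolean tuples via $t$). One minor slip: your reference to Proposition \ref{C3Prop}(c) is unnecessary---what you actually use is simply the definition of $K(A)$.
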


\begin{dem} In the first place consider $(T_{4,k},t)$. By construction, we have that  $0,1\in K(T_{4,k})$. Let us show that they are the unique elements of  $K(T_{4,k})$. Indeed, let $z\in K(T_{4,k})$ such that  $z\not= 0$ and $z\not= 1$, then  $z=(x_1,x_2,\cdots, x_k)$. Thus, we have the following three cases: (I) $x_i\in \{a,b\}$ for some $i$, $1\leq i\leq k$ or  (II) $x_i\in \{0,1\}$ for every $i$, $1\leq i\leq k$. If we have (I), since  $\nabla a= \nabla b= 1$, we infer that  $\nabla z \not = z$, which is a contradiction. If  (II) holds, from the hypothesis there exists $l$ such that  $x_l=0$ and $x_{l+1}=1$ or $x_l=1$ and $x_{l+1}=0$. In any case, from definition of  $t$ we have that  $tz\not=z$, which contradicts that $z\in K(T_{4,k})$. On the other hand, from Theorem  \ref{cap3simple} we have that  $(T_{4,k},t)$ is a simple ${\cal C}_k$-algebra. Analogously, it is possible to see that $(T_{2,k},t)$ and $(T_{3,k},t)$ are simple ${\cal C}_k$-algebras.
\end{dem}

\begin{coro}
The subalgebras $(T_{i,k},t)$ with $i=3,4$ are simple algebras. 
\end{coro}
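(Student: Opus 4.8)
The statement to be proven is the last corollary: that the subalgebras $(T_{i,k},t)$ with $i=3,4$ are simple algebras. The phrasing "subalgebras" signals that the point here is different from Corollary~\ref{azg}: we should view $T_{3,k}$ (resp.\ $T_{4,k}$) not merely as a direct power but as the $\mathcal{C}_k$-subalgebra of some ambient algebra, and confirm that simplicity is inherited. Concretely, the natural reading is that $T_{3,k}$ sits inside $T_{4,k}$ as a subalgebra (since, as recalled in the preliminaries, the $3$-chain $mpM$-algebra $T_3$ is \emph{not} a subalgebra of $T_4$ at the level of the single algebras, but the corollary asserts the cyclic versions still behave well as subalgebras), and that each of $(T_{3,k},t)$, $(T_{4,k},t)$ is a $\mathcal{C}_k$-subalgebra of a larger product. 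So the plan is to deduce simplicity directly from the characterization in Theorem~\ref{cap3simple}, using the compatibility of $K(\cdot)$ with subalgebras given in Proposition~\ref{C3Prop}(c).

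**Key steps.** First I would recall Theorem~\ref{cap3simple}: a $\mathcal{C}_k$-algebra $(A,t)$ is simple iff $K(A)=\{0,1\}$. Second, I would invoke Proposition~\ref{C3Prop}(c): if $(S,t_S)$ is a $\mathcal{C}_k$-subalgebra of $(A,t)$, then $K(S)=K(A)\cap S$. Third, combine these: take $A$ to be the relevant ambient $\mathcal{C}_k$-algebra for which $K(A)=\{0,1\}$ has already been established — for instance $A=(T_{4,k},t)$, shown simple in Corollary~\ref{azg}, so $K(T_{4,k})=\{0,1\}$. Then for any $\mathcal{C}_k$-subalgebra $(S,t_S)$ of $(T_{4,k},t)$ we get $K(S)=K(T_{4,k})\cap S=\{0,1\}\cap S=\{0,1\}$, since $0,1\in S$ (subalgebras contain the constants). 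By Theorem~\ref{cap3simple} again, $(S,t_S)$ is simple. Applying this with $S=(T_{3,k},t)$ (which is indeed a $\mathcal{C}_k$-subalgebra of $(T_{4,k},t)$, the embedding being the pointwise extension of the inclusion $T_3\hookrightarrow T_4$ as De Morgan algebras—note the inclusion respects $\wedge,\vee,\sim$ and the pseudocomplement, and commutes with the cyclic shift $t$) gives simplicity of $(T_{3,k},t)$, and the case $i=4$ is immediate (or already in Corollary~\ref{azg}).

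**Main obstacle.** The only genuinely delicate point is verifying that $T_{3,k}$ really is a $\mathcal{C}_k$-\emph{subalgebra} of $T_{4,k}$ — i.e.\ that the underlying $mpM$-operations on $T_3$ agree with the restrictions of those on $T_4$, so that the pointwise inclusion $T_{3,k}\hookrightarrow T_{4,k}$ is a homomorphism commuting with $t$. From the explicit tables in the preliminaries ($\sim a=a$ and $a^\ast=0$ in $T_3$; in $T_4$ one has $\sim a=b$, so the single algebras are not subalgebras of one another), one must instead argue that $T_3$ embeds into $T_4$ via a suitable \emph{map} (not the literal inclusion), or—more robustly—bypass the issue entirely by showing directly that $K(T_{3,k})=\{0,1\}$ by the same three-case analysis as in the proof of Corollary~\ref{azg}: if $z=(x_1,\dots,x_k)\in K(T_{3,k})$ with $z\ne 0,1$, then either some $x_i=a$, forcing $\nabla z\ne z$ since $\nabla a=1$, or all $x_i\in\{0,1\}$ not all equal, forcing $tz\ne z$; either way a contradiction, so $K(T_{3,k})=\{0,1\}$ and Theorem~\ref{cap3simple} applies. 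I would present the proof in this self-contained form, remarking that it mirrors the argument of Corollary~\ref{azg}, and additionally record the subalgebra interpretation via Proposition~\ref{C3Prop}(c) as the conceptual reason the result holds.
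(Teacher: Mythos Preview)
You have misread the statement. The corollary is not asserting that $(T_{3,k},t)$ and $(T_{4,k},t)$ are simple---that is exactly Corollary~\ref{azg}, already proved. Nor is it asserting that $(T_{3,k},t)$ is a subalgebra of $(T_{4,k},t)$; in fact Lemma~\ref{sub} explicitly records that $T_{3,d}$ is \emph{not} a subalgebra of $T_{4,d}$, and your own ``Main obstacle'' paragraph correctly notices that the tables for $T_3$ and $T_4$ are incompatible (in $T_3$ one has $\sim a=a$, in $T_4$ one has $\sim a=b$). The intended reading, confirmed by the downstream Theorem~\ref{cklocfini} (``the only simple algebras are $(T_{i,k},t)$ with $i=3,4$ and their subalgebras''), is: \emph{every ${\cal C}_k$-subalgebra of $(T_{i,k},t)$, for $i=3,4$, is itself simple.}

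The irony is that your ``Key steps'' paragraph already contains the complete correct proof of the correct statement. You wrote: for any ${\cal C}_k$-subalgebra $(S,t_S)$ of $(T_{4,k},t)$, Proposition~\ref{C3Prop}(c) gives $K(S)=K(T_{4,k})\cap S=\{0,1\}$, whence $S$ is simple by Theorem~\ref{cap3simple}. That sentence, together with the identical argument for $i=3$, \emph{is} the proof, and it is exactly the paper's proof (which simply cites Proposition~\ref{C3Prop}). What goes wrong is only that you then specialize to $S=(T_{3,k},t)$, which is neither needed nor valid, and your fallback of directly computing $K(T_{3,k})$ merely reproves Corollary~\ref{azg}. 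Drop those parts, keep the general-subalgebra argument, and you are done.
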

\begin{dem}
It  immediately follows from Proposition \ref{C3Prop}.
\end{dem}

\begin{lem}\label{simpfinit}
Every simple  ${\cal C}_k$-algebra is finite.
\end{lem}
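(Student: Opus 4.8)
Let $(A,t)$ be a simple ${\cal C}_{k}$-algebra; in particular $A$ is non-trivial. The idea is to work with the $mpM$-reduct of $A$ together with the action of the automorphism $t$ on its congruence lattice. I would use that the variety of $mpM$-algebras is semisimple and that its simple members are exactly the three finite algebras $T_{2},T_{3},T_{4}$ (this is the case $k=1$ of the semisimplicity established above; see also \cite{NO,AVF}). Hence there is a maximal $mpM$-congruence $\theta$ of $A$ with $A/\theta\cong S$ for some $S\in\{T_{2},T_{3},T_{4}\}$.

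\noindent\textbf{Key step.} Since $t$ is an $mpM$-automorphism of $A$, for every $j\geq 0$ the relation $t^{j}\!\cdot\theta:=\{(t^{j}x,t^{j}y):(x,y)\in\theta\}$ is again a maximal $mpM$-congruence, and $t^{j}$ induces an isomorphism $A/\theta\to A/(t^{j}\!\cdot\theta)$; in particular $|A/(t^{j}\!\cdot\theta)|=|S|$. As $t^{k}=\mathrm{id}_{A}$, the family $\{t^{j}\!\cdot\theta\}_{j\geq 0}$ is finite, say of cardinality $d$, with $d\mid k$ and $t^{d}\!\cdot\theta=\theta$. Put $\Theta:=\bigcap_{j=0}^{d-1}t^{j}\!\cdot\theta$. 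A one-line computation using $t^{d}\!\cdot\theta=\theta$ gives $t\!\cdot\Theta=\Theta$, so $\Theta$ is compatible with $t$ (hence also with $t^{-1}=t^{k-1}$); being an intersection of $mpM$-congruences, $\Theta$ is therefore a congruence of the ${\cal C}_{k}$-algebra $(A,t)$. Now $\Theta\subseteq\theta\subsetneq A\times A$, so $\Theta$ is a \emph{proper} congruence of the simple algebra $(A,t)$, whence $\Theta=\Delta_{A}$, the identity congruence. Consequently the canonical homomorphism $A\longrightarrow\prod_{j=0}^{d-1}A/(t^{j}\!\cdot\theta)$ is injective, so $A$ embeds into $S^{d}$ and $|A|\leq|S|^{d}\leq 4^{k}$. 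In particular $A$ is finite. (The bound is sharp: $(T_{4,k},t)$ of Corollary \ref{azg} is simple and has $4^{k}$ elements.)

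\noindent\textbf{Main obstacle.} The only part that is not pure bookkeeping is the input that the $mpM$-variety is semisimple with simple members $T_{2},T_{3},T_{4}$; everything else follows from the definition of a simple algebra and from how $t$ interacts with congruences. An alternative that stays inside the machinery of the present paper uses Theorem \ref{C2T1}: since $(A,t)$ is simple, $\{1\}$ is its only proper $c$-filter and hence a maximal $c$-filter, so $\{1\}=\bigcap_{i=0}^{k-1}t^{i}U\cap\varphi(t^{i}U)$ for some ultrafilter $U$; grouping the $2k$ prime filters into the Birula--Rasiowa pairs $\{t^{i}U,\varphi(t^{i}U)\}$ exhibits $A$ inside a product of at most $k$ De Morgan subalgebras of $T_{4}$. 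The delicate point of that route is to verify that these finitely many quotients already separate the points of $A$, which is why I would prefer the congruence-orbit argument, where this is immediate. One could also first show, via the action of $t$ on the centre, that $\triangle A$ is a finite Boolean algebra, decompose $A$ over its atoms, and bound each factor (an $mpM$-algebra with trivial centre has at most four elements); this works too but yields a weaker bound.
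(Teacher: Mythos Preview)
Your congruence--orbit argument is correct and yields the sharp bound $|A|\le 4^{k}$: the intersection $\Theta=\bigcap_{j=0}^{d-1}t^{j}\!\cdot\theta$ is indeed a ${\cal C}_{k}$-congruence (it is an $mpM$-congruence that is $t$-stable), it is proper because $\theta$ is, and simplicity then forces $\Theta=\Delta_{A}$, giving the embedding $A\hookrightarrow S^{d}$.

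This is, however, a genuinely different route from the paper's. The paper argues entirely through the filter machinery of Section~2: since $(A,t)$ is simple, $\{1\}$ is the unique maximal $c$-filter, so Theorem~\ref{C2T1} gives an ultrafilter $U$ with $\{1\}=\bigcap_{i=0}^{k-1}t^{i}U\cap\varphi(t^{i}U)$; the uniqueness clause of that theorem (together with Lemma~\ref{C2L1}(f)) then forces \emph{every} prime filter of $A$ to lie in the finite list $\{t^{i}U,\varphi(t^{i}U)\}_{0\le i\le k-1}$, and a bounded distributive lattice with finite prime spectrum is finite. In particular the paper never embeds $A$ into a product and never invokes the $mpM$-simples; it only counts prime filters. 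Your description of this alternative as ``exhibiting $A$ inside a product of at most $k$ De Morgan subalgebras of $T_{4}$'' is a legitimate way to finish, but it is not what the paper actually does, and the ``delicate point'' you flag (point-separation by those $2k$ filters) is sidestepped there: the paper uses that the list \emph{exhausts} the spectrum, not that it separates points. What your approach buys is independence from Theorem~\ref{C2T1} and an explicit cardinality bound; what the paper's approach buys is that it stays in the prime-filter language already in place and feeds directly into the spectrum computation of Theorem~\ref{subsimples}.
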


\begin{dem}
Let $A$ be a  simple algebra of the variety. Then,  $\{1\}$ is a maximal $c$-filter of $A$ and from   Theorem \ref{C2T1} there is an ultrafilter $U$ such that  $\{1\}=\bigcap \limits_{i=0}^{k-1} t^i U \cap \varphi(t^i U)$. From the latter, we can infer that the unique  ultrafilters of $A$ are $\{ t^i U ,\varphi(t^i U)\}_{\{0\leq i\leq k-1\}}$, which is a finite set. Thus, the prime spectrum of  $A$ is finite and so  $A$ is finite. 
\end{dem}

\begin{theo}\label{subsimples}
If $(A,t)$ is a simple ${\cal C}_k$-algebra, then   $(A,t)\simeq (T_{i,d},t)$ with $i=2,3,4$ and $T_{i,d}$ is $d$-periodic for some positive integer  $d$ such that $d/k$.
\end{theo}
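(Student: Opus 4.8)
The plan is to combine the structural results already proved—Lemma~\ref{simpfinit} (every simple ${\cal C}_k$-algebra is finite), Theorem~\ref{cap3simple} (simplicity is equivalent to $K(A)=\{0,1\}$), and Theorem~\ref{subsimples}-predecessors about maximal $c$-filters—with the classification of simple $mpM$-algebras into $T_2$, $T_3$, $T_4$. First I would observe that, since $(A,t)$ is simple it is in particular finite, and its $mpM$-reduct is a finite $mpM$-algebra, hence a subdirect product of copies of $T_2$, $T_3$, $T_4$. The next step is to identify which ``shape'' of subdirect product $A$ must have: because $t$ is an automorphism of order dividing $k$, it permutes the minimal prime filters (equivalently the join-irreducibles, or the factors of the subdirect decomposition), and the simplicity condition $K(A)=\{0,1\}$ forces this permutation to act so that $A$ has no nontrivial $t$-invariant direct factor. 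Concretely, if $A$ decomposed as a product of two nonzero $t$-invariant ${\cal C}_k$-algebras, each factor would contribute a nontrivial element to $K(A)$, contradicting Theorem~\ref{cap3simple}(iii); so the $t$-action on the set of directly indecomposable factors must be a single orbit.

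Then I would argue that all factors in that single orbit are isomorphic (as $mpM$-algebras) to one fixed $T_i$, $i\in\{2,3,4\}$, because $t$ carries each factor isomorphically onto the next; let $d$ be the size of the orbit, i.e. the length of the $t$-cycle. This gives a ${\cal C}_k$-algebra isomorphism $(A,t)\simeq (T_i^d,\sigma)$ where $\sigma$ is the cyclic shift of coordinates, which is exactly $(T_{i,d},t)$ in the notation introduced before Corollary~\ref{azg}; and $T_{i,d}$ is $d$-periodic by construction. Finally, $d\mid k$ follows from the division argument already used in Lemma~\ref{div}(i): writing $k=dq+r$ with $0\le r<d$ and using $t^k=\mathrm{id}$ on $A$ together with $t^d=\mathrm{id}$ on $A$ (the latter being exactly $d$-periodicity), we get $t^r=\mathrm{id}$, and minimality of $d$ forces $r=0$. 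One should also double-check via Theorem~\ref{C2T1} / Lemma~\ref{div}(ii) that the number of ultrafilters of $A$ is consistent with a single $t$-orbit, which pins down $d$ as the period of the corresponding $d$-filter $\{1\}$.

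The main obstacle I expect is making rigorous the passage ``$t$ permutes the directly indecomposable factors, and simplicity forces a single orbit, with isomorphic factors'': one must be careful that the subdirect decomposition of the $mpM$-reduct is canonical enough that $t$ genuinely acts on the index set (using uniqueness of the representation in terms of minimal prime filters / atoms of the congruence lattice), and that a $t$-invariant union of orbits yields an honest ${\cal C}_k$-direct-factor whose ``copy of $1$'' lands in $K(A)$. Once the orbit structure is under control, identifying the algebra with $(T_{i,d},t)$ and checking $d\mid k$ is routine, the latter being a verbatim repetition of the euclidean-division trick from Lemma~\ref{div}(i). I would also remark that the three cases $i=2,3,4$ are genuinely realized, since Corollary~\ref{azg} already shows each $(T_{i,d},t)$ is simple, so the classification is exhaustive and non-redundant.
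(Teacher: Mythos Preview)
Your approach differs from the paper's. The paper works directly with the prime spectrum: using finiteness it picks a prime element $p$ with $[p)$ an ultrafilter, classifies $p$ via the Birula--Rasiowa transformation $\psi$ into three types ($p=\psi(p)$, $p<\psi(p)$, or $p,\psi(p)$ incomparable), and in each case matches the set $\{t^jp,\psi(t^jp):0\le j\le d-1\}$ of prime elements of $A$ with the prime elements of $T_{2,d}$, $T_{3,d}$, $T_{4,d}$ respectively to produce an explicit isomorphism; the integer $d$ is read off from Lemma~\ref{div} applied to the $c$-filter $\{1\}$. Your route through the direct decomposition of the $mpM$-reduct into simple factors $T_2,T_3,T_4$ together with a single $t$-orbit on those factors is a legitimate alternative, more structural and closer in spirit to a Krull--Schmidt argument.

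There is, however, a gap at the step ``this gives $(A,t)\simeq(T_i^d,\sigma)$ with $\sigma$ the cyclic shift''. A single $t$-orbit of length $d$ on the directly indecomposable factors only tells you that, after identifying each factor with $T_i$, the automorphism $t$ takes the form $(x_1,\dots,x_d)\mapsto(\tau(x_d),x_1,\dots,x_{d-1})$ for some $\tau\in\mathrm{Aut}(T_i)$, and then $t^d$ acts as $\tau$ on every coordinate; you recover the pure shift only when $\tau=\mathrm{id}$. For $i=2,3$ this is automatic since $\mathrm{Aut}(T_i)$ is trivial, but $T_4$ carries the nontrivial involution $s:a\leftrightarrow b$. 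Concretely, $(T_4,s)$ is a simple ${\cal C}_2$-algebra (by Theorem~\ref{cap3simple}, since $\triangle a=\triangle b=0$), it has a single indecomposable $mpM$-factor so your $d$ would equal $1$, yet $(T_4,s)\not\simeq(T_{4,1},\mathrm{id})$ because any ${\cal C}_2$-isomorphism would force $s=\mathrm{id}$; nor does it match any other $(T_{i,d'},\text{shift})$ by a cardinality count. Thus your identification of orbit length with period, and the conclusion $t^d=\mathrm{id}$, break down for $i=4$. The paper's own proof dispatches the type~III case with ``by analogous reasoning'' and does not visibly resolve this point either, so the difficulty is intrinsic to the statement rather than a defect of your particular strategy; closing it would require either showing that the $s$-twisted shifts on $T_4^d$ are ${\cal C}_k$-isomorphic to pure shifts (they are not, by the computation of $t^d$ above), or enlarging the list of target algebras.
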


\begin{dem}
According to Lemma \ref{simpfinit} we have that  $A$ is finite and let us suppose that $\{1\}$ is $d$-periodic with $d/k$. Then, $\{1\}=\bigcap \limits_{i=0}^{d-1} t^i P \cap \varphi(t^i P)$ and so  the prime spectrum of  $A$ is $\{ t^j P ,\varphi(t^j P)\}_{\{0\leq j\leq d-1\}}$, see Lemma \ref{div}.

On the other hand, since $A$ is a finite lattice then there a prime element $p\in \Pi(A)$ such that  $P=[p)$. That is to say, we can consider the transformation as follows: $\psi (p) = q$ if only if  $\varphi([p))=[q)$. This allows us to identify every prime element with of the prime filter of the spectrum. Thus,  $p$ verifies the following cases: type I: $p=\psi(p)$, type II: $p<\psi( p)$ or type  III: $p$ and $\psi (p)$ are  incomparable. Now, let us suppose  $p$ is type  I, then the prime spectrum formed by the anti-chain  $\{p,tp,\ldots, t^{d-1}p\}$. It is not hard to see that  every  $t^{i}p$ is  type  I. Therefore, there is a function  $\alpha:A\to T_2^d$ where  $\alpha(p)=(1,0,\ldots,0)$, $\alpha(tp)=(0,1,0,\ldots,0)$, $\ldots$  ,$\alpha(t^jp) =(0,0,\ldots,0,1_j,0,\ldots,0)$. On the other hand, we have that $t\alpha(p)=t(1,0,\ldots,0)=(0,1,0,\ldots,0)$ and in general, we obtain  that $t^j\alpha(p)=t^{j-1}t\alpha(p) =t^{j-1}t(1,0,\ldots,0)=t^{j-1}(0,1,0,\ldots,0)=\cdots =(0,0,\ldots,0,1_j,0,\ldots,0)$. From the latter, it is not hard to see  that  $\alpha$ is  a  ${\cal C}_k$-isomorphism and therefore,   $(A,t)\simeq (T_{2,d},t)$. Now, if $p$ is  type  II, the prime spectrum is formed by  $$\{p,tp,\ldots, t^{d-1}p,\psi(p), \psi(tp),\ldots, \psi(t^{d-1}p) \}$$ where $t^i p < \psi(t^{i}p)$. Let us observe the prime filters of  $T_{3,d}$ are of the form  $(0,\ldots,0, c,0, \ldots,0)$ and $(0,\ldots,0, 1,0, \ldots,0)$. Thus, by  analogous reasoning from the last case, we have that  $(A,t)\simeq (T_{3,d},t)$. If $p$ is  type  III, the prime spectrum is the anti-chain   $$\{p,tp,\ldots, t^{d-1}p,\psi(p), \psi(tp),\ldots, \psi(t^{d-1}p) \}.$$ The prime elements of  $T_{4,d}$ are the form of  $(0,\ldots,0, a,0, \ldots,0)$ and $(0,\ldots,0, b,0, \ldots,0)$. Therefore, by  analogous reasoning we can see that $(A,t)\simeq (T_{4,d},t)$, which completes the proof.
\end{dem}

\

It is well-known that divides relation ``$/$'' between  integers is a partial order. Then, for a given positive integer $k$,  we have the set $Div(k)=\{z: z\,\, \text{divisor of}\,\, k\}$ can be considered as distributive latices; moreover, $k_1\wedge k_2=lcm(k_1,K_2)$ and $k_1\vee k_2=gcd(k_1,K_2)$ are the {\em least common multiple} and {\em greatest common divisor}, respectively. Now, it is possible to see that for every $k$-periodic Boolean algebra, we have ${\cal B}_k$ is lattice-isomorphic to the Boolean algebra   $Div(k)$. Furthermore, for every $d\in Div(k)$ there is a unique  ${\cal B}_g$ associated with $d$ which is a subalgebra of ${\cal B}_k$. Besides, ${\cal B}_g$ is $d$-periodic characterized by $B_d=\{g\in B_k: t\,\, \text{is an automorphism}\,\, \text{and }\,\, t^d g = g \}$.

\begin{lem}\label{sub} 
The subalgebras of $(T_{i,k},t)$ are of the form  $(T_{i,d},t_d)$  with $d/k$ and $i=2,3,4$. Besides, $T_{2,d}$ is a  ${\cal C}_k$-subalgebra of $T_{3,d}$ and $T_{4,d}$, but $T_{3,d}$ is not subalgebra of $T_{4,d}$.

\end{lem}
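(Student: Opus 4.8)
The plan is to reduce the statement to the classification of simple ${\cal C}_k$-algebras already obtained in Theorem \ref{subsimples}, and then to read off, by direct inspection of the three- and four-element blocks, which types can sit inside which.

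First I would show that \emph{every subalgebra of $(T_{i,k},t)$ is simple}. If $(S,t_S)$ is a subalgebra, then $0,1\in S$ and, by Proposition \ref{C3Prop}(c), $K(S)=K(T_{i,k})\cap S$. By Corollary \ref{azg} the algebra $(T_{i,k},t)$ is simple, so Theorem \ref{cap3simple} gives $K(T_{i,k})=\{0,1\}$; hence $K(S)=\{0,1\}$, and applying Theorem \ref{cap3simple} to $S$ we obtain that $(S,t_S)$ is simple. Therefore Theorem \ref{subsimples} yields $(S,t_S)\simeq(T_{j,d},t_d)$ for some $j\in\{2,3,4\}$ and some divisor $d$ of $k$, with $T_{j,d}$ being $d$-periodic. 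Conversely, for each $d\mid k$ the block-repetition map $(x_1,\dots,x_d)\mapsto(x_1,\dots,x_d,x_1,\dots,x_d,\dots)$ (with $k/d$ blocks) is an injective ${\cal C}_k$-homomorphism $(T_{j,d},t_d)\to(T_{j,k},t)$, so each such algebra really occurs as a subalgebra of $(T_{j,k},t)$.

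Next I would determine which $j$ can occur in a given $T_{i,k}$, which also settles the containments in the second sentence. The sub-$mpM$-algebras of $T_3$ are $\{0,1\}$ and $T_3$ (since $a^\ast=0$, the set $\{0,a,1\}$ is closed under $^\ast$), and those of $T_4$ are $\{0,1\}$ and $T_4$ (since $a^\ast=b$, any subalgebra containing $a$ must contain $b$ and hence is all of $T_4$). Copying a coordinatewise embedding $\{0,1\}\hookrightarrow T_j$ gives the ${\cal C}_k$-embeddings $T_{2,d}\hookrightarrow T_{3,d}$ and $T_{2,d}\hookrightarrow T_{4,d}$. For the non-embedding, suppose $f\colon T_{3,d}\to T_{4,k}$ were a ${\cal C}_k$-embedding. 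Forgetting $t$ and composing with the coordinate projections $\pi_\ell\colon T_{4,k}\to T_4$ gives $mpM$-homomorphisms $\pi_\ell f\colon T_{3,d}\to T_4$, each with image $\{0,1\}$ or $T_4$. If every image were $\{0,1\}$, then $f(T_{3,d})$ would lie in the Boolean $mpM$-algebra $\{0,1\}^k$, which is impossible because $T_{3,d}$ has the non-Boolean element $(a,\dots,a)$ and $f$ is injective. Hence some $\pi_\ell f$ is onto $T_4$; but $T_{3,d}$ is simple, so a homomorphism from it onto the nontrivial algebra $T_4$ must be an isomorphism, which is absurd by cardinality. Thus $T_{3,d}$ is not a ${\cal C}_k$-subalgebra of $T_{4,k}$, in particular not of $T_{4,d}$. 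The same projection-plus-simplicity argument, applied with the lists of sub-$mpM$-algebras of $T_3$ and $T_2$, rules out $T_{3,d}$ and $T_{4,d}$ inside $T_{2,k}$ and $T_{4,d}$ inside $T_{3,k}$.

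Putting the two steps together, up to isomorphism the subalgebras of $(T_{i,k},t)$ are exactly the $(T_{j,d},t_d)$ with $d\mid k$ and $T_j$ a sub-$mpM$-algebra of $T_i$, and the only inclusions among these building blocks are $T_{2,d}\hookrightarrow T_{3,d}$ and $T_{2,d}\hookrightarrow T_{4,d}$, while $T_{3,d}$ is not a subalgebra of $T_{4,d}$, which is what was asserted. The step I expect to demand the most care is the passage from the abstract isomorphism $(S,t_S)\simeq(T_{j,d},t_d)$ of the first step to the concrete positional statements of the second: one must check that the coordinate projections genuinely carry the $mpM$-type down to a single block and that $t$-equivariance is respected throughout. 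The lattice-theoretic computations on $T_2,T_3,T_4$ themselves are routine once this reduction is in place.
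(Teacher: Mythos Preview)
Your overall strategy --- reducing to the classification of simple ${\cal C}_k$-algebras via Proposition \ref{C3Prop}(c) and Theorem \ref{subsimples} --- differs from the paper's, which instead constructs the subalgebras $T_{i,k}^d=\{x\in T_i^k:t^dx=x\}$ explicitly and argues through the Boolean part and the divisor lattice. Your first step (every subalgebra of a simple ${\cal C}_k$-algebra is simple, hence isomorphic to some $T_{j,d}$) is correct and more conceptual than the paper's hands-on construction.

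There is, however, a genuine gap in the non-embedding part. You write: ``$T_{3,d}$ is simple, so a homomorphism from it onto the nontrivial algebra $T_4$ must be an isomorphism''. But $T_{3,d}$ is simple only \emph{as a ${\cal C}_k$-algebra}; you have explicitly forgotten $t$, so $\pi_\ell f$ is merely an $mpM$-homomorphism, and $T_{3,d}=T_3^d$ is \emph{not} simple as an $mpM$-algebra when $d>1$. Hence simplicity gives no control over the kernel of $\pi_\ell f$, and the same defect recurs where you rule out $T_{4,d}$ inside $T_{3,k}$. The repair is short: argue via an $mpM$-identity separating the two generators. For instance $x\leq\sim x^{\ast}$ holds in $T_3$ (hence in every $mpM$-quotient of $T_3^d$) but fails in $T_4$, since $a\not\leq\sim a^{\ast}=\sim b=b$; thus no $mpM$-homomorphism $T_3^d\to T_4$ is onto. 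Dually $x^{\ast\ast}=x$ holds in $T_4$ but fails in $T_3$ (as $a^{\ast\ast}=1$), blocking surjections $T_4^d\to T_3$. With this correction in place the rest of your argument goes through.
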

\begin{dem}
Let us take  $T_{i,k}^d=\{x\in T_i^k: t^dx=x\}$, then it is clear that $(T_{i,k}^d,t^\prime_d)$ is a subalgebra of  $(T_{i,k},t)$ where the restriction  $t|_{T_{i,k}^d} = t^\prime_d$ is an automorphism over $T_{i,k}^d$. On the other hand, from Lemma \ref{div} we have that the prime spectrum of $T_{i,k}^d$ has the form  $\{ t^i P, \varphi(t^i P)\}_{\{ 0\leq i\leq d-1\} }$. So, $(T_{i,k}^d,t^\prime_d)\simeq (T_{i,d},t_d)$.

On the other hand, it is clear that  $(B(T_{i,k}),t_B)\simeq B_k$ for $i=2,3$ where $t|_{B(T_{i,k})}=t_B$ and  $B(A)$ is the set of Boolean elements of ${\cal C}_k$-alegbra $A$. Thus, there is $d$ such that $d/k$ and  $B_d$ is a sublagebra of  $B_k$. So, since $(B(T_{i,d}),t_B)\simeq B_d$, we have that the sublagebras  $(T_{i,k},t)$ with  $i=2,3$ are the form   $(T_{i,d},t_d)$.

For the case $i=4$, it is clear that  $B(T_{4,k})=T_{4,k}$. Let us suppose that $d$ is a divisor of $k$, then $k=d\cdot q$. Now, we can consider the elements of $T_{4,k}$ as a sequence  $x=({\overline x_{1} },{\overline x_{2}},\ldots,{\overline x_{q}})$ where  ${\overline x_{1}}=(x_1, \ldots, x_q)$, $\ldots$, ${\overline x_{q}}=(x_{dq-q},x_{dq-(q-1)},\ldots, x_{dq})$. Now, let us consider the set $D=\{x\in T_{4,k}: x=({\overline x_{1}},{\overline x_{2}},\ldots,{\overline x_{q}}) , \, {\rm such\,\, that  } \,  \, \overline{x_{1}} =\overline{ x_{2}} = \ldots = \overline{ x_{d}} \} $. Then, it is clear that $D$ is a  subalgebra of  $T_{4,k}$ and taking the automorphism $t|_D:D\to D$, we have $(D,t|_D)$ is a  $d$-periodic. Their atoms are the form  $a_1=(a,0,\ldots,0), \ldots, a_d=(0,0,\ldots,a)$ and $b_1=(b,0,\ldots,0), \ldots, b_d=(0,0,\ldots,b)$ and it is verified $t(a_i)= a_{i+1}$ and $t(b_i)= b_{i+1}$. Thus, $(D,t|_D)\simeq (T_{4,d},t_d)$.

To end the proof, we will determine all sublagebras of $T_{4,k}$ where the elements of the sequences belong to  $T_2$. Let us take 
$$D^\prime=\{x\in T_{4,k}: x=({\overline x_{1}},{\overline x_{2}},\ldots,{\overline x_{q}}) , \, {\rm such \,\, that  } \,  \, \overline{x_{1}} =\overline{ x_{2}} = \ldots = \overline{ x_{d}}\in T_{2,q}\}$$ 

and  
$$t^\prime=t|_{D^\prime}:D^\prime\to D^\prime,$$ then it is not hard to see that   $(D^\prime,t^\prime)\simeq (T_{2,d},t_d)$. Therefore, $(T_{2,d},t_d)$ and $(T_{4,d},t_d)$ are the unique subalgebras of  $(T_{4,k},t)$. Since $T_3$ is not subalgebra of  $T_4$, then  $T_{3,d}$ is not subalgebra of $T_{4,d}$, which completes the proof.

\end{dem}

The following Lemma will be used in the next Section and it is an immediate consequence of Lemma \ref{sub}.


\begin{lem}
 $T_{i,d_1}\cap T_{i,d_2} = T_{i, gcd(d_1,d_2)}$,  $T_{2,d_1}\cap T_{i,d_2} = T_{2, gcd(d_1,d_2)}$, $T_{3,d_1}\cap T_{4,d_2} = T_{2, gcd(d_1,d_2)}$ with $i=2,3,4$
\end{lem}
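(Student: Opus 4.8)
The plan is to derive all three identities directly from Lemma~\ref{sub}, using the lattice-theoretic description of the periods. First I would recall that, by Lemma~\ref{sub}, the ${\cal C}_k$-subalgebras of a fixed $(T_{i,k},t)$ are precisely the $(T_{i,d},t_d)$ for $d\mid k$, and that $T_{i,d_1}\subseteq T_{i,d_2}$ exactly when $d_1\mid d_2$; moreover, inside the ambient algebra $T_i^{k}$ the concrete realization of $T_{i,d}$ is the set of sequences fixed by $t^{d}$ (the sets $T_{i,k}^{d}$ from the proof of Lemma~\ref{sub}). With this in hand, the first identity $T_{i,d_1}\cap T_{i,d_2}=T_{i,\gcd(d_1,d_2)}$ is the statement that a sequence is fixed by both $t^{d_1}$ and $t^{d_2}$ if and only if it is fixed by $t^{\gcd(d_1,d_2)}$; the nontrivial direction is the B\'ezout observation that $t^{d_1}x=x$ and $t^{d_2}x=x$ force $t^{\gcd(d_1,d_2)}x=x$, which follows since $\gcd(d_1,d_2)=u d_1+v d_2$ for suitable integers $u,v$ (adjusting signs using $t^{k}=\mathrm{id}$), together with the converse implication that $\gcd(d_1,d_2)$ divides both $d_i$.

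Second, for the mixed identity $T_{2,d_1}\cap T_{i,d_2}=T_{2,\gcd(d_1,d_2)}$ with $i=2,3,4$, I would combine the period computation just done with the fact, also from Lemma~\ref{sub}, that $T_{2,d}$ is a ${\cal C}_k$-subalgebra of $T_{3,d}$ and of $T_{4,d}$ but $T_{3,d}$ is not a subalgebra of $T_{4,d}$. Concretely, any element of the intersection lies both in $T_{2,d_1}$ (so all its coordinates are in $T_2=\{0,1\}$) and in $T_{i,d_2}$ (so it is fixed by $t^{d_2}$); hence it is a $\{0,1\}$-valued sequence fixed by $t^{d_1}$ and by $t^{d_2}$, thus fixed by $t^{\gcd(d_1,d_2)}$, i.e.\ it lies in $T_{2,\gcd(d_1,d_2)}$. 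The reverse inclusion is immediate since $T_{2,\gcd(d_1,d_2)}\subseteq T_{2,d_1}$ and $T_{2,\gcd(d_1,d_2)}\subseteq T_{2,d_2}\subseteq T_{i,d_2}$, the last step using exactly the subalgebra relations recorded in Lemma~\ref{sub}.

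Third, the identity $T_{3,d_1}\cap T_{4,d_2}=T_{2,\gcd(d_1,d_2)}$ is handled the same way, the only extra ingredient being the elementary fact about the single-factor algebras: $T_3\cap T_4=T_2$, because the only elements common to the chain $\{0,a,1\}$ and the ``diamond'' $\{0,a',b',1\}$ (with the De~Morgan/pseudocomplement structure matching) are $0$ and $1$. Spelling this out: an element $x$ in the intersection is a sequence each of whose coordinates belongs to both $T_3$ and $T_4$, hence to $\{0,1\}=T_2$, and $x$ is fixed by $t^{d_1}$ and by $t^{d_2}$, so $x\in T_{2,\gcd(d_1,d_2)}$; conversely $T_{2,\gcd(d_1,d_2)}\subseteq T_{3,d_1}$ and $\subseteq T_{4,d_2}$ by Lemma~\ref{sub}. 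I expect the only mildly delicate point to be bookkeeping the period arithmetic — checking that ``fixed by $t^{d_1}$ and $t^{d_2}$'' really collapses to ``fixed by $t^{\gcd(d_1,d_2)}$'' using $t^{k}=\mathrm{id}$ to make the B\'ezout coefficients act as genuine powers of $t$ — but this is routine, and the rest of the proof is a direct unwinding of the descriptions already established, so the Lemma can legitimately be presented as ``an immediate consequence of Lemma~\ref{sub}'' as the text announces.
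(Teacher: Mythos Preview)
Your proposal is correct and matches the paper's approach exactly: the paper gives no proof at all, merely stating that the Lemma ``is an immediate consequence of Lemma~\ref{sub}'', and you have supplied precisely the natural details (identifying each $T_{i,d}$ with the $t^{d}$-fixed sequences, collapsing the two period constraints to $\gcd(d_1,d_2)$ via B\'ezout, and reading off the coordinate-value restriction $T_3\cap T_4=T_2$) that make that immediacy explicit. The only slightly delicate point you flag --- that $T_{3}$ and $T_{4}$ do not literally sit inside a common ambient algebra, so the third intersection must be read as ``largest common ${\cal C}_k$-subalgebra'' --- is real but harmless, since Lemma~\ref{sub} already lists all subalgebras of each and the common ones are exactly the $T_{2,d'}$ with $d'\mid\gcd(d_1,d_2)$.
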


As directly consequence from Corollary \ref{azg}, Lemma \ref{simpfinit}, Theorem \ref{subsimples} and universal algebra results given in  \cite[Theorem 10.16]{SB.HS}, we conclude: 

\begin{theo}\label{cklocfini}
The variety of  ${\cal C}_k$-algebras is finitely generated and locally finite. Besides, the only simple algebras are $(T_{i,k},t)$ with $i=3,4$ and their subalgebras.
\end{theo}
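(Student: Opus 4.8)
The plan is to deduce Theorem \ref{cklocfini} from the structure theory already established, applying the standard universal-algebra machinery for semisimple varieties. First I would recall that by the earlier sections the variety ${\cal C}_k$ of ${\cal C}_k$-algebras is semisimple (Lemma in Section 3) and that, by Lemma \ref{simpfinit}, every simple ${\cal C}_k$-algebra is finite; moreover Theorem \ref{subsimples} identifies every simple ${\cal C}_k$-algebra up to isomorphism as one of the algebras $(T_{i,d},t_d)$ with $i=2,3,4$ and $d$ a divisor of $k$. Since $T_{2,d}$ is a subalgebra of $T_{3,d}$ (Lemma \ref{sub}) and each $T_{i,d}$ is a subalgebra of $(T_{i,k},t)$ (again Lemma \ref{sub}), the class of simple ${\cal C}_k$-algebras is, up to isomorphism, exactly the set of subalgebras of the two fixed finite algebras $(T_{3,k},t)$ and $(T_{4,k},t)$; in particular it is a \emph{finite} set of finite algebras.

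Next I would invoke the general fact that a semisimple variety is generated (as a variety) by its simple members: every algebra $A$ in ${\cal C}_k$ embeds into a product of simple quotients $A/\theta$, so ${\cal C}_k = \mathbb{V}(\{\text{simple } {\cal C}_k\text{-algebras}\}) = \mathbb{V}(\{(T_{i,k},t): i=3,4\})$, using that every simple algebra is a subalgebra of one of $(T_{3,k},t),(T_{4,k},t)$ and hence lies in $\mathbb{V}((T_{3,k},t))\cup\mathbb{V}((T_{4,k},t))$. Thus ${\cal C}_k$ is generated by the two finite algebras $(T_{3,k},t)$ and $(T_{4,k},t)$, i.e.\ it is finitely generated. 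Then I would apply \cite[Theorem 10.16]{SB.HS}: a finitely generated congruence-distributive (indeed the reducts are distributive lattices, so the variety is congruence-distributive) variety is locally finite, and in fact a variety generated by a finite set of finite algebras is locally finite. This gives both assertions about ${\cal C}_k$ being finitely generated and locally finite.

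Finally, for the last sentence of the theorem I would simply assemble the identification of the simple algebras: by Theorem \ref{subsimples} the simple ${\cal C}_k$-algebras are exactly the algebras isomorphic to some $(T_{i,d},t_d)$ with $i=2,3,4$ and $d/k$, and by Corollary \ref{azg} and the two corollaries following it these are all simple; by Lemma \ref{sub} they are precisely $(T_{i,k},t)$ for $i=3,4$ together with all their subalgebras (note $T_{2,d}$ appears as a subalgebra of $T_{3,k}$, and every $T_{i,d}$ with $d/k$ as a subalgebra of $T_{i,k}$). So the only simple algebras are $(T_{3,k},t)$, $(T_{4,k},t)$ and their subalgebras, as claimed.

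The only genuinely nontrivial point — and the step I would be most careful about — is the passage from ``every simple algebra is finite and is one of finitely many isomorphism types'' to ``the variety is finitely generated'': this needs the semisimplicity established in Section 3 (so that an arbitrary $A\in{\cal C}_k$ is a subdirect product of simple algebras) together with Theorem \ref{subsimples} (so that those simple factors live inside $(T_{3,k},t)$ and $(T_{4,k},t)$). Everything else is a direct citation of \cite[Theorem 10.16]{SB.HS} and the lemmas already proved, so no new calculation is required.
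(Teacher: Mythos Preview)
Your proposal is correct and follows essentially the same approach as the paper: the paper's own proof is a one-line citation of Corollary \ref{azg}, Lemma \ref{simpfinit}, Theorem \ref{subsimples} and \cite[Theorem 10.16]{SB.HS}, and you have simply spelled out the logical chain (semisimplicity $\Rightarrow$ subdirect product of simples $\Rightarrow$ generation by finitely many finite algebras $\Rightarrow$ local finiteness) in more detail. Your remark on congruence-distributivity is extraneous but harmless, since \cite[Theorem 10.16]{SB.HS} already gives local finiteness directly from generation by a finite set of finite algebras.
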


\section{ The cardinality of the finitely generated  ${\cal C}_k$-algebras}

In this section, we will focus on the task of studying the structure of free  ${\cal C}_k$-algebras with a finite number of generators. We denote by $F_{{\cal C}_k}(n)$ this algebra where $n$ is a positive integer. 
From the results showed in the last section, we know that for every $d$ divisor of $k$, the family ${\cal C}$ of the maximal $c$-filters of  $F_{{\cal C}_k}(n)$ can be partitioned in the following way:

$$N_{i,d}=\{ N\in {\cal C}:  F_{{\cal C}_k}(n)/N\simeq T_{i,d}\}.$$  

Thus,  form Theorem \ref{cklocfini} we have:
$$F_{{\cal C}_k}(n)\simeq \prod \limits_{d/k} T_{2,d}^{\alpha_{2,d}} \times \prod \limits_{d/k} T_{3,d}^{\alpha_{3,d}} \times \prod \limits_{d/k} T_{4,d}^{\alpha_{4,d}},$$
where $\alpha_{i,d}=|N_{i,d}|$ with $i=2,3,4$.


\vspace{2mm}

Now, let us consider $Epi(F_{{\cal C}_k}(n),T_{i,d})$  the set of all epimorphisms from $F_{{\cal C}_k}(n)$ into $T_{i,d}$ and we denote by $Aut(T_{i,d})$ the set of all automorphisms  over the algebra  $T_{i,d}$ and  $|X|$ denotes  the number of elements of $X$. It is not hard to see that $Aut(T_{l,d})$ (with $l=2,3$) has only $d$ automorphisms and they are   $t,t^2, \cdots, t^{d-1},t^d$. Besides, it is possible to prove that $|Aut(T_{4,d})|=2d$. Taking the function $s: Epi(F_{{\cal C}_k}(n),T_{i,d})\to N_{i,d} $ defined by $s(h)=ker(h) = h^{-1}(\{1\})$ for every  $h\in Epi(F_{{\cal C}_k}(n),T_{i,d})$, we can see that $s$ is onto and $s^{-1}(N)= \{\alpha\circ h: \alpha \in Aut(T_{i,d})\}$. Therefore, 

$$ \alpha_{i,d} = \frac{|Epi(F_{{\cal C}_k}(n),T_{i,d})|}{|Aut(T_{l,d})|}$$

\

\


On the other hand, for every  $h\in Epi(F_{{\cal C}_k}(n),T_{i,d})$ there is a function  $f:G \to T_{i,d}$ such that  $f=h|_G$. Now, if   $F^\ast(G,T_{i,d})=\{f:G \to T_{i,d}$ such that  $[f(G)]_{{\cal C}_k} = T_{i,d}\}$, then we have: 

$$ |Epi(F_{{\cal C}_k}(n),T_{i,d})|= |F^\ast(G,T_{i,d})|.$$

Where $[f(G)]_{{\cal C}_k}$ denotes the ${\cal C}_k$-algebra generated by $f(G)$. Let us observe that the condition   $[f(G)]_{{\cal C}_k} = T_{i,d}$ is equivalent to asking  $f(G)\subseteq T_{i,d}$ and $f(G)\not\subseteq S$ for every maximal subalgebra $S$ of $T_{i,d}$. If we denote $M(d)$ the set of maximal divisors of $d$ different of $d$. Then, for every maximal subalgebra of  $T_{i,d}$ are of the form $T_{i,x}$ with $x\in M(d)$. Thus, we can write:

$$F^\ast(G,T_{i,d})= F_{i,d} \backslash \bigcup\limits_{l \leq i}\, \bigcup\limits_{x\in M(d)}F_{l,x},$$

where   $F_{i,d}$ is the set of all functions from  $G$ into $T_{i,d}$. Therefore, 

$$ |F^\ast(G,T_{i,d})|= |F_{i,d}|- |\bigcup\limits_{l \leq i}\, \bigcup\limits_{x\in M(d)}F_{l,x}|= (i^d)^n - |\bigcup\limits_{l\leq i }\, \bigcup\limits_{x\in M(d)}F_{l,x}|.$$

On the other hand, it is well-known that every finite set  ${\cal J}$ and the family of finite sets $\{A_j\}_{j\in {\cal J}}$ verify:  
$$ |\bigcup\limits_{j\in {\cal J}} A_i| = \sum\limits_{X\subseteq {\cal J},\, X\not=\emptyset } (-1)^{|X|-1} | \bigcap\limits_{j\in X} A_j|.$$
Now, let us observe that if we take  $I=\{w: w\leq i\}$, then: 
$$|\bigcup\limits_{l\leq i}\bigcup\limits_{x\in M(d)}\,F_{l,x}|= |\bigcup\limits_{(z,t)\in I\times M(d)}\, F_{z,t}| = \sum\limits_{X\subseteq I\times M(d)} (-1)^{|X|} \, |\bigcap\limits_{(j_1,j_2)\in X}F_{j_1,j_2}|,$$ 
where 
$$\bigcap\limits_{(j_1,j_2)\in X} F_{j_1,j_2} = \{f\in F_{i,d}:\, \, f:G\to \bigcap\limits_{(j_1,j_2)\in X} T_{j_1,j_2}\}.$$

\section*{Calculating $\mbox{\boldmath $\alpha_{i,d}$}$}

If $i=2$, then  $I=\{2\}$ and therefore the following holds: 

$$ \alpha_{2,d}=  \frac{ (2^d)^n -|\bigcup\limits_{x\in M(d)} F_{2,x}|}{d}= \frac{ (2^d)^n -\sum\limits_{Z\subseteq  M(d), Z\not=\emptyset } (-1)^{|Z|-1} \, |\bigcap\limits_{w \in Z}F_{2,w}|}{d}$$ $$= \frac{ (2^d)^n -\sum\limits_{Z\subseteq  M(d), Z\not=\emptyset } (-1)^{|Z|-1} \, (2^{mcd(Z)})^n}{d},$$
where $gcd(Z)$ is the set of greatest common divisors of $Z$. 

\vspace{2mm}

If $i=3$, then:

$$ \alpha_{3,d}= \frac{ (3^d)^n - |\bigcup\limits_{(j,x)\in \{2,3\}\times M(d)} F_{j,x}\cup F_{2,d} |}{d} = \frac{ (3^d)^n -|F_{2,d} \cup \bigcup\limits_{y\in  M(d)} F_{3,y}|}{d}.$$


\noindent Since
$$|F_{2,d} \cup \bigcup\limits_{y\in  M(d)} F_{3,y}|= | F_{2,d}| + |\bigcup\limits_{y\in  M(d)} F_{3,y}|- | F_{2,d} \cap \bigcup\limits_{y\in  M(d)} F_{3,y}|$$
and 
$$| F_{2,d} \cap \bigcup\limits_{y\in  M(d)} F_{3,y}|= | \bigcup\limits_{y\in  M(d)} ( F_{2,d} \cap F_{3,y})|,$$
we have that 
$$\alpha_{3,d} = \frac{ (3^d)^n - (2^d)^n  -  \sum\limits_{W\subseteq  M(d), W\not=\emptyset } (-1)^{|W|-1} \, (3^{gcd(W)})^n  +  \sum\limits_{H\subseteq  M(d)\times M(d), H\not=\emptyset } (-1)^{|H|-1} \, (2^{gcd(H_1\cup H_2)})^n }{d}$$
where $H_1= \{x: (x,y)\in H\}$ and $H_2= \{y: (x,y)\in H \}$. 

\vspace{2mm}

Finally, for  $i=4$ and by Lemma \ref{sub}, we have:
$$ \alpha_{4,d}=  \frac{ (4^d)^n -|\bigcup\limits_{(j,x)\in \{2,4\}\times M(d)} F_{j,x}\cup F_{2,d} |}{2d} = \frac{ (4^d)^n -|F_{2,d} \cup \bigcup\limits_{y\in  M(d)} F_{4,y}|}{2d}. $$
By analogous reasoning to the last case, we have:
 
$$\alpha_{4,d} = \frac{ (4^d)^n - (2^{d})^n  -  \sum\limits_{W\subseteq  M(d), W\not=\emptyset } (-1)^{|W|-1} \, (4^{gcd(W)})^n  +  \sum\limits_{H\subseteq  M(d)\times M(d), H\not=\emptyset } (-1)^{|H|-1} \, (2^{gcd(H_1\cup H_2)})^n }{2d}.$$

\

From the last assertions, we have proved the following Theorem:

\begin{theo}\label{tfal}
Let $F_{{\cal C}_k}(n)$ be the free ${\cal C}_k$-algebra with $n$ generators, then: 

$$|F_{{\cal C}_k}(n)|= \prod \limits_{d/k} 2^{\alpha_{2,d}} \cdot \prod \limits_{d/k} 3^{\alpha_{3,d}} \cdot \prod \limits_{d/k} 4^{\alpha_{4,d}}$$
with
\begin{itemize}
  \item[]  $\alpha_{2,d}=   \frac{ (2^d)^n -\sum\limits_{Z\subseteq  M(d), Z\not=\emptyset } (-1)^{|Z|-1} \, (2^{gcd(Z)})^n}{d}$,
  \item[]  $\alpha_{3,d} = \frac{ (3^d)^n - (2^d)^n  -  \sum\limits_{W\subseteq  M(d), W\not=\emptyset } (-1)^{|W|-1} \, (3^{gcd(W)})^n  +  \sum\limits_{H\subseteq  M(d)\times M(d), H\not=\emptyset } (-1)^{|H|-1} \, (2^{gcd(H_1\cup H_2)})^n }{d}$,
  \item[]   $\alpha_{4,d} = \frac{ (4^d)^n -  \sum\limits_{W\subseteq  M(d), W\not=\emptyset } (-1)^{|W|-1} \, (4^{gcd(W)})^n  - (2^{d})^n +  \sum\limits_{H\subseteq  M(d)\times M(d), H\not=\emptyset } (-1)^{|H|-1} \, (2^{gcd(H_1\cup H_2)})^n }{2d}$.
\end{itemize}
\end{theo}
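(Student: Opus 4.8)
The plan is to combine the structural decomposition already established with a careful inclusion–exclusion count. By Theorem \ref{cklocfini}, the variety is generated by the finite simple algebras $(T_{i,k},t)$ with $i=3,4$ and their subalgebras, which by Lemma \ref{sub} are exactly the $(T_{i,d},t_d)$ with $d/k$ and $i=2,3,4$. Hence every $N$ in the family ${\cal C}$ of maximal $c$-filters of $F_{{\cal C}_k}(n)$ yields a quotient $F_{{\cal C}_k}(n)/N$ isomorphic to one of these, and the partition $N_{i,d}=\{N\in{\cal C}: F_{{\cal C}_k}(n)/N\simeq T_{i,d}\}$ gives, via the standard representation of a finitely generated algebra in a locally finite semisimple variety as a product over its maximal congruences (using Remark \ref{Ockc} to identify congruences with $c$-filters and semisimplicity from Section 3),
$$F_{{\cal C}_k}(n)\simeq \prod_{d/k} T_{2,d}^{\alpha_{2,d}}\times\prod_{d/k} T_{3,d}^{\alpha_{3,d}}\times\prod_{d/k} T_{4,d}^{\alpha_{4,d}},\qquad \alpha_{i,d}=|N_{i,d}|.$$
Since $|T_{i,d}|=i^d$, taking cardinalities immediately gives the displayed product formula for $|F_{{\cal C}_k}(n)|$, and it remains only to compute each $\alpha_{i,d}$.

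Next I would set up the counting of $\alpha_{i,d}$ through epimorphisms. The map $s:Epi(F_{{\cal C}_k}(n),T_{i,d})\to N_{i,d}$, $s(h)=h^{-1}(\{1\})$, is onto, and since $T_{i,d}$ is simple each fibre $s^{-1}(N)$ is exactly $\{\alpha\circ h:\alpha\in Aut(T_{i,d})\}$, whence $\alpha_{i,d}=|Epi(F_{{\cal C}_k}(n),T_{i,d})|/|Aut(T_{i,d})|$. Here one checks $|Aut(T_{2,d})|=|Aut(T_{3,d})|=d$ (the automorphisms are the powers of $t$, since the $mpM$-structure of $T_2,T_3$ is rigid) and $|Aut(T_{4,d})|=2d$ (the extra factor $2$ from swapping $a,b$ in each coordinate). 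Restricting an epimorphism to the free generators $G$ (with $|G|=n$) gives a bijection between $Epi(F_{{\cal C}_k}(n),T_{i,d})$ and the set $F^\ast(G,T_{i,d})$ of maps $f:G\to T_{i,d}$ whose image generates $T_{i,d}$ as a ${\cal C}_k$-algebra. By Lemma \ref{sub}, generating $T_{i,d}$ is equivalent to $f(G)\not\subseteq T_{l,x}$ for every maximal proper subalgebra, and these are the $T_{l,x}$ with $x\in M(d)$ and $l\le i$ (for $i=2$ only $l=2$; for $i=3$ also $T_{2,d}$; for $i=4$ also $T_{2,d}$, with $T_3$ excluded since $T_3\not\le T_4$). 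Thus $|F^\ast(G,T_{i,d})|=i^{dn}-\bigl|\bigcup_{l\le i,\,x\in M(d)} F_{l,x}\bigr|$, where $F_{l,x}$ is the set of maps $G\to T_{l,x}$, of size $l^{xn}$.

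The computation then closes by inclusion–exclusion over the index set $I\times M(d)$ with $I=\{w:w\le i\}$, using that an intersection $\bigcap F_{j_1,j_2}$ is the set of maps into $\bigcap T_{j_1,j_2}$, and the preceding Lemma identifies these intersections: $T_{i,d_1}\cap T_{i,d_2}=T_{i,\gcd(d_1,d_2)}$, $T_{2,d_1}\cap T_{i,d_2}=T_{2,\gcd(d_1,d_2)}$, and $T_{3,d_1}\cap T_{4,d_2}=T_{2,\gcd(d_1,d_2)}$. For $i=2$ the union is over $M(d)$ alone and inclusion–exclusion yields the stated $\alpha_{2,d}$. For $i=3$ one splits the union as $F_{2,d}\cup\bigcup_{y\in M(d)}F_{3,y}$, expands $|F_{2,d}\cup\bigcup F_{3,y}|=|F_{2,d}|+|\bigcup F_{3,y}|-|\bigcup_{y}(F_{2,d}\cap F_{3,y})|$, and uses $F_{2,d}\cap F_{3,y}$ = maps into $T_{2,y}$; the three resulting inclusion–exclusion sums give the formula for $\alpha_{3,d}$, and the case $i=4$ is identical word for word with $3$ replaced by $4$ in the first two terms and $|Aut|=2d$ in the denominator. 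The main obstacle is purely bookkeeping: correctly enumerating the maximal subalgebras (in particular that $T_{3,d}$ is \emph{not} below $T_{4,d}$, so that the lattice of maximal subalgebras of $T_{4,d}$ is $\{T_{2,d}\}\cup\{T_{4,x}:x\in M(d)\}$ rather than including a $T_3$ layer) and tracking which $\gcd$ appears in each term of the nested inclusion–exclusion; once the subalgebra intersection lemma is in hand, every step is a routine finite count.
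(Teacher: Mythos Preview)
Your proposal is correct and follows essentially the same route as the paper: product decomposition via semisimplicity and local finiteness, the identity $\alpha_{i,d}=|Epi(F_{{\cal C}_k}(n),T_{i,d})|/|Aut(T_{i,d})|$ with the automorphism counts $d,d,2d$, the bijection with generating maps $F^\ast(G,T_{i,d})$, and inclusion--exclusion over the maximal subalgebras determined by Lemma~\ref{sub} and the intersection lemma. Your handling of the $i=4$ case (excluding the $T_3$ layer) and the splitting $F_{2,d}\cup\bigcup_{y\in M(d)}F_{i,y}$ for $i=3,4$ match the paper's argument exactly.
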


\subsection{ Particular cases}

In this subsection, we will show that  Theorem \ref{tfal} has  particular  cases that was previously  obtained in the literature in \cite{NO1,AFO1}.

\subsubsection*{ $\mbox{\boldmath ${\cal C}_1$}$-algebras }

If $k=1$, the class of  ${\cal C}_1$-algebras coincides with the one of  $mpM$-algebras.  According to Theorem \ref{tfal}, we have that the free ${\cal C}_1$-algebra with $r$ generators is: 

$$F_{{\cal C}_1}(r) \simeq  T_{2,1}^{\alpha_{2,1}} \times  T_{3,1}^{\alpha_{3,1}} \times  T_{4,1}^{\alpha_{4,1}}.$$
Since $M(1)=\emptyset$ we have that  $\sum\limits_{Z\subseteq  M(1) } (-1)^{|Z|}\, (2^{gcd(Z)})^r=0$ and therefore  $\alpha_{2,1}=2^r$. Besides, it is possible to see that   

$$\sum\limits_{H\subseteq  M(1)\times M(1), H\not=\emptyset  } (-1)^{|H|} \, (2^{gcd(H_1\cup H_2)})^r =0, $$ 
$$ \sum\limits_{W\subseteq  M(d), W\not=\emptyset } (-1)^{|W|-1} \, (4^{gcd(W)})^r =0 = \sum\limits_{W\subseteq  M(d), W\not=\emptyset } (-1)^{|W|-1} \, (3^{gcd(W)})^r,$$
and thus, $\alpha_{3,1}=3^r-2^r$ and $\alpha_{4,1}=\frac{4^r-2^r}{2} = 2^{r-1}(2^r-1)$. Therefore,

$$ |F_{{\cal C}_1}(n)| =  2^{2^r} \times  3^{3^r-2^r} \times  4^{2^{r-1}(2^r-1)}.$$

This number was presented in  \cite[Theorem 4.5]{NO1}.

\subsubsection*{ $\mbox{\boldmath ${\cal C}_2$}$-algebras}

For the $k=2$, we have the class of ${\cal C}_2$-algebras coincides with the one of $\cal S$-algebras introduced in \cite{AFO1}. Then, the free ${\cal C}_2$-algebra with $r$ generators is:

$$F_{{\cal C}_2}(r)\simeq \prod \limits_{d=1}^{2} T_{2,d}^{\alpha_{2,d}} \times \prod \limits_{d=1}^{2} T_{3,d}^{\alpha_{3,d}} \times \prod \limits_{d=1}^{2} T_{4,d}^{\alpha_{4,d}}.$$

Since $|T_{i,d}|=i^d$, $\alpha_{2,2}=\frac{4^r-2^r}{2}$, $\alpha_{3,2}=\frac{9^r-3^r-4^r+2^r}{2}$ and $\alpha_{4,2}=\frac{4^{2r}-4^r-2^{2r}+2^r}{4}$, then 

$$|F_{{\cal C}_2}(r)|=  |T_{2,1}|^{2^r} \times |T_{2,2}|^\frac{4^r-2^r}{2}\times |T_{3,1}|^{3^r-2^r} \times |T_{3,2}|^\frac{3^{2r}-3^r-2^{2r}+2^r}{2} \times|T_{4,1}|^{\frac{4^r-2^r}{2}} \times |T_{4,2}|^\frac{4^{2r}-4^r-2^{2r}+2^r}{4}$$
\begin{eqnarray*}
&=& 2^{2^r} \times 4^\frac{4^r-2^r}{2} \times  3^{3^r-2^r} \times 9^\frac{3^{2r}-3^r-2^{2r}+2^r}{2} \times 4^{\frac{4^r-2^r}{2}}\times 16^\frac{4^{2r}-4^r-2^{2r}+2^r}{4}\\
&=& 2^{2^r} \times 4^{2\cdot\frac{4^r-2^r}{2}} \times 3^{3^r-2^r} \times 9^\frac{9^{r}-3^r-4^{r}+2^r}{2}\times 16^\frac{16^{r}-4^r-4^{r}+2^r}{4}\\
&=& 2^{2^r} \times 4^{2\cdot\frac{4^r-2^r}{2}} \times 3^{3^r-2^r} \times 9^\frac{9^{r}-3^r-4^{r}+2^r}{2}\times 16^\frac{16^{r}-3\cdot4^r+2\cdot2^r+ 4^r-2^r}{4}\\
&=& 2^{2^r} \times 4^{2\cdot\frac{4^r-2^r}{2}} \times  3^{3^r-2^r} \times 9^\frac{9^{r}-3^r-4^{r}+2^r}{2}\times 16^\frac{16^{r}-3\cdot4^r+2\cdot2^r}{4} \times 16^\frac{ 4^r-2^r}{4}\\
&=& 2^{2^r} \times 4^{3\cdot\frac{4^r-2^r}{2}} \times  3^{3^r-2^r} \times 9^\frac{9^{r}-3^r-4^{r}+2^r}{2}  \times 16^\frac{16^{r}-3\cdot4^r+2\cdot2^r}{4}.\\
\end{eqnarray*}

\vspace{-.5cm}
This number was obtained in \cite[Theorem 6.4]{AFO1}. 

\subsubsection*{ $\mbox{\boldmath ${\cal C}_k$}$-algebras with  $\mbox{\boldmath $k$}$ is prime number} 

\

$$|F_{{\cal C}_k}(r)| = \prod \limits_{N \in N_{2,d}, d/k} |T_{2,d}|^{|N_{2,d}|} \times \prod \limits_{N \in N_{3,d}, d/k} |T_{3,d}|^{|N_{3,d}|} \times \prod \limits_{N \in N_{4,d}, d/k} |T_{4,d}|^{|N_{4,d}|}  $$

$$ = \prod \limits_{ d/k} (2^d)^{\alpha_{2,d}} \times \prod \limits_{ d/k} (3^d)^{\alpha_{3,d}} \times \prod \limits_{d/k} (4^d)^{\alpha_{4,d}}  $$

$$ =  2^{\alpha_{2,1}} \times  (2^k)^{\alpha_{2,k}}  \times 3^{\alpha_{3,1}} \times  (3^k)^{\alpha_{3,k}} \times 4^{\alpha_{4,1}} \times  (4^k)^{\alpha_{4,k}}  $$

$$ =  2^{2^r} \times  (2^k)^{\frac{2^{kr}-2^r}{k} }  \times 3^{3^r-2^r} \times  (3^k)^{\frac{3^{kr}-3^r-2^{kr}+2^r}{k} } \times 4^{\frac{4^r-2^r}{2}} \times  (4^k)^{\frac{4^{kr}-4^r-2^{kr}+2^r}{2k}}  $$

\section{ Degree-preserving logic associated with the class ${\cal C}_k$-algebras}

In this section, we show that the class of ${\cal C}_k$-algebras can be seen as  suitable algebraic semantics for a logic system. There are many ways to associate a logic system with an ordered algebraic structure. For instance, it is possible to define the {\em logic that preserves degrees of truth} associated with the class of ${\cal C}_k$-algebras. This notion was introduced and studied mainly in the works \cite{bou-etal,L2,L3,L4} with the purpose of taking advantage of the multiple values of truth, more that the two classical values of truth, in opposition to the logics that only preserve truth. These logics preserve lower bounds of  truth values instead of just preserving  value $1$.

From now on, we shall denote by $\mathfrak{Fm}=\langle Fm, \wedge, \vee, \sim, ^\ast, t, \top, \bot \rangle$ the absolutely free algebra of
type (2,2,1,1,1,0) generated by some denumerable set of variables. We call $Fm$ the sentential formulas, and we shall refer to them
by lowercase greek letters $\alpha, \beta, \gamma, \dots$ and so on; and we shall denote the sets of formulas by uppercase greek
letters $\Gamma, \Delta,$ etc..\\[2mm] 
Let $\mathbb{L}_{k}^{\leq}=\langle Fm, \models_k^{\leq}\rangle$, for every $\Gamma\cup\{\alpha\}\subseteq Fm$ 

\begin{itemize}
\item[(i)] If $\Gamma=\{\alpha_1,\dots, \alpha_n\}\not=\emptyset$,

\begin{center}
\begin{tabular}{ccc}
$\alpha_1,\dots, \alpha_n \models_k^{\leq} \alpha$ & $\Longleftrightarrow$ &  $\forall (A,t)\in \mathbb{C}_k$, $\forall v\in Hom_{\mathbb{C}_k}(Fm,(A,t))$, $\forall a\in A$\\
& & if $v(\alpha_i)\geq a$, for all $i\leq n$, then $v(\alpha)\geq a$ \\
\end{tabular}
\end{center}

\item[(ii)] $\emptyset \models_k^{\leq} \alpha$ \,  $\Longleftrightarrow$  \, $\forall A\in \mathbb{C}_k$, $\forall v\in Hom_{\mathbb{C}_k}(Fm,A)$, $v(\alpha)=1$.

\item[(iii)] If $\Gamma\subseteq Fm$ is infinity,

\begin{center}
\begin{tabular}{ccc}
$\Gamma \models_k^{\leq} \alpha$ & $\Longleftrightarrow$ &  there are $\alpha_1,\dots, \alpha_n \in \Gamma$ such that \,  $\alpha_1,\dots, \alpha_n \models_k^{\leq} \alpha$.\\

\end{tabular}
\end{center}
\end{itemize}

\

From the above definition we have that  $\mathbb{L}_{k}^{\leq}$ is a sentential logic, that is, $\models_k^{\leq}$ is a finitary consequence relation on $Fm$. Besides: 

\begin{prop}\label{regla} Let \, $\alpha_1,\dots, \alpha_n, \alpha \in Fm$. Then, the following conditions are equivalent

\begin{itemize}
\item[\rm (i)] $\alpha_1,\dots, \alpha_n, \models_k^{\leq} \alpha$,
\item[\rm (ii)] the inequality $\alpha_1 \wedge \dots \wedge \alpha_n \preccurlyeq \alpha$ holds in the variety $\mathbb{C}_k$,

\item[\rm (iii)] for every $ (A,t)\in \mathbb{C}_k$ and every $h\in Hom_{\mathbb{C}_k}(Fm,(A,t))$, it is held that  $h(\alpha_1) \wedge \dots \wedge h(\alpha_n) \preccurlyeq h(\alpha)$.
\end{itemize}
\end{prop}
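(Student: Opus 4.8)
The plan is to establish the chain of equivalences (i) $\Leftrightarrow$ (iii) $\Leftrightarrow$ (ii), using that $\mathbb{C}_k$ is the variety of ${\cal C}_k$-algebras and that $\mathfrak{Fm}$ is the absolutely free algebra of the appropriate type. The implication (i) $\Rightarrow$ (iii) is the crux and requires exhibiting, for an \emph{arbitrary} algebra $(A,t)\in\mathbb{C}_k$, homomorphism $h$, and the specific element $a:=h(\alpha_1)\wedge\dots\wedge h(\alpha_n)\in A$, a witness for the definition of $\models_k^{\leq}$: taking $v=h$ and this $a$, we trivially have $h(\alpha_i)\geq a$ for every $i\leq n$ (since $a$ is the meet), so the defining clause (i) of $\models_k^{\leq}$ yields $h(\alpha)\geq a$, which is exactly (iii).

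For (iii) $\Rightarrow$ (i), I would simply note that (iii) is a \emph{stronger}-looking condition than (i) only apparently: given any $(A,t)\in\mathbb{C}_k$, any $v\in Hom_{\mathbb{C}_k}(Fm,(A,t))$, and any $a\in A$ with $v(\alpha_i)\geq a$ for all $i\leq n$, we get $a\leq v(\alpha_1)\wedge\dots\wedge v(\alpha_n)=v(\alpha_1\wedge\dots\wedge\alpha_n)\leq v(\alpha)$ by (iii), so $v(\alpha)\geq a$; hence (i) holds. Thus (i) and (iii) say the same thing, the passage between them being the standard trick that in the order-theoretic consequence the ``universally quantified lower bound $a$'' can always be specialized to the meet of the premises.

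For the equivalence with (ii): the inequality $\alpha_1\wedge\dots\wedge\alpha_n\preccurlyeq\alpha$ holding in the variety $\mathbb{C}_k$ means (by definition of validity of an inequality, or equivalently the equation $(\alpha_1\wedge\dots\wedge\alpha_n)\vee\alpha=\alpha$, or $(\alpha_1\wedge\dots\wedge\alpha_n)\wedge\alpha=\alpha_1\wedge\dots\wedge\alpha_n$) that for every $(A,t)\in\mathbb{C}_k$ and every assignment of the variables — equivalently, every homomorphism $h:\mathfrak{Fm}\to(A,t)$ by freeness — one has $h(\alpha_1)\wedge\dots\wedge h(\alpha_n)\leq h(\alpha)$. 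This is literally the statement (iii), so (ii) $\Leftrightarrow$ (iii) is just unwinding the definition of a variety satisfying an inequality together with the universal property of the absolutely free algebra $\mathfrak{Fm}$ (every map from the generators extends uniquely to a homomorphism, and conversely every homomorphism restricts to such a map).

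\textbf{Main obstacle.} There is no real obstacle here; the only point requiring a little care is making sure that in clause (i) of the definition of $\models_k^{\leq}$ the element $a$ ranges over \emph{all} of $A$ (not just, say, filter elements), so that the choice $a=h(\alpha_1)\wedge\dots\wedge h(\alpha_n)$ is legitimate — this is what makes the reduction to the meet go through. I would state the proof in three short paragraphs following exactly the three implications (i) $\Rightarrow$ (iii) $\Rightarrow$ (ii) $\Rightarrow$ (i), each only a couple of lines, invoking the universal property of $\mathfrak{Fm}$ for the translation between ``holds in the variety'' and ``holds under every homomorphism''.
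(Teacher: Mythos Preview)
Your proposal is correct and follows essentially the same approach as the paper: both rely on the observation that in a lattice the universal quantification over lower bounds $a$ can be collapsed to the single choice $a=h(\alpha_1)\wedge\dots\wedge h(\alpha_n)$, and that (ii) and (iii) coincide by the definition of an inequality holding in a variety. The only cosmetic difference is that the paper organizes the argument as the cycle (i)$\Rightarrow$(ii)$\Rightarrow$(iii)$\Rightarrow$(i) (declaring the last two steps immediate), whereas you split it as (i)$\Leftrightarrow$(iii) and (ii)$\Leftrightarrow$(iii); the underlying one-line lattice computation is identical.
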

\begin{dem} (i)$\Rightarrow$(ii): \, Suppose that $\alpha_1,\dots, \alpha_n \models_k^{\leq} \alpha$. Then, $\forall A\in \mathbb{C}_k$, $\forall v\in Hom_{\mathbb{C}_k}(Fm,A)$, $\forall a\in A$, if $v(\alpha_i)\geq a$, for all $i\leq n$, then $v(\alpha)\geq a$.
Since $A$ is a lattice, we have that $v(\alpha_i)\geq a$, for all $i\leq n$ \, iff \,  $\bigwedge_{i=1}^{n}v(\alpha_i)\geq a$ \, iff \, $v\left(\bigwedge_{i=1}^{n}(\alpha_i)\right)\geq a$.
So, if $v\left(\bigwedge_{i=1}^{n}(\alpha_i)\right)\geq a$, then $v(\alpha)\geq a$; that is,  $\alpha_1 \wedge\dots \wedge \alpha_n  \preccurlyeq \alpha$.  \\[2mm]
(ii)$\Rightarrow$ (iii) and (iii)$\Rightarrow$ (i): \, Immediate.
\end{dem}

\

The logics of formal inconsistency (LFI) was introduced by Carnielli and Marcos in \cite{CM}. These systems were invented with the intention to generalize the da Costa's idea to have paraconsistent logics with a unary operator that allows us to recover the classical theorems. Recall that a logic ${\cal L}=\langle Fm,\vdash_{\cal L}\rangle$ is said to be strong LFI if  the following conditions hold:
\begin{itemize}
\item[\rm (i)] if $p$ and $q$ are two different propositional variables then:
\begin{itemize}
\item[\rm (i.a)] $p, \sim p \not\vdash_{\cal L} q$,
\item[\rm (i.b)]  $\circ p, p \not\vdash_{\cal L}q$,
\item[\rm (i.c)]  $\circ p, \sim p \not\vdash_{\cal L} q$,
\end{itemize}  
\item[\rm (ii)]  $\circ \varphi,\varphi,\sim\varphi\vdash_{\cal L}\psi$ for every $\varphi$ and $\psi$. 
\end{itemize}

The condition (i.a) for sentences is typically used to define paraconsistent logic. On the other hand, the  notion of Logic Formal  Undeterminedness (LFU)  was introduced by Marcos in \cite{Marcos}, they are paracomplete systems with a unary operator that permit us to recover  the Excluded Middle. That is to say, if the following holds: 
\begin{itemize}

\item[\rm (i)] $\not\vdash_{\cal L} \varphi \lor \sim\varphi$, for some $\varphi$; i.e., ${\cal L}$ is paracomplete;

\item[\rm  (ii)] there is a formula $\varphi$ such that 

\begin{itemize}
\item[\rm  (ii.a)] $\circ_k\varphi \not\vdash_{\cal L}  \varphi$;
\item[\rm  (ii.b)] $\circ_k\varphi \not\vdash_{\cal L} \sim \varphi$;
\end{itemize}  
\item[\rm (iii)] $\circ_k\varphi \vdash_{\cal L}  \varphi \lor \sim\varphi$, for every $\varphi$.
\end{itemize}

 In the rest of the section, we will show that  $\mathbb{L}^\leq_{k}$ is LFI and LFU. Now, let us consider a unary  operator $\circ_k$ for  $\mathbb{L}_{k}^{\leq}$ defined as follows:

 
 $$\circ \alpha = \bigwedge\limits_{i=0}^{k} t^i((\sim \alpha \vee \alpha)\wedge (\alpha \wedge \sim \alpha)^*)$$
 
 Now, let us consider the Boolean set of a given ${\cal C}_k$-algebra $(A,t)$, $B(A)=\{x\in A: x\wedge \sim x=0\}$. Thus, it is easy to see the following Proposition.

\begin{prop}
Let $(A,t)$ be a ${\cal C}_k$-algebra, then $x\in B(A)$ iff $\circ x =1$.
\end{prop}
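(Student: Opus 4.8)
The plan is to prove the two implications separately, working directly with the definitions of $\circ$ and of the Boolean set $B(A)=\{x\in A: x\wedge\sim x=0\}$. The key structural fact I would use throughout is that $t$ is an automorphism of the $mpM$-algebra, so it commutes with $\wedge$, $\vee$, $\sim$ and $^\ast$, and fixes $0$ and $1$; also the De Morgan identity $(u\vee v)\wedge w$ distributes, and the behaviour of the pseudocomplement on Boolean elements (if $x\wedge\sim x=0$ then $x$ is a complemented, hence Boolean, element and $(x\wedge\sim x)^\ast=0^\ast=1$).

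First I would prove the easy direction: if $x\in B(A)$, then $x\wedge\sim x=0$, so $(x\wedge\sim x)^\ast = 0^\ast = 1$. Moreover $x\in B(A)$ means $x$ is a Boolean element, hence $\sim x\vee x = 1$ (the complement law, using that on an $mpM$-algebra a Boolean element has $\sim x$ as its complement; this can be extracted from (T4)-style reasoning or directly from $x\wedge\sim x=0$ together with pseudocomplementation giving $\sim x\leq x^\ast$ and the modal axiom (tm) $x\vee\sim x\leq x\vee x^\ast$, forcing $x\vee\sim x=x\vee x^\ast=1$ when $x$ is complemented). Therefore each conjunct $(\sim x\vee x)\wedge(x\wedge\sim x)^\ast = 1\wedge 1 = 1$, and since $t^i(1)=1$ for all $i$, we get $\circ x = \bigwedge_{i=0}^{k} 1 = 1$.

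For the converse, suppose $\circ x = 1$. Since $\circ x = \bigwedge_{i=0}^k t^i\big((\sim x\vee x)\wedge(x\wedge\sim x)^\ast\big) = 1$ and each factor lies below $1$, every factor must equal $1$; in particular the $i=0$ factor gives $(\sim x\vee x)\wedge(x\wedge\sim x)^\ast = 1$, whence both $\sim x\vee x = 1$ and $(x\wedge\sim x)^\ast = 1$. Now $(x\wedge\sim x)^\ast = 1$ means, by the defining property of the pseudocomplement ($a\wedge y=0$ iff $y\leq a^\ast$, applied with $a = x\wedge\sim x$ and $y=1$), that $(x\wedge\sim x)\wedge 1 = 0$, i.e. $x\wedge\sim x = 0$, which is exactly $x\in B(A)$. (In fact $\sim x\vee x=1$ is then automatic, but one does not even need it for this direction.) The main thing to be careful about — the only place that is not pure symbol-pushing — is justifying the step "$\bigwedge$ of things $\leq 1$ equals $1$ implies each is $1$" and the passage $(x\wedge\sim x)^\ast=1 \Rightarrow x\wedge\sim x=0$; the latter is immediate from the pseudocomplement law, since $z^\ast=1$ forces $z = z\wedge 1 = 0$. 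No deep machinery is needed; the proposition is essentially a direct computation once the Boolean-element characterization is invoked.
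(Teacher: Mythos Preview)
Your proof is correct and follows essentially the same direct-computation approach as the paper. The only cosmetic difference is in the converse direction: the paper extracts $\sim x\vee x=1$ from the $i=0$ conjunct and (implicitly) uses the De Morgan involution $\sim x\vee x=\sim(x\wedge\sim x)$ to conclude $x\wedge\sim x=\sim 1=0$, whereas you extract $(x\wedge\sim x)^\ast=1$ and use the pseudocomplement law to reach the same conclusion; both routes are valid and your write-up is in fact more complete than the paper's very terse argument.
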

\begin{proof}
Suppose that $x\in B(A)$, then  $\circ x=\bigwedge\limits_{i=0}^{k} t^i((\sim x \vee x)\wedge (x \wedge \sim x)^*)=1$. Conversely, let us suppose $\circ x =1$. Then, $\circ x= \bigwedge\limits_{i=0}^{k} t^i((\sim x \vee x)\wedge (x \wedge \sim x)^*)=1$ and so $\sim x \vee x=1$. 

\end{proof} 
 \begin{lem}
 $\mathbb{L}^\leq_{k}$ is a genuine paraconsistent logic. 
\end{lem}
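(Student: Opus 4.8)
The claim is that $\mathbb{L}^{\leq}_k$ is \emph{genuine} paraconsistent, meaning not only that the explosion rule fails for $\sim$ (i.e. $p,\sim p\not\models_k^{\leq} q$), but also that the other non-explosion conditions (i.a)--(i.c) from the definition of strong LFI hold, together with the gentle explosion (ii) involving $\circ$. The plan is to exhibit, for each of the finitely many failing entailments, a single ${\cal C}_k$-algebra from our list of generators together with a homomorphism witnessing the failure, and to verify (ii) by a direct computation using Proposition~\ref{regla} and the previous Proposition.

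First I would handle non-explosion. By Proposition~\ref{regla}, $p,\sim p\models_k^{\leq} q$ would mean the inequality $p\wedge\sim p\preccurlyeq q$ holds in the variety $\mathbb{C}_k$; since $p$ and $q$ are distinct variables this fails already in the simple algebra $(T_{3,k},t)$ (or even in $(T_3,t)$ viewed through a constant projection), taking a valuation with $v(p)=a$, so $v(p)\wedge v(\sim p)=a\wedge\sim a=a\neq 0=v(q)$. The same algebra and valuation, or the obvious variants, refute (i.b) $\circ p, p\not\models q$ and (i.c) $\circ p,\sim p\not\models q$: for (i.b) pick $v(p)$ Boolean and nonzero (so $\circ p$ evaluates to $1$ by the previous Proposition) and $v(q)=0$; for (i.c) pick $v(p)$ Boolean, hence $v(\sim p)$ Boolean, choose $v(p)=0$ so $v(\sim p)=1$ and $\circ p$ evaluates to $1$, while $v(q)=0$. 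In each case Proposition~\ref{regla}(ii) shows the corresponding inequality fails, hence the entailment fails.

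Next I would verify the gentle-explosion clause (ii): $\circ\varphi,\varphi,\sim\varphi\models_k^{\leq}\psi$ for all $\varphi,\psi$. By Proposition~\ref{regla} this reduces to checking the inequality $\circ\varphi\wedge\varphi\wedge\sim\varphi\preccurlyeq\psi$ in $\mathbb{C}_k$, and since the right side is arbitrary it suffices to show $\circ x\wedge x\wedge\sim x=0$ in every ${\cal C}_k$-algebra $(A,t)$. But if $\circ x\wedge x\wedge\sim x\neq 0$ in some $A$, then in particular $x\wedge\sim x\neq 0$; yet $\circ x\leq (x\wedge\sim x)^{\ast}$ (it is the meet of terms one of which, the $i=0$ term, contains $(x\wedge\sim x)^{\ast}$ as a conjunct), and by the pseudocomplement law $(x\wedge\sim x)\wedge(x\wedge\sim x)^{\ast}=0$, so $\circ x\wedge(x\wedge\sim x)\leq (x\wedge\sim x)^{\ast}\wedge(x\wedge\sim x)=0$, a contradiction. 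Hence the inequality holds and clause (ii) follows. Finally I would note that by Proposition~\ref{regla}(i)$\Leftrightarrow$(ii) this is genuinely an entailment of $\mathbb{L}^{\leq}_k$, and assembling the pieces gives that $\mathbb{L}^{\leq}_k$ is a genuine paraconsistent logic (in fact a strong LFI with respect to $\circ$).

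The only mildly delicate point is bookkeeping: making sure the \emph{same} phenomenon (existence of a non-Boolean fixed point $a$ with $a\wedge\sim a=a$, and of Boolean elements $0,1$) is available in the generating simple algebras $(T_{3,k},t)$ and $(T_{4,k},t)$, so that each witness valuation factors through a genuine ${\cal C}_k$-algebra homomorphism from $\mathfrak{Fm}$ — this is immediate since in those algebras the constant sequences give the required values and $t$ fixes them. I do not expect any real obstacle; the substance is entirely the pseudocomplement identity $\circ x\leq (x\wedge\sim x)^{\ast}$ combined with Proposition~\ref{regla}.
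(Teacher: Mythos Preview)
You have misidentified what ``genuine paraconsistent'' means here. In the paper (following B\'eziau, see the references \cite{Beziau2016} and \cite{OFP}), a logic is \emph{genuinely paraconsistent} when two things hold: (a) explosion fails, $p,\sim p\not\models_k^{\leq} q$; and (b) the law of non-contradiction fails as a theorem, $\not\models_k^{\leq}\sim(p\wedge\sim p)$. This is \emph{not} the same as being a strong LFI; the strong-LFI claim is a separate lemma (Lemma~\ref{LFI}) in the paper, proved immediately afterwards. Your proposal conflates the two notions and, as a result, you never establish (b): nowhere do you argue that $\sim(p\wedge\sim p)$ fails to be a tautology of $\mathbb{L}_k^{\leq}$.

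Your argument for (a) is fine in spirit (the paper uses $(T_4,\mathrm{Id})$ with $v(p)=a$, $v(q)=0$, where $\sim a=a$ so $v(p\wedge\sim p)=a>0=v(q)$; your choice of $T_3$ or $T_{3,k}$ with the constant sequence works equally well). What is missing is the second half: using the same valuation one has $v(\sim(p\wedge\sim p))=\sim a=a\neq 1$, so $\sim(p\wedge\sim p)$ is not a theorem. That one line is the substance of the lemma you were asked to prove. Everything you wrote about (i.b), (i.c), and the gentle-explosion clause (ii) is correct but belongs to the proof of Lemma~\ref{LFI}, not to this statement.
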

\begin{proof}
Let us consider ${\cal C}_k$-algebra $(T_4, Id)$ where $T_4$ is the simple $mpM-$algebra and $id$ is the identity function. To see that (i.a) holds, it is enough to take the homomorphism $v:Fm \to (T_4, Id)$ such that $v(p)=a=v(\sim p)$ and $v(q)=0$, then $v(p\wedge\sim p)>v(q)$. Using the same valuation, we have $\not\models_k^{\leq} \sim( p\wedge \sim p)$, see \cite{Beziau2016} and \cite{OFP}.
\end{proof}

\begin{lem}\label{LFI}
The logic $\mathbb{L}^\leq_{k}$ is a strong LFI with respect to $\sim$ and $\circ$. 
\end{lem}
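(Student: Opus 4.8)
The plan is to verify directly the defining conditions of a strong LFI, namely the three non-derivability clauses (i.a), (i.b), (i.c) together with the derivability clause (ii), working with the semantic consequence $\models_k^{\leq}$ and using Proposition~\ref{regla} to reduce everything to inequalities in the variety $\mathbb{C}_k$. The non-derivabilities will be witnessed by a single convenient algebra, the simple ${\cal C}_k$-algebra $(T_{4,k},t)$ (or $(T_4,\mathrm{Id})$ as in the previous Lemma), together with a valuation sending the relevant propositional variable to the fixed point $a$ with $a\wedge\sim a=a\neq 0$ and sending $q$ to $0$; for (i.b) and (i.c) one must additionally check that $\circ p$ does \emph{not} evaluate to something $\geq v(q)=0$ is automatic, so the real content is that $\circ p$ evaluates to an element which together with $p$ (resp. $\sim p$) still fails to bound $v(q)$ from below — but since $v(q)=0$ this is trivial unless one instead picks $q$ a variable and $v(q)$ a non-zero element incomparable with $v(\circ p)\wedge v(p)$; so I will instead use the standard trick: take $v(\circ p)=1$ impossible here, so rather exploit that in $T_4$ with $v(p)=a$ we have $v(p\wedge\sim p)=a$, $v((p\wedge\sim p)^\ast)=a^\ast=0$ wait — recompute: in $T_4$, $a^\ast=a$, hence $v(\circ p)=\bigwedge_i t^i((\sim a\vee a)\wedge(a\wedge\sim a)^\ast)=\bigwedge_i t^i(a\wedge a)=\bigwedge_i t^i(a)$, which in $(T_4,\mathrm{Id})$ equals $a\neq 1$; then choosing $v(q)=b$ (the other fixed point, incomparable with $a$) makes $v(\circ p)=a\not\geq b$ and $v(p)=a\not\geq b$, giving (i.b), and similarly for (i.c) since $v(\sim p)=a$.

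Next I would establish clause (ii), $\circ\varphi,\varphi,\sim\varphi\models_k^{\leq}\psi$: by Proposition~\ref{regla} this amounts to showing the inequality $\circ\varphi\wedge\varphi\wedge\sim\varphi\preccurlyeq\psi$ holds in every ${\cal C}_k$-algebra, and since $\psi$ is arbitrary it suffices to prove $\circ x\wedge x\wedge\sim x=0$ identically. For this I would compute $\circ x\wedge(x\wedge\sim x)$. Since $\circ x\leq t^0((\sim x\vee x)\wedge(x\wedge\sim x)^\ast)=(\sim x\vee x)\wedge(x\wedge\sim x)^\ast\leq(x\wedge\sim x)^\ast$, and by the very definition of pseudocomplement $(x\wedge\sim x)^\ast\wedge(x\wedge\sim x)=0$, we get $\circ x\wedge x\wedge\sim x\leq(x\wedge\sim x)^\ast\wedge(x\wedge\sim x)=0$, as required. (Alternatively one can invoke the Proposition just proved: $\circ x=1$ iff $x\in B(A)$, so if $\circ x\wedge x\wedge\sim x$ were to be realized with value $\geq$ something nonzero on a subalgebra, $x\wedge\sim x$ would have to be $0$; but the direct inequality above is cleaner and works uniformly.)

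Assembling the pieces: the three non-derivabilities follow from the explicit countermodel $(T_4,\mathrm{Id})$ (which is a ${\cal C}_k$-algebra for every $k$, being $1$-periodic with $d=1\mid k$, as guaranteed by Theorem~\ref{subsimples}) with the valuation described, and the derivability (ii) follows from the identity $\circ x\wedge x\wedge\sim x=0$ via Proposition~\ref{regla}. The main obstacle, such as it is, is bookkeeping in clauses (i.b) and (i.c): one must be careful to choose the value of the ``conclusion'' variable $q$ so that it is genuinely not below $v(\circ p)\wedge v(p)$ (resp. $v(\circ p)\wedge v(\sim p)$), which forces the use of the two incomparable fixed points $a,b$ of $T_4$ rather than the value $0$; everything else is a short computation inside the four-element De~Morgan algebra and an appeal to the pseudocomplement law. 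I would close by remarking that the same algebra and valuation simultaneously witness that $\mathbb{L}_k^{\leq}$ is paracomplete, which is what Lemma~\ref{LFI}'s companion statement about $\mathbb{L}_k^{\leq}$ being an LFU will need.
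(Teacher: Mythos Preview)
Your argument for clause (ii) is correct and in fact cleaner than the paper's: you observe directly that $\circ x\leq(x\wedge\sim x)^\ast$ and apply the pseudocomplement law, whereas the paper checks the identity $((\sim x\vee x)\wedge(x\wedge\sim x)^\ast)\wedge x\wedge\sim x=0$ on the generating $mpM$-algebras and then transports it to every ${\cal C}_k$-algebra. Your treatment of (i.a) is also fine.

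The genuine gap is in (i.b) and (i.c), and it comes from a miscomputation of the pseudocomplement in $T_4$. In the paper's $T_4$ one has $a^\ast=b$ and $b^\ast=a$, not $a^\ast=a$. With $v(p)=a$ this gives
\[
(\sim a\vee a)\wedge(a\wedge\sim a)^\ast \;=\; a\wedge a^\ast \;=\; a\wedge b \;=\; 0,
\]
so $v(\circ p)=0$, and hence $v(\circ p)\wedge v(p)=0\leq v(q)$ for \emph{every} choice of $v(q)$. Your intended countermodel therefore collapses, and the ``use $v(q)=b$'' manoeuvre cannot save it.

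The paper sidesteps this by sending the first variable to a Boolean value instead of a fixed point: for (i.b) take $v(p)=1$ (the paper writes $\top$), so that $v(\circ p)=1$ and $v(\circ p\wedge p)=1\not\leq 0=v(q)$; for (i.c) take $v(p)=0$, so that $v(\sim p)=1$, $v(\circ p)=1$, and again $1\not\leq 0=v(q)$. Once you replace your countermodel for (i.b) and (i.c) by these Boolean-valued ones (still in $(T_4,\mathrm{Id})$, or indeed in any nontrivial ${\cal C}_k$-algebra), the rest of your write-up goes through unchanged.
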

\begin{proof}
Let us consider ${\cal C}_k$-algebra $(T_4, Id)$ where $T_4$ is the simple $mpM-$algebra and $id$ is the identity function. To see that (i.a) holds, it is enough to take the homomorphism $v:Fm \to (T_4, Id)$ such that $v(p)=a=v(\sim p)$ and $v(q)=0$, then $v(p\wedge\sim p)>v(q)$. For the case (i.b), we get a function $v(\circ \top)=1$ $v(\top)=1$ and $v(q)=0$, then $v(\circ \top\wedge\top)>v(q)$. For (i.c), we get a function $v(\circ \bot)=1$ $v(\sim \bot)=1$ and $v(q)=0$, then $v(\circ \bot \wedge \sim\bot)>v(q)$

Now, for the case (ii), we can observe that the identity $((\sim x \vee x)\wedge (x \wedge \sim x)^*)\wedge x\wedge \sim x=0$ holds in $mpM-$ algebras $T_2$ and $T_4$. Then, since the class of $mpM-$algebra is a variety and the identity is in the language of $mpM-$algebra, we can affirm that the identity hold in every ${\cal C}_k$-algebra $(A,t)$,  then $v(\circ p \wedge p\, \wedge \sim p)\leq v(q)$ for every variable $p$ and $q$, which complete the proof.
\end{proof}

\begin{lem}\label{LFU}
The logic  $\mathbb{L}^\leq_{k}$ is an LFU with respect to $\sim$ and $\circ$. 
\end{lem}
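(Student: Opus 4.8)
The plan is to mirror the structure of the previous Lemma (Lemma \ref{LFI}), verifying the three defining conditions of an LFU for $\mathbb{L}^\leq_k$ with respect to $\sim$ and $\circ_k$. For each condition I would exhibit a suitable ${\cal C}_k$-algebra together with a homomorphism $v\colon \mathfrak{Fm}\to (A,t)$ that witnesses the required (non)entailment, invoking Proposition \ref{regla} to reduce each claim to an inequality in the variety $\mathbb{C}_k$.

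For condition (i), that $\mathbb{L}^\leq_k$ is paracomplete, I would work in the simple $mpM$-algebra $T_3=\{0,a,1\}$ (equipped with the identity automorphism, which is a legitimate ${\cal C}_k$-algebra since $\mathrm{Id}^k=\mathrm{Id}$) and take $v(p)=a$. Then $v(p\vee\sim p)=a\vee a=a\neq 1$, so by Proposition \ref{regla} we get $\not\models_k^{\leq} p\vee\sim p$. For condition (iii), that $\circ_k\varphi\models_k^{\leq}\varphi\vee\sim\varphi$ for every $\varphi$, I would argue as in the (ii)-part of Lemma \ref{LFI}: one checks that the identity $\bigl((\sim x\vee x)\wedge(x\wedge\sim x)^\ast\bigr)\wedge \bigl(\text{everything}\bigr)\leq x\vee\sim x$ — more precisely that $\circ_k x\leq x\vee\sim x$ — holds in each of $T_2,T_3,T_4$; since these generate the variety and the identity is in the language, it holds in every ${\cal C}_k$-algebra, and Proposition \ref{regla} yields the entailment. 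Concretely, $\circ_k x=1$ forces $x\in B(A)$ by the preceding Proposition, hence $x\vee\sim x=1$, while if $\circ_k x\neq 1$ the bound $\circ_k x\leq (\sim x\vee x)$ (taking the $i=0$ term of the meet) already gives it.

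For condition (ii), the non-entailments $\circ_k\varphi\not\models_k^{\leq}\varphi$ and $\circ_k\varphi\not\models_k^{\leq}\sim\varphi$, I would again use $(T_4,\mathrm{Id})$ (or $(T_3,\mathrm{Id})$): choosing $v(p)=b$ gives $v(p)\wedge\sim v(p)=b\wedge a=0$, so $v(p)\in B(T_4)$ and hence $v(\circ_k p)=1$; but $v(p)=b\neq 1$ witnesses $\circ_k p\not\models_k^{\leq}p$, and $v(\sim p)=a\neq 1$ witnesses $\circ_k p\not\models_k^{\leq}\sim p$. This also reconfirms (ii.a) and (ii.b) with a single valuation.

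The only genuinely delicate point is the verification underlying condition (iii): one must confirm that the inequality $\circ_k x\leq x\vee\sim x$ truly holds in $T_2$, $T_3$ and $T_4$ — this is the same kind of finite case-check performed in Lemma \ref{LFI} for condition (ii) — and then appeal to the fact (established in Section 4) that $T_2$, $T_3$, $T_4$ and their cyclic powers generate the variety ${\cal C}_k$, so that an identity in the $mpM$-language valid on all of them is valid throughout $\mathbb{C}_k$. Once that is in place, Proposition \ref{regla} converts it into the required consequence $\circ_k\varphi\models_k^{\leq}\varphi\vee\sim\varphi$, and the lemma follows. $\Box$
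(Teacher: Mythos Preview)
Your overall plan---checking paracompleteness via a counterexample, establishing (iii) from the inequality $\circ_k x\leq x\vee\sim x$, and finding witnesses for (ii.a)--(ii.b)---matches the paper's proof. Your argument for (iii), reducing to the $i=0$ conjunct of $\circ_k x$, is a bit more explicit than the paper's one-line appeal to ``$\circ x\leq x\vee\sim x$'', and your use of $(T_3,\mathrm{Id})$ for (i) works just as well as the paper's $(T_4,\mathrm{Id})$.

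There is, however, a concrete error in your treatment of (ii). In the paper's $T_4$ one has $\sim a=a$ and $\sim b=b$ (see the definition in Section~\ref{preli}: ``$\sim b=a^{\ast}=b$, $\sim a=b^{\ast}=a$''), not $\sim b=a$. Hence with $v(p)=b$ you get $v(p)\wedge\sim v(p)=b\wedge b=b\neq 0$, so $v(p)\notin B(T_4)$; in fact $v(\circ_k p)=(b\vee b)\wedge(b\wedge b)^{\ast}=b\wedge a=0$, which cannot witness $\circ_k p\not\models_k^{\leq}p$. The same obstruction occurs in $(T_3,\mathrm{Id})$, since $\circ_k a=a\wedge a^{\ast}=0$ there too. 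The paper avoids this by using the constants: $\circ\bot\not\models_k^{\leq}\bot$ for (ii.a) and $\circ\top\not\models_k^{\leq}\sim\top$ for (ii.b). Equivalently, you may keep $\varphi=p$ and use two valuations, $v(p)=0$ for (ii.a) and $v(p)=1$ for (ii.b); either repair completes your argument.
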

\begin{proof}
Taking the same simple algebra $(T_4, Id)$ of Lemma \ref{LFI} and $v(p)=a=v(\sim p)$, we have that $\not\models^{\leq}_{k} p \vee \sim p$. Now, taking $\circ \bot \not\models^{\leq}_{k} \bot$ and $\circ \top \not\models^{\leq}_{k} \sim \top$. To see that $\circ \alpha\models^{\leq}_{k} \alpha \vee \sim \alpha$, it is enough to observe that the following algebraic property  $\circ x  \leq x\vee \sim x$ hold on every ${\cal C}_k$-algebra.
\end{proof}

\

Next, we will see that the operator $\circ$ enjoys the {\em propagation property} in the logic $\mathbb{L}_{k}$. That is to say:

\begin{lem}\label{PP}
In the logic $\mathbb{L}^\leq_{k}$ the following holds:

\begin{itemize}
\item[\rm (i)] $\models_k^{\leq}\circ \bot$,
\item[\rm (ii)] $\circ \alpha\models_k^{\leq}\circ \#_1 \alpha$ where $\#_1\in \{\sim, ^*, t\}$,
\item[\rm (iii)] $\circ \alpha, \circ\beta\models_k^{\leq}\circ (\alpha \# \beta)$ where $\#\in \{\wedge,\vee\}$.
\end{itemize} 
\end{lem}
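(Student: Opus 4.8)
The plan is to reduce each item to an algebraic identity or inequality in the variety $\mathbb{C}_k$ and verify it using Proposition \ref{regla}, which tells us that $\alpha_1,\dots,\alpha_n\models_k^\leq\alpha$ iff $\alpha_1\wedge\dots\wedge\alpha_n\preccurlyeq\alpha$ holds in $\mathbb{C}_k$; and since $\mathbb{C}_k$ is generated by the simple algebras $(T_{i,k},t)$ with $i=3,4$ and their subalgebras (Theorem \ref{cklocfini}), it is enough to check these facts in the finite algebras $T_2,T_3,T_4$ (with any admissible $t$), or directly from the $mpM$-identities and the properties (T1)--(T18) together with the fact that $t$ is an automorphism commuting with $\wedge,\vee,\sim,{}^\ast,\triangle,\nabla$.

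For (i), I would note $\bot$ evaluates to $0$ and $\sim 0=1$, $0\wedge\sim 0=0$, $(0\wedge\sim 0)^\ast=0^\ast=1$, so each summand $t^i((\sim\bot\vee\bot)\wedge(\bot\wedge\sim\bot)^\ast)$ equals $t^i(1)=1$; hence $\circ\bot=1$ in every $\mathbb{C}_k$-algebra, i.e. $\models_k^\leq\circ\bot$. For (ii) the key observation is that $\circ\alpha=1$ iff $v(\alpha)\in B(A)$ (the Proposition just proved), so it suffices to check that the Boolean set $B(A)$ is closed under $\sim$, ${}^\ast$ and $t$: closure under $\sim$ is immediate since $x\wedge\sim x=0$ is symmetric; closure under $t$ holds because $t$ is an automorphism, so $tx\wedge\sim tx=t(x\wedge\sim x)=t(0)=0$; closure under ${}^\ast$ follows because in an $mpM$-algebra $x^\ast$ is always a Boolean element (indeed $x^\ast\wedge x^{\ast\ast}=0$, or one uses that $x^\ast=\sim\triangle\nabla x$-type expressions land in the Stone part). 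Thus $\circ\alpha\models_k^\leq\circ\#_1\alpha$. Equivalently, one can exhibit for each $\#_1$ the algebraic inequality $\circ x\leq\circ(\#_1 x)$ and verify it on $T_2,T_3,T_4$.

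For (iii) the same strategy applies: $B(A)$ is a subalgebra for the lattice operations — this is essentially Proposition \ref{C3Prop}(b) applied to the Boolean skeleton, or the elementary fact that in any distributive pseudocomplemented De Morgan algebra the complemented elements form a Boolean sublattice — so if $v(\alpha),v(\beta)\in B(A)$ then $v(\alpha)\wedge v(\beta),v(\alpha)\vee v(\beta)\in B(A)$, whence $\circ(\alpha\#\beta)=1$. Concretely I would record the identities $\circ x\wedge\circ y\leq\circ(x\wedge y)$ and $\circ x\wedge\circ y\leq\circ(x\vee y)$ and check them in the three generating algebras (a short finite case analysis, since on Boolean elements everything is classical and on the non-Boolean elements $a$ of $T_3,T_4$ one has $\circ a=0$, trivializing the left side).

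The main obstacle — such as it is — is purely bookkeeping: making sure the summation index $i$ ranging from $0$ to $k$ in the definition of $\circ$ (one more than strictly needed, since $t^k=t^0=\mathrm{id}$) causes no off-by-one trouble, and being careful that the relevant closure property of $B(A)$ under ${}^\ast$ genuinely holds in all $mpM$-algebras rather than just in $T_2$ and $T_4$ — here one should invoke that $T_2,T_3,T_4$ are the only subdirectly irreducible $mpM$-algebras and check ${}^\ast$-closure of $B$ on each (in $T_3$, $B(T_3)=\{0,1\}$ and $a^\ast=0$, so it holds trivially). No step requires genuinely new ideas beyond Proposition \ref{regla} and the semisimple/finitely-generated structure established earlier.
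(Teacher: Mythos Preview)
Your ``concrete'' backup—verifying the inequalities $\circ x\leq\circ(\#_1x)$ and $\circ x\wedge\circ y\leq\circ(x\#y)$ on the generating algebras via Proposition~\ref{regla}—is correct and is essentially the paper's route: the paper checks the analogous facts for the inner $mpM$-term $c(x)=(\sim x\vee x)\wedge(x\wedge\sim x)^\ast$ on $T_3,T_4$, then uses that $t$ is a homomorphism (so $t^ic(x)=c(t^ix)$) to pass from $c$ to $\circ=\bigwedge_i t^i c$.

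Your \emph{primary} argument, however, has two problems. First, closure of $B(A)$ under $\#_1$ only gives the implication ``$\circ x=1\Rightarrow\circ(\#_1x)=1$'', i.e.\ the $1$-assertional consequence $\models_k$, not the degree-preserving $\models_k^\leq$. The word ``equivalently'' is not justified. To upgrade the implication to the inequality $\circ x\leq\circ(\#_1x)$ you need the extra observation (which you only hint at in (iii)) that in every \emph{simple} $\mathbb{C}_k$-algebra the term $\circ x$ already lies in $\{0,1\}$: indeed $\circ x\in K(A)$ (it is $t$-invariant by re-indexing and lands in the Boolean skeleton since $c$ does), and $K(A)=\{0,1\}$ for simple $A$ by Theorem~\ref{cap3simple}. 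With that in hand, the implication and the inequality coincide on simple algebras, and semisimplicity finishes the job.

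Second, your specific justification for the case $\#_1={}^\ast$ is wrong: the claim ``in an $mpM$-algebra $x^\ast$ is always a Boolean element'' fails in $T_4$, where $a^\ast=b$ and $b\wedge\sim b=b\neq 0$, so $b\notin B(T_4)$. The identity $x^\ast\wedge x^{\ast\ast}=0$ says $x^\ast$ is complemented in the pseudocomplementation sense, which is not the condition $x^\ast\wedge\sim x^\ast=0$ defining $B(A)$. What \emph{is} true (and suffices) is the weaker statement that $B(A)$ is closed under ${}^\ast$—trivially, since $B(T_3)=B(T_4)=\{0,1\}$—or, matching the paper's style, that $c(x)\leq c(x^\ast)$ holds in $T_3,T_4$. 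Note this is only an inequality: in $T_3$ one has $c(a)=0<1=c(0)=c(a^\ast)$.
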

\begin{proof}
It is not hard to see that the algebraic equations $\circ( 0)=1$, $ ((x\vee \sim x)\wedge (x\wedge \sim x)^*) \wedge  ((y\vee \sim y)\wedge (y\wedge \sim y)^*) \leq ((x \# y)\vee \sim (x \# y))\wedge ((x \# y)\wedge \sim x (\# y))^* $ (where $\#\in \{\wedge,\vee\}$), $\circ x = \circ (\sim x)$ and $\circ(x)=((x\vee \sim x)\wedge (x\wedge \sim x)^*) = ((x^*\vee \sim x^*)\wedge (x^*\wedge \sim x^*)^*)=\circ(x^*)$ hold in $T_3$ and $T_4$. Therefore, they are held in every $mpM-$algebra. So, taking in  mind the definition of $\circ$ and that $t$ is a homomorphism, we have that $\circ 1=1$, $\circ x \wedge  \circ y \leq \circ (x \# y)$ (where $\#\in \{\wedge,\vee\}$) and $\circ x = \circ (\#_1 x)$ (with $\#_1 \in \{\sim, ^*, t\}$) are held in every ${\cal C}_k$-algebra.
\end{proof}

\

For every positive integer $k$, we have that $\mathbb{L}^\leq_{k}$ is LFI and LFU, the same phenomenon occurs in the logics studied in \cite{CR19} and \cite{EFFG}, see also \cite{DS}.

\subsection{ $\mathbb{L}_{k}^{\leq}$ as a generalized matrix logic }

Recall that a {\em matrix logic} is a pair $\langle A, F\rangle$ where $A$ is an algebra and $F$ is a non-empty set of $A$. On the other hand, a {\em generalized matrix} is a pair $\langle A, {\cal C}\rangle$ where $A$ is an algebra and $\cal C$ is a family of subsets of $A$ that verify the following:

\begin{itemize}
\item[(i)] $A\in {\cal C}$,
\item[(ii)] If $\{X_i\}_{i\in I}$ then $\bigcup_{i\in I} X_i\in {\cal C}$.
\end{itemize}

Now, let us consider the matrix $M=\langle A, F\rangle$. Then the logic $\mathbb{L}^{k}_M=\langle Fm, \models_M\rangle$ is determined by $M$ in the following way:

let $\Gamma\cup\{\alpha\}$ a subset of $Fm$

\begin{center}
\begin{tabular}{ccc}
$\Gamma \models_M \alpha$ & $\Longleftrightarrow$ &   $\forall v\in Hom(Fm,A)$, \\
& & if $v(\gamma)\in F$, for all $\gamma\in \Gamma$, then $v(\alpha)\in F$ \\
\end{tabular}
\end{center}

If ${\cal M}=\{M_i\}_{i\in I}$ is a family of matrices, then the logic  $\mathbb{L}^{k}_{\cal M}=\langle Fm, \models_{\cal M}\rangle$  determined by the family $\cal M$ is defined by $\models_{\cal M}=\bigcap_{i\in I} \models_{M_i}$.

Let us consider now the logic determined by generalized matrix ${\cal G}=\langle A, {\cal C}\rangle$ that we denote 
 $\mathbb{L}^{k}_{\cal G}=\langle Fm, \models_{\cal G}\rangle$ and it is defined by the following:  let $\Gamma\cup\{\alpha\}$ a subset of $Fm$

\begin{center}
\begin{tabular}{ccc}
$\Gamma \models_{\cal G} \alpha$ & $\Longleftrightarrow$ &   $\forall v\in Hom(Fm,A)$, $\forall G\in \cal C$ \\
& & if $v(\gamma)\in G$, for all $\gamma\in \Gamma$, then $v(\alpha)\in G$ \\
\end{tabular}
\end{center}

In this case, we take $${\cal M}=\{\langle A, F\rangle: A\in {\mathbb{C}_k},\,\,\text{and}\,\, F\,\,\text{is a lattice filter of}\,\, A\} $$

and 
$${\cal G}=\{\langle A, {\cal F}_i(A)\rangle: A\in {\mathbb{C}_k}\}$$

where ${\cal F}_i(A)$ is the family of all filters of $A$. Then, we have the following Lemma:

\begin{lem}
The logic $\mathbb{L}_{k}^{\leq}$ coincides with both logics $\mathbb{L}^{k}_{\cal M}$ and $\mathbb{L}^{k}_{\cal G}$.
\end{lem}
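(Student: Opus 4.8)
The plan is to prove a chain of inclusions $\models_k^{\leq}\;\supseteq\;\models_{\cal G}\;=\;\models_{\cal M}\;\supseteq\;\models_k^{\leq}$, so that all three relations coincide. The intuition is straightforward: the degree-preserving consequence is exactly what one obtains by interpreting ``$v(\alpha_i)\geq a$ for all $i$'' as ``all the $v(\alpha_i)$ lie in the principal (lattice) filter $[a)$'', and every principal filter is a lattice filter, while conversely every lattice filter is a union of principal filters; the generalized-matrix reading packages this family behaviour, and the plain-matrix reading (intersected over all filters) recovers the same thing.

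First I would handle $\mathbb{L}^k_{\cal M}=\mathbb{L}^k_{\cal G}$. Since each ${\cal F}_i(A)$ satisfies the closure conditions required of a generalized matrix (the whole algebra $A$ is a filter, and an arbitrary union of lattice filters — read as the filter generated by the union, or more simply here as the set-theoretic union, taking care that the definition of $\models_{\cal G}$ only needs ``$v(\gamma)\in G$ for one common $G$'') one sees that $\Gamma\models_{\cal G}\alpha$ quantifies over exactly the same pairs $\langle A,F\rangle$ with $F$ a lattice filter of $A\in\mathbb{C}_k$ as $\models_{\cal M}$ does; the two definitions are literally the same universally quantified statement once ${\cal C}={\cal F}_i(A)$. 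I would spell this out in one or two lines.

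Next, the inclusion $\models_k^{\leq}\;\subseteq\;\models_{\cal M}$: suppose $\alpha_1,\dots,\alpha_n\models_k^{\leq}\alpha$; by Proposition~\ref{regla} this means the inequality $\alpha_1\wedge\dots\wedge\alpha_n\preccurlyeq\alpha$ holds in $\mathbb{C}_k$. Take any $\langle A,F\rangle$ with $A\in\mathbb{C}_k$, $F$ a lattice filter, and any $v\in Hom(Fm,A)$ with $v(\alpha_i)\in F$ for all $i$; since $F$ is closed under $\wedge$ we get $\bigwedge_i v(\alpha_i)\in F$, and since $F$ is an up-set and $\bigwedge_i v(\alpha_i)\leq v(\alpha)$ we conclude $v(\alpha)\in F$. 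For the case $\Gamma=\emptyset$ one uses instead that $\emptyset\models_k^{\leq}\alpha$ forces $v(\alpha)=1\in F$ for every filter $F$; the infinite-$\Gamma$ case reduces to the finite one by finitarity of both relations.

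Finally, the reverse inclusion $\models_{\cal M}\;\subseteq\;\models_k^{\leq}$ (equivalently $\models_{\cal G}\subseteq\models_k^{\leq}$): given $\alpha_1,\dots,\alpha_n\models_{\cal M}\alpha$, fix $(A,t)\in\mathbb{C}_k$, $v\in Hom_{\mathbb{C}_k}(Fm,A)$ and $a\in A$ with $v(\alpha_i)\geq a$ for all $i$; apply the matrix hypothesis to the principal lattice filter $F=[a)=\{x\in A: x\geq a\}$, which indeed is a lattice filter of $A$. Then each $v(\alpha_i)\in[a)$, hence $v(\alpha)\in[a)$, i.e. $v(\alpha)\geq a$; this is exactly the condition defining $\models_k^{\leq}$. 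The $\emptyset$ and infinite cases are handled as before. The only subtlety — and the step I would be most careful about — is making sure the closure condition ``union of members of $\cal C$ lies in $\cal C$'' is stated and used consistently with the definition of $\models_{\cal G}$ (the family ${\cal F}_i(A)$ of \emph{all} lattice filters is closed under arbitrary unions only if one reads ``union'' as ``filter generated by the union''; but this does not affect the argument, since $\models_{\cal G}$ only ever tests membership in a single $G$ at a time, and principal filters already suffice to recover $\models_k^{\leq}$). Assembling the three inclusions with the identity $\models_{\cal M}=\models_{\cal G}$ yields $\models_k^{\leq}=\models_{\cal M}=\models_{\cal G}$, which is the claim.
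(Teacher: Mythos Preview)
Your argument is correct and follows essentially the same route as the paper: both identify $\models_{\cal M}$ with $\models_{\cal G}$ directly from the definitions, use Proposition~\ref{regla} plus closure of filters under meets and upward-closure for the inclusion $\models_k^{\leq}\subseteq\models_{\cal M}$, and instantiate a suitable filter for the converse (you take the principal filter $[a)$, the paper takes the filter generated by $\{v(\alpha_1),\dots,v(\alpha_n)\}$, which amounts to the same thing). The one point where the paper is more explicit is the finitarity of $\models_{\cal M}$: you invoke it (``by finitarity of both relations'') but do not justify it, whereas the paper argues that the class ${\cal M}$ is closed under ultraproducts because the notion of lattice filter is elementarily definable, so the associated consequence relation is finitary; you should add that line, since without it the infinite-$\Gamma$ case of $\models_{\cal M}\subseteq\models_k^{\leq}$ is not fully covered.
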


\begin{proof}
The fact that the logics $\mathbb{L}^{k}_{\cal M}$ and $\mathbb{L}^{k}_{\cal G}$ coincide follows immediately from their very definition of them. On the other hand, in  the variety $\mathbb{C}_k$ the notion of filter for every algebra is elementary definable, then the class ${\cal M}$ is closed under ultraproducts. Therefore, $\mathbb{L}^{k}_{\cal M}$ is finitary.

Now, we will see that the systems $\mathbb{L}_{k}^{\leq}$ and  $\mathbb{L}^{k}_{\cal G}$ coincide; that is to say, $\models_{k}^{\leq}=\models_{\cal G}$. Since $\models_{\cal G}$ is finitary, we make the proof using a finite set of premises. Indeed, let $\alpha_1,\cdots, \alpha_n, \alpha$ be a set of formulas such that $\alpha_1,\cdots, \alpha_n \models_{k}^{\leq} \alpha$. According to Proposition \ref{regla}, for every  $v\in Hom_{\mathbb{C}_k}(Fm,A)$ we have that $v(\alpha_1)\wedge \cdots \wedge v(\alpha_n)\leq v(\alpha)$. Let us consider now $F\in {\cal F}_i(A)$ and suppose that $v(\alpha_1), \cdots, v(\alpha_n)\in F$. Since $F$ is a filter and taking in mind the definition of $\mathbb{L}^{k}_{\cal G}$, we have that $\alpha_1,\cdots, \alpha_n \models_{\cal G} \alpha$.

Conversely, suppose that $\alpha_1,\cdots, \alpha_n \models_{\cal G} \alpha$ and let  $v\in Hom_{\mathbb{C}_k}(Fm,A)$. Now, taking a filter generated $G$ by the set $\{ v(\alpha_1),\cdots, v(\alpha_n)\}$, we have that $v(\alpha_i)\in G$ with $i=1,\cdots,n$. Then, $v(\alpha)\in G$. Again taking into account Proposition \ref{regla}, we obtain that $\alpha_1,\cdots, \alpha_n \models_{k}^{\leq} \alpha$ as desired.
\end{proof}

\subsection{Logic $\mathbb{L}_{k}^{\leq}$ location   inside the hierarchy of Abstract algebraic logic } 

The theory of {\em algebrizable logics} was initiated by Blok and Pigozzi in \cite{BP}, in that  work the notion of {\em Leibniz operator} to characterize them was introduced. Other weak notions of algebrizable logic can be characterized by the study of  Leibniz operator.  Now, we will summarize the main background to develop the rest of the section.

If $\langle {\bf A}, F\rangle$ is an arbitrary matrix, its {\em Leibniz congruence} ${\bf \Omega_A} F$ is the largest of all congruences of $\bf A$ that are compatible with $F$ in the following sense:

$${\bf \Omega_A} F:=\{\Theta\in {\rm Co}{\bf A}: \text{if}\,\, (a,b)\in \Theta\,\, \text{and}\,\, a\in F \,\, \text{then}\,\, b\in F\}$$

Where ${\rm Co}\bf A$ is the poset of all congruences of $\bf A$. Besides, a matrix is said to be {\em \bf reduced} when ${\bf \Omega_A} F$ is the identity; that is to say, the identity relation is only congruence compatible with $F$.

To define the notion of {\em algebraic counterpart of a logic $L$}, we need to consider the class of $Alg^*(L)$ of algebras reducts of the reduced models of $L$ as follows:

$$ Alg^*(L):=\{{\bf A}: \exists F\subseteq A, \langle A, F\rangle \, \, \text{is a model of }\,\, F\,\, \text{and}\,\, {\bf \Omega_A} F=Id\}  $$

The logic $L$ is algebraizable, then $Alg^*(L)$ coincides with its equivalent algebraic semantics. This is the case of Rasiowa's {\em implicative models} treated in \cite{RA}, see also \cite[Lemma 2.6]{Font}.

The theory to associate generalized matrix model with a class of algebra with an arbitrary logic can be found in \cite{FJ}. Indeed, if $\langle A, {\cal C}\rangle$ is a generalized matrix, then its Tarski congruence $ \widetilde{\bf \Omega}_{\bf A} F$ is the largest congruence of $\bf A$ that is compatible with all members of $\cal C$; it is not hard to show that:

$$ \widetilde {\bf \Omega}_{\bf A} F=\bigcap\limits_{F\in \cal C}  {\bf \Omega_A} F$$

A generalized matrix is {\em reduced} when  Tarski congruence is the identity and the class of $L$-algebras $Alg(L)$ is defined as the class of reducts of the reduced generalized matrix if $L$, i.e.:

$$ Alg(L):=\{{\bf A}: \exists {\cal C} \subseteq P(A), \langle A, {\cal C}\rangle \, \, \text{is a generalized model of }\,\, F\,\, \text{and}\,\, \widetilde{{\bf \Omega}}_{\bf A} {\cal C}=Id\}  $$

It is well-known that $Alg(L)$ is the class of subdirect products of algebras in  $ Alg^*(L)$,  and in general $Alg(L)\subseteq  Alg^*(L)$ holds, see \cite[Theorem 5.70]{Font}. One of the main aims of this section is to see that $Alg(\mathbb{L}_{k}^{\leq})=\mathbb{C}_k$. To this end, from Proposition \ref{regla} we can infer that $\models_{k}^{\leq}$ is a {\em semilattice based} with respect to $\mathbb{C}_k$ in the Jansana's sense \cite[p. 76]{RJ}.

\begin{proposition}\label{algebras}
$Alg(\mathbb{L}_{k}^{\leq})=\mathbb{C}_k$.
\end{proposition}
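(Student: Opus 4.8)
The plan is to prove the two inclusions $\mathbb{C}_k \subseteq Alg(\mathbb{L}_k^{\leq})$ and $Alg(\mathbb{L}_k^{\leq}) \subseteq \mathbb{C}_k$ using the general machinery of semilattice-based logics recalled just before the statement. Since $\models_k^{\leq}$ is semilattice-based with respect to $\mathbb{C}_k$ by Proposition~\ref{regla}, the natural strategy is to exploit the known characterization of $Alg(L)$ for such logics: $Alg(\mathbb{L}_k^{\leq})$ is the class of subdirect products of the algebra reducts of reduced models, and reduced models of a semilattice-based logic are governed by the lattice filters of the algebras in the defining class together with the Tarski (Leibniz) congruence being trivial.

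First I would show $\mathbb{C}_k \subseteq Alg(\mathbb{L}_k^{\leq})$. Given $(A,t)\in\mathbb{C}_k$, I would consider the generalized matrix $\langle A, \mathcal{F}_i(A)\rangle$ where $\mathcal{F}_i(A)$ is the family of all lattice filters of $A$, which is precisely the generalized matrix appearing in the family $\mathcal{G}$ of the previous subsection. One checks directly that $\langle A, \mathcal{F}_i(A)\rangle$ is a generalized model of $\mathbb{L}_k^{\leq}$. The key computation is that its Tarski congruence $\widetilde{\bf\Omega}_{\bf A}\mathcal{F}_i(A) = \bigcap_{F\in\mathcal{F}_i(A)} {\bf\Omega_A}F$ is the identity: a pair $(a,b)$ lies in every ${\bf\Omega_A}F$ iff $a$ and $b$ belong to exactly the same lattice filters, which forces $a=b$ since the principal filter $[a)$ separates them. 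Hence $(A,t)\in Alg(\mathbb{L}_k^{\leq})$.

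For the reverse inclusion $Alg(\mathbb{L}_k^{\leq}) \subseteq \mathbb{C}_k$, I would use that $Alg(\mathbb{L}_k^{\leq})$ is the class of subdirect products of algebras in $Alg^*(\mathbb{L}_k^{\leq})$, and since $\mathbb{C}_k$ is a variety (hence closed under subdirect products), it suffices to prove $Alg^*(\mathbb{L}_k^{\leq}) \subseteq \mathbb{C}_k$. So let $\langle {\bf B}, F\rangle$ be a reduced matrix model of $\mathbb{L}_k^{\leq}$ with $\bf B$ of the appropriate type. The semilattice-based presentation via Proposition~\ref{regla} tells us that the inequalities valid in $\mathbb{C}_k$ are exactly the rules $\alpha_1,\dots,\alpha_n \models_k^{\leq}\alpha$; translating the reducedness of $\langle{\bf B},F\rangle$, one shows that $F$ must be a lattice filter of a reduct that is a $\mathbb{C}_k$-algebra, essentially because every equation $s\approx u$ of $\mathbb{C}_k$ can be written as a pair of inequalities $s\preccurlyeq u$, $u\preccurlyeq s$, each of which is a derivable rule of $\mathbb{L}_k^{\leq}$, and the Leibniz congruence being trivial forces these inequalities to hold as genuine lattice inequalities in $\bf B$, hence forces the defining identities of $\mathbb{C}_k$. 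Therefore ${\bf B}\in\mathbb{C}_k$.

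The main obstacle I anticipate is the careful verification, in the reverse inclusion, that reducedness of the matrix genuinely transfers \emph{all} the defining identities of $\mathbb{C}_k$ — not just the lattice/De Morgan ones but also the $mpM$ axiom (tm), the automorphism equations $t^k x = x$ and $t(x\#y)=tx\# ty$, and the interaction identities of Lemma~\ref{C2L1}'s ambient list. This is where one must invoke that $\models_k^{\leq}$ is semilattice-based in Jansana's precise sense and apply his general theorem identifying $Alg(L)$ with the generating variety; the routine but delicate part is checking the hypotheses of that theorem (e.g. that the consequence relation is determined by the order of $\mathbb{C}_k$ and that $\mathbb{C}_k$ is generated as a quasivariety, indeed variety, in the right way). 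Everything else — the identity-congruence computation and the closure of $\mathbb{C}_k$ under subdirect products — is straightforward.
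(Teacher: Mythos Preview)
Your proposal is correct and ultimately rests on the same external input as the paper --- Jansana's results on semilattice-based logics --- but the route is organized differently. The paper does not split into two inclusions at all: it invokes \cite[Proposition~3.1]{RJ} once to identify $Alg(\mathbb{L}_k^{\leq})$ with the \emph{intrinsic variety} of the logic (the variety axiomatized by the equations $\phi\approx\psi$ such that $\phi\dashv\models_k^{\leq}\psi$), and then uses Proposition~\ref{regla} to observe that these interderivabilities are exactly the identities of $\mathbb{C}_k$, so the intrinsic variety equals $\mathbb{C}_k$. Your approach, by contrast, proves $\mathbb{C}_k\subseteq Alg(\mathbb{L}_k^{\leq})$ by hand via the Tarski-congruence computation on $\langle A,\mathcal{F}_i(A)\rangle$ (a perfectly valid and instructive argument), and then for the reverse inclusion you end up appealing to Jansana's theorem anyway after noting that the direct verification is delicate. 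So the net effect is the same, but the paper's packaging is shorter: one citation plus one line, rather than one easy inclusion done by hand and one hard inclusion deferred to the same citation. Your version has the pedagogical advantage of making the easy direction self-contained; the paper's has the advantage of not pretending the hard direction can be done without the general theory.

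One minor wording issue: in your Tarski-congruence argument you write ``$(a,b)$ lies in every ${\bf\Omega_A}F$ iff $a$ and $b$ belong to exactly the same lattice filters''. Only the forward implication is needed (and is what actually holds by compatibility); the converse is neither used nor generally true. The conclusion $a=b$ follows from the forward implication alone, via the principal filter $[a)$, exactly as you say.
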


\begin{proof}
From Proposition 3.1 \cite{RJ}, we have that $Alg(\mathbb{L}_{k}^{\leq})$ is the intrinsic variety of $L$ which is characterized by the equations $\psi \approx \psi$. Now, from Proposition \ref{regla} we have that  $\psi \approx \psi$ iff $\phi\models_{k}^{\leq} \psi$ and  $\psi\models_{k}^{\leq} \phi$ which implies the identities that characterized $\mathbb{C}_k$ are exactly the same that the one of  $Alg(\mathbb{L}_{k}^{\leq})$.
\end{proof}

\

Recall that logic $L$ is said to be {\em selfextensional} if $L$ satisfies the following {\em weak form of the replacement property}: For any $\alpha,\beta,\psi(x,\vec{y})\in Fm$,

\begin{center}
if $\alpha \dashv\vdash_L \beta$ then $\psi(\alpha,\vec{y}) \dashv\vdash_L  \psi(\beta,\vec{y})$.
\end{center}

\begin{coro}
$\mathbb{L}_{k}^{\leq}$ is  selfextensional.
\end{coro}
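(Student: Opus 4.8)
The plan is to derive selfextensionality of $\mathbb{L}_k^{\leq}$ as a direct consequence of Proposition \ref{algebras}, namely $Alg(\mathbb{L}_k^{\leq})=\mathbb{C}_k$, together with the fact (recorded in Proposition \ref{regla}) that $\models_k^{\leq}$ is semilattice-based with respect to the variety $\mathbb{C}_k$. The key observation I would use is the standard characterisation: a logic $L$ is selfextensional if and only if its interderivability relation $\dashv\vdash_L$ is a congruence of the formula algebra $\mathfrak{Fm}$, and for a logic that is semilattice-based with respect to a variety $\mathsf{K}$ this relation is precisely the one induced by the equational theory of $\mathsf{K}$ on $\mathfrak{Fm}$, which is a congruence by definition of a variety.

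Concretely, first I would unwind what $\alpha \dashv\vdash_{\mathbb{L}_k^{\leq}} \beta$ means: by Proposition \ref{regla}, $\alpha \models_k^{\leq}\beta$ holds iff the inequality $\alpha \preccurlyeq \beta$ holds in $\mathbb{C}_k$, so $\alpha \dashv\vdash_{\mathbb{L}_k^{\leq}}\beta$ holds iff both $\alpha \preccurlyeq \beta$ and $\beta \preccurlyeq \alpha$ hold in $\mathbb{C}_k$, i.e. iff the identity $\alpha \approx \beta$ is valid in $\mathbb{C}_k$. Next, since $\mathbb{C}_k$ is a variety, the relation ``$\alpha \approx \beta$ holds in $\mathbb{C}_k$'' is a congruence of the absolutely free algebra $\mathfrak{Fm}$ — this is just the statement that the equational consequence of a variety is compatible with all the operation symbols. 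Then, given $\alpha \dashv\vdash_{\mathbb{L}_k^{\leq}}\beta$ and an arbitrary context $\psi(x,\vec y)$, compatibility of this congruence with the operations of $\mathfrak{Fm}$ yields $\psi(\alpha,\vec y)\approx \psi(\beta,\vec y)$ in $\mathbb{C}_k$, and translating back through Proposition \ref{regla} gives $\psi(\alpha,\vec y)\dashv\vdash_{\mathbb{L}_k^{\leq}}\psi(\beta,\vec y)$, which is exactly the weak replacement property defining selfextensionality.

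There is essentially no obstacle here: the whole argument is a chain of equivalences resting on Proposition \ref{regla} and Proposition \ref{algebras}, both already established. The only point deserving care is making explicit that interderivability in $\mathbb{L}_k^{\leq}$ coincides with equational equivalence in $\mathbb{C}_k$ (rather than merely with belonging to the Tarski congruence of some model), which is immediate from the semilattice-based presentation. Alternatively, and even more briefly, one can cite the general fact that every logic which is semilattice-based with respect to a variety is selfextensional (this appears in Jansana's work \cite{RJ}), so the corollary follows at once from Proposition \ref{regla}; I would present the short self-contained congruence argument above and remark that it also follows from this general principle.
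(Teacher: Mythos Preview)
Your proposal is correct and subsumes the paper's own argument: the paper proves the corollary in one line by invoking \cite[Theorem 3.2]{RJ} together with Proposition \ref{algebras}, which is exactly the alternative you mention at the end. Your direct congruence argument via Proposition \ref{regla} is a valid self-contained unpacking of that citation (and in fact only Proposition \ref{regla} is needed for it, not Proposition \ref{algebras}).
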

\begin{proof}
It is an immediate consequence from \cite[Theorem 3.2]{RJ} and Proposition \ref{algebras}.
\end{proof}

\

Now, we start with the study of the classification of $\mathbb{L}_{k}^{\leq}$ in the Leibniz hierarchy. This hierarchy is characterized in several ways, although the majority  concerns several properties of Leibniz operator given by:

$$F \longmapsto  {\bf \Omega_A} F$$

A logic $L$ is  {\em algebraizable} (\cite{BP}) when for every matrix $\langle {\bf A}, F\rangle$ which is model of $L$ for every algebra $\bf A$ belonging the quasivariety $K$, the Leibniz operator ${\bf \Omega_A} F$ is an isomorphism between the lattice ${\cal F}_i({\bf A})$ and ${\rm Co}{\bf A}$. Besides, it is possible to see that $Alg^*(L)=K$.

A logic $L$ is a {\em protoalgebraic} one when Leibniz operator is monotonic; i.e.,  if $G\subseteq F$ then ${\bf \Omega_A} G\subseteq {\bf \Omega_A} F$. It is clear that every algebraizable logic is protoalgebraic. An intermediate class of logics between them is of the {finitely equivalential} ones. Recall that a logic $L$ is said to be {\em equivalential} if there is a set of formulas built-up in two variables $p$ and $q$, $E(p,q)$ such that the following conditions hold:

\begin{itemize}
\item[(E1)] $\vdash_L \delta(p,p)$ for every $\delta(p,q)\in  E(p,q)$,
\item[(E2)] $E(p,q) \cup \{p\} \vdash_L  q$,
\item[(E3)] for every $n$-ary operation $\#$ and two $n$-tuples of variables $q_1,\cdots,q_n, p_1,\cdots,p_n$, we have: $E(p_1,q_1) \cup \cdots E(p_n,q_n) \vdash_L E(\#(p_1,\cdots,p_n),\#(q_1,\cdots,q_n))$.
\end{itemize}

We say that $L$ is {\em finitely equivalential} if the set $E(p,q)$ is finite. To see that the logic $\mathbb{L}_{k}^{\leq}$ is finitely equivalential, we will consider the following implication: 

$$x\Rightarrow y:= (x\to y)\wedge (\sim y \to \sim x)$$

where $x\to y:= \sim ((\sim (x\vee y))^* \wedge (x\vee y)) \vee (( \sim (x\wedge y))^* \wedge (x\wedge y))$. The following Proposition presents properties of this implication to be used in the rest of the paper.

\begin{proposition}\label{propiedades}
For a given ${\cal C}_k$-algebras $(A,t)$ and any $x,y,z,w \in A$, the following holds:
\begin{multicols}{2}
\begin{itemize}
\item[\rm (i)] $x\Rightarrow y= y\Rightarrow x$,
\item[\rm  (ii)] $x\Rightarrow y= \sim x \Rightarrow \sim y$,
\item[\rm (iii)] $x\Rightarrow x=1$,
\item[\rm  (iv)] $(x\Rightarrow y) \wedge (y\Rightarrow z)\leq (x\Rightarrow z)$, 
\item[\rm (v)] $(x\Rightarrow y) \wedge (z\Rightarrow w)\leq (x \wedge z) \Rightarrow (y\wedge w) $, 
\item[\rm (vi)] $(x\Rightarrow y) \wedge (z\Rightarrow w)\leq (x \vee z) \Rightarrow (y\vee w) $, 
\item[\rm  (vii)] $x\Rightarrow y = x^* \Rightarrow y^*$,

\item[\rm (viii)] $(x\Rightarrow y)\wedge x = (x\Rightarrow y)\wedge y$,
 
\item[\rm (ix)]  $(1\Rightarrow x)=1$ iff $x=1$.
\end{itemize}
\end{multicols}
\end{proposition}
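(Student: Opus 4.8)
The plan is to establish each of the nine identities by reducing it to a computation in the generating algebras $T_3$ and $T_4$ (and their subalgebras), exactly as in the proofs of Lemma \ref{LFI} and Lemma \ref{PP}. Concretely, since by Theorem \ref{cklocfini} the variety $\mathbb{C}_k$ is generated by the algebras $(T_{i,k},t)$ with $i=3,4$ and their subalgebras, and since every identity listed is expressed purely in the $mpM$-language together with $t$, it suffices to check that the underlying $mpM$-identity holds in $T_2$, $T_3$ and $T_4$, and then observe that applying $t$ componentwise preserves it. Recall $x\Rightarrow y = (x\to y)\wedge(\sim y\to\sim x)$ with $x\to y=\sim((\sim(x\vee y))^{*}\wedge(x\vee y))\vee((\sim(x\wedge y))^{*}\wedge(x\wedge y))$; note $t$ does not appear in $\Rightarrow$, so in fact it is enough to verify each property in the three base $mpM$-algebras, and the result then follows because $\mathbb{C}_k$ is a variety and the equations are in the $mpM$-reduct language.

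First I would record the basic symmetry facts: items (i), (ii) and (vii) follow at the level of terms from the De Morgan laws and the symmetry of the defining expression for $\to$ in $x\vee y$ and $x\wedge y$. For (i), swapping $x$ and $y$ leaves $x\vee y$ and $x\wedge y$ unchanged, so $x\to y$ and $y\to x$ differ only by the order of the two disjuncts defining $(\,\cdot\,)\to(\,\cdot\,)$ after also using $\sim y\to\sim x = x\to y$ componentwise; a short term manipulation gives $x\Rightarrow y=y\Rightarrow x$. For (ii), replacing $x,y$ by $\sim x,\sim y$ swaps the two conjuncts in the definition of $\Rightarrow$ and uses $\sim\sim=\mathrm{id}$. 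For (vii), one checks $x\to y = x^{*}\to y^{*}$ by a case analysis on the three base algebras (this is where pseudocomplementation genuinely enters), and then (vii) follows from the definition of $\Rightarrow$ together with (ii).

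Next, (iii) and (ix): for (iii) one evaluates $x\Rightarrow x$ directly — here $x\vee x=x\wedge x=x$, so $x\to x=\sim((\sim x)^{*}\wedge x)\vee((\sim x)^{*}\wedge x)$, and using (T14)-style identities ($z\vee\sim z=1$ when $z$ is of the form $(\sim x)^{*}\wedge x$, which is $0$ in $T_2,T_3,T_4$) one gets $x\to x=1$ and hence $x\Rightarrow x=1$. For (ix), the direction $x=1\Rightarrow 1\Rightarrow x=1$ is (iii) with $x=1$; conversely one checks in $T_2,T_3,T_4$ that $1\Rightarrow x = x$ (evaluating $1\vee x=1$, $1\wedge x=x$ in the defining term), so $1\Rightarrow x=1$ forces $x=1$. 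Finally, (viii), $(x\Rightarrow y)\wedge x=(x\Rightarrow y)\wedge y$, is the key ``compatibility'' identity; I would verify it by brute case analysis over the $3\times 3$ and $4\times 4$ tables of $T_3$ and $T_4$, checking that whenever $x\ne y$ the value of $x\Rightarrow y$ lies below both $x$ and $y$ (in fact is $0$ unless $x=y$), which makes both sides equal.

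The transitivity and congruence items (iv), (v), (vi) are the ones I expect to be the main obstacle, since a naive check means inspecting all triples (for (iv)) or quadruples (for (v), (vi)) of elements of $T_4$. The efficient route is to first prove the auxiliary fact that, in each base algebra, $x\Rightarrow y\le \triangle(\text{something})$ and more usefully that $u\le x\Rightarrow y$ together with $u\le y\Rightarrow z$ forces $u\le x\Rightarrow z$ by observing $x\Rightarrow y$ is Boolean-valued-ish — precisely, one shows $x\Rightarrow y\in\{0,1\}$ in $T_2$, and in $T_3,T_4$ one shows $(x\Rightarrow y)$ behaves like an ``equality indicator'' compatible with meets and joins. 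Then (iv) becomes the statement that equality is transitive, and (v),(vi) become the statement that equality is a congruence for $\wedge$ and $\vee$ — both of which hold in any lattice once $\Rightarrow$ is identified with the appropriate indicator term. So the real work is the lemma ``$x\Rightarrow y$, evaluated in $T_2,T_3,T_4$, equals $1$ if $x=y$ and is strictly below $x\wedge y$ (indeed equals $0$) otherwise''; granting that, (iv)--(vi), (viii) and (ix) all drop out, and (i)--(iii),(vii) are the term-level symmetries noted above. I would therefore structure the proof as: (1) the evaluation lemma for $\Rightarrow$ on the three base algebras; (2) the term symmetries (i),(ii),(iii),(vii); (3) deduce (iv),(v),(vi),(viii),(ix) from (1) using only lattice arithmetic; (4) invoke Theorem \ref{cklocfini} to lift everything from the base algebras to an arbitrary $(A,t)\in\mathbb{C}_k$.
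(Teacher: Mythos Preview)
Your overall strategy is the paper's: observe that $\Rightarrow$ lies in the pure $mpM$-language, so (i)--(viii) reduce to checking identities in the generating $mpM$-algebras $T_3$ and $T_4$ (the paper dispatches this with ``using the corresponding table''), while (ix) is handled separately by a direct term computation giving $1\Rightarrow x=(\sim x)^{*}\wedge x$. Your added evaluation lemma --- that $x\Rightarrow y\in\{0,1\}$ in each base algebra, with value $1$ exactly when $x=y$ --- is correct and organises (iv)--(vi) and (viii) more cleanly than a raw table scan.

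There is, however, a concrete slip in your treatment of (ix): you assert $1\Rightarrow x=x$ in the base algebras, but in $T_3$ one has $1\Rightarrow a=0\neq a$ --- exactly what your own evaluation lemma predicts. The easy repair is to use (viii) with $x=1$, which gives the identity $1\Rightarrow y=(1\Rightarrow y)\wedge y\le y$ and hence the nontrivial direction of (ix) in every $(A,t)$; routing through an identity matters because (ix) is a quasi-identity and does not transfer from generators by a bare HSP argument (the paper avoids this by computing $1\Rightarrow x=(\sim x)^{*}\wedge x$ directly). A second caution: your auxiliary claim $x\to y=x^{*}\to y^{*}$ for (vii) also fails in $T_3$ (take $x=a$, $y=1$: $a\to 1=0$ but $a^{*}\to 1^{*}=0\to 0=1$), and in fact the equality in (vii) itself fails there ($a\Rightarrow 1=0$ while $a^{*}\Rightarrow 1^{*}=0\Rightarrow 0=1$); only the inequality $x\Rightarrow y\le x^{*}\Rightarrow y^{*}$, which is what the later applications actually need, survives the table check.
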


\begin{proof} Let us consider ${\cal C}_k$-algebra $(A,t)$. Then, it is clear that $A$ is an $mpM-$algebra, see Section \ref{preli}. Since the variety of $mpM-$algebra is generated by $T_3$ and $T_4$, every identity that is verified in the generating algebras  holds in every algebra of the class. The identities expressed in (i) to (viii) are in the language of the $mpM-$algebras and so it is easy to see that using the corresponding  table, these identities hold in  $(A,t)$.

Now, let us suppose that $(1\Rightarrow x)=1$. Then, $(1\Rightarrow x)= (1\to x)\wedge (\sim x\to \sim 1)= \sim ((\sim 1)^* \wedge 1) \vee (( \sim x)^* \wedge x)\wedge  \sim ((\sim (\sim x)^* \wedge (\sim x)) \vee (( \sim 0)^* \wedge 0)= (( \sim x)^* \wedge x)\wedge (  (\sim x)^* \vee  x))=1$. The latter identity is equivalent to $(( \sim x)^* \wedge x)=1$ and $( (\sim x)^* \vee  x))=1$  iff $( \sim x)^*=1$ and $ x=1$, which completes the proof.
\end{proof}

\begin{definition}
Let $\delta(p,q)$ be the following formula of $\mathcal{L}_{\Sigma}$:
$$\delta(p,q)=(p \Rightarrow q)\wedge ( t p \Rightarrow t q)\wedge( t^2 p \Rightarrow t^2 q)\wedge \cdots \wedge ( t^{k-1} p \Rightarrow t^{k-1} q).  $$
The   equivalence sentences system for $\mathbb{L}_{k}^{\leq}$ is  $E(p,q)=\{\delta(p,q)\}$.
\end{definition}

\begin{theorem}\label{proto}
The logic $\mathbb{L}_{k}^{\leq}$ is protoalgebraic and finitely equivalential, but not algebraizable.
\end{theorem}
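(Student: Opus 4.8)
The proof naturally splits into three parts: (a) $\mathbb{L}_{k}^{\leq}$ is finitely equivalential (hence protoalgebraic), using the candidate set $E(p,q)=\{\delta(p,q)\}$; (b) $\mathbb{L}_{k}^{\leq}$ is \emph{not} algebraizable. For part (a), the plan is to verify conditions (E1), (E2), (E3) directly from Proposition \ref{propiedades} together with the defining clauses of $\models_k^{\leq}$ via Proposition \ref{regla}, which reduces each metalogical entailment to an algebraic inequality in $\mathbb{C}_k$. Concretely: (E1) $\models_k^{\leq}\delta(p,p)$ follows because each conjunct $t^ip\Rightarrow t^ip=1$ by Proposition \ref{propiedades}(iii), so $\delta(p,p)=1$ identically. (E2) $\delta(p,q),p\models_k^{\leq}q$ follows from $\delta(p,q)\wedge p\leq (p\Rightarrow q)\wedge p=(p\Rightarrow q)\wedge q\leq q$ by Proposition \ref{propiedades}(viii). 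For (E3) one must check, for each basic operation $\#\in\{\wedge,\vee,\sim,{}^\ast,t\}$, that the conjunction of the $\delta(p_i,q_i)$ entails $\delta(\#\vec p,\#\vec q)$: for $\wedge$ and $\vee$ use Proposition \ref{propiedades}(v),(vi) applied conjunct-by-conjunct (noting $t^i$ commutes with $\wedge,\vee$); for $\sim$ use (ii); for ${}^\ast$ use (vii); and for the unary operator $t$ the key point is that $t$ permutes the conjuncts of $\delta$ cyclically (since $t^k=\mathrm{id}$), so $t\,\delta(p,q)$ and $\delta(tp,tq)$ have the same conjuncts up to reindexing, giving $\delta(p,q)\leq\delta(tp,tq)$. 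Since $E(p,q)$ is a single formula, the logic is finitely equivalential; and every (finitely) equivalential logic is protoalgebraic.

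For part (b), the plan is to show that the Leibniz operator $F\mapsto\mathbf{\Omega_A}F$ fails to be injective on the deductive filters of some $\mathbf{A}\in\mathbb{C}_k$, which by the Blok--Pigozzi characterization rules out algebraizability (indeed, this is the standard obstruction for degree-preserving logics, already exploited for $\mathcal{TML}$ in \cite{FR} and for $\mathcal{SI}^\leq$). The cleanest route: by the generalized-matrix analysis of the previous subsection, the $\mathbb{L}_k^{\leq}$-filters on a $\mathcal{C}_k$-algebra $\mathbf{A}$ are exactly the lattice filters of $\mathbf{A}$, and for a lattice filter $F$ the Leibniz congruence $\mathbf{\Omega_A}F$ is $\{(x,y): \text{for all unary polynomials } p, \ p(x)\in F \Leftrightarrow p(y)\in F\}$. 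Taking $\mathbf{A}=(T_{4},\mathrm{Id})$ (or $(T_{4,k},t)$), one exhibits two distinct lattice filters with the same Leibniz congruence — for instance $F_1=\{1\}$ and $F_2=\{a,1\}$ (where $a$ is a fixed point of $\sim$): any polynomial separating $a$ from elements not above it must use $\triangle$ or ${}^\ast$, and one checks that no unary polynomial over $T_4$ distinguishes the pairs collapsed by $F_1$ from those collapsed by $F_2$, so $\mathbf{\Omega}_{T_4}F_1=\mathbf{\Omega}_{T_4}F_2$. Non-injectivity of the Leibniz operator then contradicts the requirement that it be a lattice isomorphism $\mathcal{F}i(\mathbf{A})\to\mathrm{Co}\,\mathbf{A}$, so $\mathbb{L}_k^{\leq}$ is not algebraizable.

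\emph{The main obstacle} is the non-algebraizability half: one must be careful about which matrices/filters witness the failure and compute the relevant Leibniz congruences honestly, since a hasty choice of filters can accidentally yield distinct congruences. A convenient shortcut — if the paper is willing to invoke it — is that an algebraizable logic whose equivalent algebraic semantics is a variety must satisfy the full replacement property and have theorems detecting the implication; but the degree-preserving $\models_k^{\leq}$ has \emph{no} suitable detachment-style theorem (it is purely order-based), while it \emph{is} selfextensional. More directly: since $\mathbb{L}_k$ (the $1$-assertional companion) is algebraizable and strictly stronger while sharing the same theorems, $\mathbb{L}_k^{\leq}$ cannot itself be algebraizable — an algebraizable logic is determined by its equivalent quasivariety and cannot be a proper sublogic of another logic with the same theorems and the same algebraic counterpart without violating the isomorphism property of the Leibniz operator. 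Whichever of these arguments is chosen, the routine parts are the (E1)--(E3) verifications, which are immediate from Proposition \ref{propiedades}; the verification for the operator $t$ — that $\delta$ is, up to cyclic reindexing, $t$-invariant — is the one mildly novel point and deserves to be spelled out.
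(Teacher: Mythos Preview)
Your proposal is correct and follows essentially the same route as the paper: (E1)--(E3) are verified from Proposition~\ref{propiedades} (your explicit handling of the operator $t$ via cyclic reindexing of the conjuncts of $\delta$ is a point the paper leaves implicit), and non-algebraizability is shown by exhibiting two distinct lattice filters on the simple algebra $(T_4,\mathrm{Id})$ with the same Leibniz congruence. The only cosmetic difference is the choice of witnesses: the paper uses the symmetric pair $\uparrow a=\{a,1\}$ and $\uparrow b=\{b,1\}$ rather than your $\{1\}$ and $\{a,1\}$; since $(T_4,\mathrm{Id})$ is simple, any two distinct proper lattice filters already have the same Leibniz congruence, so no polynomial analysis is needed and your alternative arguments for part~(b) are unnecessary.
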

\begin{proof}
The fact  that $\models_{k}^{\leq}$ satisfies (E1) is a consequence of (iii) from Proposition \ref{propiedades}. From the definition of $\models_{k}^{\leq}$  and (viii) of Proposition \ref{propiedades}, we have that (E2) is held. Now, the fact that  $\models_{k}^{\leq}$ verifies (E3) follows from (ii), (v), (vi) and (vii) of Proposition \ref{propiedades}. Since the logic is finitely equivalential, then it is protoalgebraic. To see that $\mathbb{L}_{k}^{\leq}$ is not algebraizable is enough to take the simple algebra $(T_4, Id)$ where $T_4$ is the simple $mpM-$algebra and $id$ is the identity function. It is not difficult to see that if we take the filters $\uparrow a=\{a,1\}$ and $\uparrow b=\{b,1\}$, then ${\bf \Omega_A} \uparrow a={\bf \Omega_A} \uparrow b=A\times A$, then the  Leibniz operator is not 1-1, by which the proof is completed.
\end{proof}

\

From the proof of the last Theorem, it is clear that the logic $\mathbb{L}_{k}^{\leq}$ is not weak algebraizable because the Leibniz operator is not monotonic.

\begin{coro}\label{counter1}
$Alg^*(\mathbb{L}_{k}^{\leq})=\mathbb{C}_k$.
\end{coro}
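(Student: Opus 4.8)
**Proof proposal for Corollary \ref{counter1} ($Alg^*(\mathbb{L}_{k}^{\leq})=\mathbb{C}_k$).**

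The plan is to derive this from the two facts already established in the excerpt: (1) Proposition \ref{algebras}, which gives $Alg(\mathbb{L}_{k}^{\leq})=\mathbb{C}_k$, and (2) Theorem \ref{proto}, which asserts that $\mathbb{L}_{k}^{\leq}$ is protoalgebraic (indeed finitely equivalential). The key general principle I would invoke is the standard result of abstract algebraic logic that for a \emph{protoalgebraic} logic $L$ one has the equality $Alg(L)=Alg^{*}(L)$ (see, e.g., \cite[Corollary 4.26]{Font} or the equivalent statement in \cite{FJ}): for protoalgebraic logics, the class of algebra reducts of reduced matrix models already coincides with the class of algebra reducts of reduced generalized matrix models, because in the protoalgebraic case the Tarski congruence $\widetilde{\bf\Omega}_{\bf A}{\cal C}$ of a reduced generalized matrix is witnessed by the Leibniz congruences of its members. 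Thus the entire corollary should reduce to: $Alg^{*}(\mathbb{L}_{k}^{\leq})=Alg(\mathbb{L}_{k}^{\leq})=\mathbb{C}_k$.

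Concretely, the steps in order are: first, cite Theorem \ref{proto} to record that $\mathbb{L}_{k}^{\leq}$ is protoalgebraic. Second, invoke the cited AAL theorem to conclude $Alg^{*}(\mathbb{L}_{k}^{\leq})=Alg(\mathbb{L}_{k}^{\leq})$; here it is worth noting that the inclusion $Alg(L)\subseteq Alg^{*}(L)$ is always true (as remarked in the excerpt, citing \cite[Theorem 5.70]{Font}), so the real content is the reverse inclusion, which is exactly where protoalgebraicity is used. Third, apply Proposition \ref{algebras} to replace $Alg(\mathbb{L}_{k}^{\leq})$ by $\mathbb{C}_k$, completing the proof. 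Alternatively, if one prefers a more self-contained argument, one can use the equivalence system $E(p,q)=\{\delta(p,q)\}$ from the definition preceding Theorem \ref{proto}: for an equivalential logic the Leibniz congruence of any matrix $\langle{\bf A},F\rangle$ is given explicitly by $\Omega_{\bf A}F=\{(a,b): \delta^{\bf A}(a,b)\in F\}$, and from this one checks directly that a matrix model $\langle{\bf A},F\rangle$ is reduced iff, taking ${\cal C}$ to be the family of all deductive filters, the corresponding generalized matrix is reduced, yielding $Alg^{*}=Alg$ by hand.

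The only genuine obstacle is making sure the hypotheses of the quoted AAL theorem are met and that the bibliography contains the appropriate reference; everything else is bookkeeping, since Proposition \ref{algebras} and Theorem \ref{proto} have already done the substantive work. In particular, one should be slightly careful that $Alg^{*}$ is defined via \emph{all} reduced matrix models (not merely those over a fixed quasivariety), which is indeed the definition given in the excerpt, so the protoalgebraic $Alg=Alg^{*}$ theorem applies verbatim. I would therefore expect the proof to be essentially one short paragraph: \emph{Since $\mathbb{L}_{k}^{\leq}$ is protoalgebraic by Theorem \ref{proto}, $Alg^{*}(\mathbb{L}_{k}^{\leq})=Alg(\mathbb{L}_{k}^{\leq})$, and by Proposition \ref{algebras} the latter equals $\mathbb{C}_k$.}
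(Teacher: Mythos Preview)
Your proposal is correct and matches the paper's own proof essentially line for line: the paper invokes Theorem \ref{proto} together with \cite[Proposition 3.2]{FJ} to obtain $Alg^{*}(\mathbb{L}_{k}^{\leq})=Alg(\mathbb{L}_{k}^{\leq})$, and then applies Proposition \ref{algebras}. The only cosmetic difference is the reference you cite for the general AAL fact (Font's textbook rather than Font--Jansana), and your optional ``by hand'' alternative via the explicit equivalence formula $\delta(p,q)$ is not needed.
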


\begin{proof}
From  Theorem \ref{proto} and  \cite[Proposition 3.2]{FJ}, we have that $Alg^*(\mathbb{L}_{k}^{\leq})=Alg(\mathbb{L}_{k}^{\leq})$. From the latter and Proposition \ref{algebras}, we have completed the proof.
\end{proof}
\section{ Another  sentential logic associated with the class ${\cal C}_k$-algebras}

In this section, we will present another sentential logic that has an algebraic counterpart the class ${\cal C}_k$-algebras. To this end, let us consider $\mathfrak{Fm}=\langle Fm, \wedge, \vee, \sim, ^\ast, t, \top, \bot \rangle$ the absolutely free algebra of formulas of type $(2,2,1,1,1,0)$ generated by some denumerable set of variables. Now we will say that $\mathbb{L}_{k}=\langle Fm, \models_k\rangle$ is a sentential logic which is defined as follows  for every $\Gamma\cup\{\alpha\}\subseteq Fm$:



\begin{tabular}{ccc}
$\Gamma \models_k \alpha$ & $\Longleftrightarrow$ &  $\forall (A,t)\in \mathbb{C}_k$, $\forall v\in Hom_{\mathbb{C}_k}(Fm,(A,t))$\\
& & if $v(\gamma)=1$ for every $\gamma\in\Gamma$ then $v(\alpha)= 1$ \\
\end{tabular}\\[2mm]

In the area of algebraic logic, these kind of  logics defined as above are so-called $1$-assertional ones. When the logic is built over a lattice, it is generally finitary. Indeed:
 
\begin{proposition}
The logic $\mathbb{L}_{k}$ is finitary.
\end{proposition}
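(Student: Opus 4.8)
The plan is to show that $\models_k$ is finitary, i.e. that whenever $\Gamma\models_k\alpha$ there is a finite subset $\Gamma_0\subseteq\Gamma$ with $\Gamma_0\models_k\alpha$. First I would recall from Theorem \ref{cklocfini} that the variety $\mathbb{C}_k$ is finitely generated, hence generated as a quasivariety by the finite set of simple algebras $\{(T_{i,d},t): i=2,3,4,\ d/k\}$ together with their subalgebras; in particular $\mathbb{C}_k$ is generated by a single finite algebra, namely the product $\mathbf{M}$ of all these finitely many finite simple algebras. Since every homomorphism from $\mathfrak{Fm}$ into an algebra of $\mathbb{C}_k$ factors (on the relevant values) through this generating set, the consequence relation $\models_k$ is exactly the $1$-assertional matrix consequence $\models_{\langle\mathbf{M},\{1\}\rangle}$ determined by the single finite matrix $\langle\mathbf{M},\{1\}\rangle$, where $1$ is the top element of $\mathbf{M}$.

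Next I would invoke the standard fact that the consequence relation determined by a finite matrix (equivalently, by finitely many finite matrices) is finitary: this is essentially a compactness argument. Concretely, suppose $\Gamma\models_k\alpha$ but no finite subset of $\Gamma$ entails $\alpha$. Then for each finite $\Gamma_0\subseteq\Gamma$ there is an algebra $(A_{\Gamma_0},t)\in\mathbb{C}_k$ and a valuation $v_{\Gamma_0}$ with $v_{\Gamma_0}(\gamma)=1$ for all $\gamma\in\Gamma_0$ and $v_{\Gamma_0}(\alpha)\neq 1$. By the previous paragraph we may take every $A_{\Gamma_0}$ to be (a subalgebra of) the fixed finite algebra $\mathbf{M}$; since $\mathbf{M}$ is finite, there are only finitely many pairs (subalgebra, behaviour of the valuation on the finitely many variables occurring in $\Gamma\cup\{\alpha\}$ that matter), and a König-type / ultraproduct argument then produces a single valuation into $\mathbf{M}$ refuting $\Gamma\models_k\alpha$, a contradiction. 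Alternatively, and more cleanly, one observes directly that the class of matrices $\{\langle A,\{1\}\rangle : (A,t)\in\mathbb{C}_k\}$ is closed under ultraproducts — because $\{1\}$ is equationally (hence elementarily) definable in each $A$, and $\mathbb{C}_k$ being a variety is closed under ultraproducts — and a matrix semantics closed under ultraproducts always defines a finitary logic (see \cite{Font}); this is the route I would actually write down, since it parallels the treatment of $\mathbb{L}_k^\leq$ given earlier for $\mathbb{L}^k_{\cal M}$.

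The main obstacle, such as it is, is purely bookkeeping: making precise that restricting attention to the finitely many generating simple algebras and their subalgebras does not change $\models_k$. This follows because $\mathbb{C}_k=\mathbb{V}(T_{i,k}: i=3,4)$ (Theorem \ref{cklocfini}), so every identity — and more generally every quasi-identity of the form $\bigwedge\gamma_j\approx 1\ \Rightarrow\ \alpha\approx 1$ — that fails in $\mathbb{C}_k$ already fails in one of these finitely many finite algebras. Hence $\Gamma\models_k\alpha$ iff the corresponding (possibly infinitary) quasi-identity holds in each generator, and since each generator is finite the failure of such a quasi-identity only involves finitely many of the premises. I would phrase this last step as the reduction $\models_k\,=\,\bigcap_{i,d}\models_{\langle T_{i,d},\{1\}\rangle}$, an intersection of finitely many finite-matrix consequence relations, each of which is finitary by the classical result on finite matrices, and a finite intersection of finitary consequence relations is finitary.
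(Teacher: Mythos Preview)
Your proposal is correct, and the route you say you would ``actually write down'' --- closure of the matrix class $\{\langle A,\{1\}\rangle : (A,t)\in\mathbb{C}_k\}$ under ultraproducts, using that $\mathbb{C}_k$ is a variety and $\{1\}$ is elementarily (indeed equationally) definable --- is precisely the paper's one-line argument. The longer finite-matrix reduction you sketch first, via Theorem~\ref{cklocfini} and the decomposition $\models_k=\bigcap_{i,d}\models_{\langle T_{i,d},\{1\}\rangle}$, is also correct but is not the path the paper takes; it buys a more explicit description of $\models_k$ at the cost of invoking the full structure theory of the variety, whereas the ultraproduct argument needs nothing beyond $\mathbb{C}_k$ being equational.
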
 
\begin{proof} It  immediately follows from the fact that $\mathbb{C}_k$ is a variety and  the notion of filter for every algebra is elementary definable and then the class ${\cal C}_k$-algebras is closed under ultraproducts.
 \end{proof}

\begin{proposition}
The logics $\mathbb{L}_k$ and $\mathbb{L}_{k}^{\leq}$ have the same theorems.
\end{proposition}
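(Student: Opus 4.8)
The plan is to show the two consequence relations $\models_k$ and $\models_k^{\leq}$ coincide when the set of premises is empty, i.e. that $\emptyset\models_k\alpha$ iff $\emptyset\models_k^{\leq}\alpha$. The key observation is that both notions of theoremhood reduce to the same semantic condition. First I would unwind the definition of $\models_k^{\leq}$: by clause (ii) of its definition, $\emptyset\models_k^{\leq}\alpha$ holds precisely when $v(\alpha)=1$ for every $(A,t)\in\mathbb{C}_k$ and every homomorphism $v\in Hom_{\mathbb{C}_k}(\mathfrak{Fm},(A,t))$. On the other side, unwinding the definition of $\models_k$ with $\Gamma=\emptyset$, the hypothesis ``$v(\gamma)=1$ for every $\gamma\in\Gamma$'' is vacuously true, so $\emptyset\models_k\alpha$ holds precisely when $v(\alpha)=1$ for every $(A,t)\in\mathbb{C}_k$ and every $v\in Hom_{\mathbb{C}_k}(\mathfrak{Fm},(A,t))$. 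These two conditions are literally the same, so the theorems coincide.

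Thus the proof is essentially a matter of carefully reading off the two definitions and noting that the clause governing the empty premise set in the degree-preserving logic was deliberately set up (clause (ii)) to match the $1$-assertional reading. I would write this out as a short chain of equivalences: $\emptyset\models_k\alpha$ $\Longleftrightarrow$ for all $(A,t)\in\mathbb{C}_k$ and all $v\in Hom_{\mathbb{C}_k}(\mathfrak{Fm},(A,t))$, $v(\alpha)=1$ $\Longleftrightarrow$ $\emptyset\models_k^{\leq}\alpha$. One may also phrase it algebraically via Proposition~\ref{regla}/the remark that $\emptyset\models_k^{\leq}\alpha$ iff the equation $\alpha\approx 1$ (equivalently $\top\preccurlyeq\alpha$) holds in the variety $\mathbb{C}_k$, and likewise $\emptyset\models_k\alpha$ iff $\alpha\approx 1$ holds in $\mathbb{C}_k$, since $1$ is the only designated value forced in every algebra.

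There is essentially no obstacle here; the statement is a direct consequence of the way the two logics were defined over the same class $\mathbb{C}_k$ with the same notion of homomorphism. The only point requiring a word of care is that ``having the same theorems'' is a statement about the $\emptyset$-fragment of the consequence relations only — the two logics genuinely differ on nonempty premise sets (indeed $\mathbb{L}_k^{\leq}$ is paraconsistent while $\mathbb{L}_k$ is not, as will be shown later) — so I would make explicit that the claim concerns $\{\alpha : \emptyset\models_k\alpha\} = \{\alpha : \emptyset\models_k^{\leq}\alpha\}$ and nothing more. I expect the author's proof to be just two or three lines along exactly these lines.
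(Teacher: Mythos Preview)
Your proof is correct and follows exactly the same approach as the paper, which simply states that the result follows immediately from the very definitions of $\models_k$ and $\models_k^{\leq}$. Your version is more explicit in spelling out why the empty-premise case of both definitions yields the identical condition $v(\alpha)=1$ for all algebras and homomorphisms, but the underlying argument is the same.
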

\begin{proof}
It immediately follows from the very definitions of $\models_k$ and $\models_{k}^{\leq}$.
\end{proof}

\begin{lem}\label{parac}
The logic $\mathbb{L}_k$ is paracomplete, but not paraconsistent.
\end{lem}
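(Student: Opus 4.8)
The plan is to exhibit concrete ${\cal C}_k$-algebras and valuations witnessing both properties, just as was done for $\mathbb{L}_{k}^{\leq}$ in Lemmas \ref{LFI}--\ref{LFU}, but now using the $1$-assertional consequence $\models_k$ instead of $\models_k^{\leq}$. For paracompleteness I would first show $\not\models_k \alpha \vee \sim\alpha$: take the simple ${\cal C}_k$-algebra $(T_{4,k},t)$ (or more simply $(T_4,Id)$, which is a ${\cal C}_k$-algebra) and a homomorphism $v\colon Fm\to (T_4,Id)$ with $v(p)=a$; since $a\vee \sim a = a\vee a = a \neq 1$ in $T_4$, we get $v(p\vee\sim p)\neq 1$, so $\emptyset\not\models_k p\vee\sim p$. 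This already shows $\mathbb{L}_k$ is paracomplete.

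For the failure of paraconsistency, the key point is that $\models_k$ preserves the top element $1$ rather than arbitrary lower bounds, and in every ${\cal C}_k$-algebra the only Boolean-style behaviour forced by $x\wedge\sim x$ being ``large'' is at the top. Concretely I would argue: if $v(p)=1$ and $v(\sim p)=1$ for a homomorphism $v$ into some $(A,t)\in\mathbb{C}_k$, then $v(p)\wedge v(\sim p)=1$, i.e. $v(p)\wedge\sim v(p)=1$; but in any $mpM$-algebra $x\wedge\sim x=1$ forces $x=1$ and $\sim x=1$, and since $1\leq v(q)$ trivially gives $v(q)=1$, we conclude $p,\sim p\models_k q$. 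Thus $\mathbb{L}_k$ explodes: from $p$ and $\sim p$ everything follows, so it is not paraconsistent. The only genuinely algebraic fact needed here is that $x\wedge\sim x=1$ implies $x=1$ in $mpM$-algebras, which is immediate from checking $T_3$ and $T_4$ (the generating algebras, see Section \ref{preli}): there is no element $x$ with $x\wedge\sim x=1$ except $x=1$ itself, since $\sim$ is order-reversing with no fixed point at the top other than $1$ mapping to $0$.

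I would present the argument in this order: (1) recall that $(T_4,Id)\in\mathbb{C}_k$ and fix the valuation $v(p)=a$ to kill $p\vee\sim p$, giving paracompleteness; (2) observe that $x\wedge\sim x=1 \Rightarrow x=1$ holds in $T_3$ and $T_4$, hence in every $mpM$-algebra, hence in every ${\cal C}_k$-algebra; (3) deduce $p,\sim p\models_k q$ for all $p,q$ from (2) together with the definition of $\models_k$, so explosion holds and $\mathbb{L}_k$ is not paraconsistent. The main (very mild) obstacle is simply being careful that the defining clause of $\models_k$ quantifies over \emph{all} homomorphisms and algebras, so the explosion direction must be checked uniformly: but since the implication $x\wedge\sim x=1\Rightarrow x=1$ is a universally valid quasi-identity in $\mathbb{C}_k$, no single-algebra subtlety arises, and the routine table-checking in $T_3,T_4$ finishes it.
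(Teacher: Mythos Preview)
Your proposal is correct and follows the same approach as the paper: both use $(T_4,\mathrm{Id})$ with $v(p)=a$ for paracompleteness, and both argue that the premises $v(p)=1$, $v(\sim p)=1$ are jointly unsatisfiable, so explosion $p,\sim p\models_k q$ holds vacuously.

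One step in your write-up is garbled, though. From ``$x\wedge\sim x=1$ forces $x=1$ and $\sim x=1$'' you jump to ``since $1\leq v(q)$ trivially gives $v(q)=1$'', which is a non sequitur as written. The missing link is that $\sim 1=0$ in every De Morgan algebra, so $x=1$ together with $\sim x=1$ yields $0=1$; hence either the algebra is trivial (and then $v(q)=1$ automatically) or no such valuation exists at all. In particular, in a non-trivial ${\cal C}_k$-algebra there is \emph{no} element $x$ with $x\wedge\sim x=1$---not even $x=1$, contrary to what your last paragraph suggests, since $1\wedge\sim 1=1\wedge 0=0$. This is precisely what the paper compresses into the single phrase ``the negation $\sim$ is involutive''. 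Once you insert that observation, your argument is complete and identical in substance to the paper's.
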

\begin{proof}
Taking the simple ${\cal C}_k$-algebra $(T_4,id)$ and $v(p)=a$ and $v(\sim p)=a$, then we have that $\not\models_k p \vee \sim p$. Now, from the fact that the negation $\sim$ is involutive in every ${\cal C}_k$-algebra, we have that $p,\sim p \models_k q$ holds for every variable $p$ and $q$.

\end{proof}

\begin{lem}
The logic $\mathbb{L}_k$ is a  LFU with respect to $\sim$ and $\circ$ (defined in the above section).  
\end{lem}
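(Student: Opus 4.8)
The plan is to verify the three defining clauses of an LFU for $\mathbb{L}_{k}$ with the operators $\sim$ and $\circ$, reusing the algebraic facts already collected in the previous section for $\mathbb{L}^{\leq}_{k}$; since every clause only mentions the designated value $1$, passing from the degree-preserving consequence $\models^{\leq}_{k}$ to the truth-preserving $\models_{k}$ changes nothing in the witnesses. Clause (i), paracompleteness, is exactly Lemma \ref{parac}: evaluating in the simple ${\cal C}_k$-algebra $(T_4,Id)$ (note $Id^k=Id$, so it is indeed a ${\cal C}_k$-algebra) with $v(p)=a$ gives $v(p\vee\sim p)=a\vee a=a\neq 1$, hence $\not\models_{k} p\vee\sim p$.

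For clause (ii) I would use the same separating valuations as in Lemma \ref{LFU}. Since $0,1\in B(T_4)$, the earlier Proposition stating that $x\in B(A)$ iff $\circ x=1$ yields $\circ 0=\circ 1=1$ (equivalently, $\circ 0=1$ is Lemma \ref{PP}(i)). Hence the valuation with $v(\bot)=0$ has $v(\circ\bot)=1$ but $v(\bot)=0\neq 1$, giving (ii.a) $\circ\bot\not\models_{k}\bot$; and the valuation with $v(\top)=1$ has $v(\circ\top)=1$ but $v(\sim\top)=\sim 1=0\neq 1$, giving (ii.b) $\circ\top\not\models_{k}\sim\top$. (If one prefers a single witnessing formula, one may instead take a propositional variable $p$ with the two valuations $v(p)=0$ and $v(p)=1$.)

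For clause (iii), $\circ\alpha\models_{k}\alpha\vee\sim\alpha$ for every $\alpha$, I would invoke the purely algebraic inequality $\circ x\leq x\vee\sim x$, which holds in every ${\cal C}_k$-algebra: the $i=0$ conjunct of $\circ x=\bigwedge_{i=0}^{k}t^{i}\bigl((\sim x\vee x)\wedge(x\wedge\sim x)^{\ast}\bigr)$ is already $\leq\sim x\vee x$, and $\circ x$ lies below that conjunct. Thus for any $(A,t)\in\mathbb{C}_k$ and any $v\in Hom_{\mathbb{C}_k}(Fm,(A,t))$ with $v(\circ\alpha)=1$ we get $1=v(\circ\alpha)\leq v(\alpha)\vee\sim v(\alpha)=v(\alpha\vee\sim\alpha)$, so $v(\alpha\vee\sim\alpha)=1$. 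This settles all three clauses. There is no genuine obstacle here; the only point deserving attention is to confirm that the witnesses used for $\mathbb{L}^{\leq}_{k}$ in Lemmas \ref{parac} and \ref{LFU} remain valid for the $1$-assertional consequence $\models_{k}$, which they do precisely because they all concern the value $1$.
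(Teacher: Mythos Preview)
Your proposal is correct and follows essentially the same route as the paper, which simply writes that the result ``is an immediate consequence from Lemmas \ref{parac} and \ref{LFU}''; you just spell out explicitly why the witnesses from Lemma \ref{LFU} transfer from $\models^{\leq}_{k}$ to $\models_{k}$ (because each clause is stated at the value $1$), and you even note the option of using a single variable $p$ for (ii.a)--(ii.b), which is a cleaner match to the existential ``there is a formula $\varphi$'' in the LFU definition than the paper's own use of $\bot$ and $\top$.
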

\begin{proof}
It is an immediately consequence from Lemmas \ref{parac} and  \ref{LFU}.
\end{proof}

\

On the other hand, Blok and Pigozzi developed the theory of algebraizable logics in \cite{BP} using a generalization of the usual Lindenbaum-Tarski  process.  To present a characterization  of  algebraizable,  we will consider the propositional signature, $\Theta$, and let $\mathscr{L}$ be a standard propositional logic defined over the language $\mathcal{L}_{\Theta}$, with a consequence relation $\vdash_{\mathcal{L}}$. Then $\mathscr{L}$ is algebraizable in the sense of Blok and Pigozzi if there exists a nonempty set $\Delta(p_1,p_2)\subseteq\mathcal{L}_{\Theta}$ of formulas depending on variables $p_1$ and $p_2$, and a nonempty set $E(p_1)\subseteq\mathcal{L}_{\Theta}\times\mathcal{L}_{\Theta}$ of pairs of formulas depending on variable $p_1$ satisfying the following properties:
\begin{itemize}
    \item[\rm (C1)] $\vdash_{\mathscr{L}}\delta(p_1,p_1)$, for every $\delta(p_1,p_2)\in\Delta(p_1,p_2)$;
   
    \item[\rm (C2)] $\Delta(p_1,p_2)\vdash_{\mathscr{L}}\delta(p_2,p_1)$, for every $\delta(p_1,p_2)\in\Delta(p_1,p_2)$;
   
    \item[\rm (C3)] $\Delta(p_1,p_2)$, $\Delta(p_2,p_3)\vdash_{\mathscr{L}}\delta(p_1,p_3)$, for every $\delta(p_1,p_2)\in$ $\Delta(p_1,p_2)$;
   
    \item[\rm (C4)] $\Delta(p_1,p_{n+1}),\ldots,\Delta(p_n,p_{2n})\vdash_{\mathscr{L}}\delta(\#(p_1,\ldots,p_n),\#(p_{n+1},\ldots,p_{2n}))$, for every $\delta(p_1,p_2)\in\Delta(p_1,p_2)$, every n-ary connective $\#$ of $\Theta$ and every $n\geq 1;$
   
    \item[\rm (C5)] $p_1\vdash_{\mathscr{L}}\delta(\gamma(p_1),\epsilon(p_1))$, for every $\delta(p_1,p_2)\in \Delta(p_1,p_2)$ and every\\
    $\langle \gamma(p_1),\epsilon(p_1) \rangle\in E(p_1)$;
   
    \item[\rm (C6)] $\big\{\delta(\gamma(p_1),\epsilon(p_1)): \delta(p_1,p_2)\in\Delta(p_1,p_2), \langle\gamma(p_1),\epsilon(p_1)\rangle\in E(p_1) \big\}\vdash_{\mathscr{L}}p_1$.
\end{itemize}
The sets $\Delta(p_1,p_2)$ and $E(p_1)$ are called system of equivalence formulas and defining equations, respectively. In order to see that the logic  $\mathbb{L}_{k}$ is algebraizable, we will consider the following two binary operations defined on the above section: 

$$x\to y:= \sim ((\sim (x\vee y))^* \wedge (x\vee y)) \vee (( \sim (x\wedge y))^* \wedge (x\wedge y))$$

$$x\leftrightarrow y:= (x\to y)\wedge (\sim y \to \sim x)$$

Now, let us take formula $\delta(p_1,p_2)$ defined as follows:
$$\delta(p_1,p_2)=(p_1\leftrightarrow p_2)\wedge ( t p_1 \leftrightarrow t p_2)\wedge( t^2 p_1 \leftrightarrow t^2 p_2)\wedge \cdots \wedge ( t^{k-1} p_1 \leftrightarrow t^{k-1} p_2)  $$
The  system of equivalence formulas is given by $\Delta(p_1,p_2)=\{\delta(p_1,p_2)\}$ and the system of defining equations is given by $E(p_1)=\{p_1,p_1 \leftrightarrow p_1\}$.

\begin{theorem}\label{alge}
The logic $\mathbb{L}_{k}$ is algebraizable in the sense of Blok-Pigozzi  with the system of equivalence formulas  $\Delta(p_1,p_2)$ and the system of defining equations $E(p_1)=\{p_1, p_1\leftrightarrow  p_1\}$.
\end{theorem}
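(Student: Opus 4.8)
The plan is to verify the six Blok--Pigozzi conditions (C1)--(C6) directly, using the algebraic properties of the implication $\to$, the biconditional $\leftrightarrow$, and the compound formula $\delta(p_1,p_2)$ collected in Proposition \ref{propiedades}, together with the fact that $\models_k$ is the $1$-assertional logic of $\mathbb{C}_k$ (so that a deduction $\Gamma\vdash_{\mathbb{L}_k}\varphi$ holds iff in every $(A,t)\in\mathbb{C}_k$ and every valuation sending all of $\Gamma$ to $1$ one also has $\varphi$ evaluated to $1$). Note that $x\leftrightarrow y$ is exactly the operation $x\Rightarrow y$ of Proposition \ref{propiedades}, so all of its listed properties are available; and $\delta$ is a finite meet of the $t$-shifts $t^j p_1 \leftrightarrow t^j p_2$, $0\le j\le k-1$. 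A key preliminary observation I would record is an algebraic lemma: for all $x,y$ in a ${\cal C}_k$-algebra, $\delta(x,y)=1$ if and only if $t^j x = t^j y$ for all $j$, i.e. (since $t$ is an automorphism) if and only if $x=y$; this follows from Proposition \ref{propiedades}(iii),(ix) applied to each conjunct. This lemma is what makes the defining equation $E(p_1)=\{\langle p_1,\ p_1\leftrightarrow p_1\rangle\}$ work, because $p_1\leftrightarrow p_1$ always evaluates to $1$.

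The individual verifications then go as follows. (C1) $\vdash_{\mathbb{L}_k}\delta(p_1,p_1)$ is immediate from Proposition \ref{propiedades}(iii): each conjunct $t^j p_1\leftrightarrow t^j p_1$ evaluates to $1$, hence so does their meet. (C2) From the symmetry $x\leftrightarrow y = y\leftrightarrow x$ (Proposition \ref{propiedades}(i)) each conjunct of $\delta(p_2,p_1)$ equals the corresponding conjunct of $\delta(p_1,p_2)$, so in fact $\delta(p_1,p_2)$ and $\delta(p_2,p_1)$ are identical terms; transitivity of the deduction is trivial. (C3) Uses Proposition \ref{propiedades}(iv): if $\delta(x,y)=1$ and $\delta(y,z)=1$ then each $t^j x\leftrightarrow t^j y = 1$ and $t^j y\leftrightarrow t^j z=1$, and since $t^j$ is an automorphism the transitivity inequality $(u\leftrightarrow v)\wedge(v\leftrightarrow w)\le (u\leftrightarrow w)$ gives $t^j x\leftrightarrow t^j z = 1$ for all $j$, whence $\delta(x,z)=1$. (C4) is the congruence/replacement condition; for the unary connectives $\sim$, ${}^\ast$, $t$ it follows from Proposition \ref{propiedades}(ii),(vii) and the fact that $t^k=\mathrm{id}$ (so a $t$-shift of a shifted biconditional is again among the conjuncts, up to the cyclic reindexing argument already used in Lemma \ref{C2L2}), and for the binary connectives $\wedge,\vee$ from Proposition \ref{propiedades}(v),(vi), again applied conjunct-by-conjunct after pushing $t^j$ through. (C5) For the pair $\langle p_1, p_1\leftrightarrow p_1\rangle\in E(p_1)$ we must show $p_1\vdash_{\mathbb{L}_k}\delta(p_1,\ p_1\leftrightarrow p_1)$: but if $v(p_1)=1$ then $v(p_1\leftrightarrow p_1)=1$ as well, so $v(p_1)=v(p_1\leftrightarrow p_1)$, and the preliminary lemma gives $v(\delta(p_1,\ p_1\leftrightarrow p_1))=1$. (C6) Conversely, from $\delta(p_1,\ p_1\leftrightarrow p_1)=1$ the preliminary lemma yields $v(p_1)=v(p_1\leftrightarrow p_1)=1$, so $p_1$ is derived; here Proposition \ref{propiedades}(ix), $(1\Rightarrow x)=1$ iff $x=1$, is the precise tool.

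The main obstacle I anticipate is condition (C4) for the unary operator $t$: one must make sure that applying $t$ to $\delta(p_1,p_2)$ produces something that the hypotheses $\Delta(p_1,p_2)$ already entail. Since $t\,\delta(p_1,p_2) = \bigwedge_{j=0}^{k-1}\bigl(t^{j+1}p_1 \leftrightarrow t^{j+1}p_2\bigr)$ and the exponents $j+1$ range over a cyclic shift of $\{0,\dots,k-1\}$ modulo $k$ (using $t^k=\mathrm{id}$), this meet equals $\delta(p_1,p_2)$ itself — exactly the reindexing trick ($\ast$) from the proof of Lemma \ref{C2L2}. Once this is in place the rest is a routine conjunct-by-conjunct transfer of the inequalities in Proposition \ref{propiedades}, since all the connectives involved lie in the $mpM$-language for which those identities were verified on the generating algebras $T_3,T_4$. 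Having checked (C1)--(C6), Blok and Pigozzi's criterion \cite{BP} gives that $\mathbb{L}_k$ is algebraizable, with equivalent algebraic semantics the quasivariety generated by $\mathbb{C}_k$, which by semisimplicity and Proposition \ref{algebras} is $\mathbb{C}_k$ itself.
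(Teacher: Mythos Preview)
Your proof is correct and follows the same route as the paper: verify (C1)--(C6) directly from Proposition~\ref{propiedades}, including the cyclic reindexing argument for the $t$-case of (C4). One minor point: your ``preliminary lemma'' $\delta(x,y)=1\Leftrightarrow x=y$ is stated more generally than what (iii) and (ix) alone justify --- (ix) only handles the case $1\leftrightarrow x$, and the general backward direction would need (viii) instead --- but this is harmless, since in (C5) you use only the forward direction and in (C6) you in fact fall back on (ix) with one argument equal to $v(p_1\leftrightarrow p_1)=1$, which is exactly the paper's argument.
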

\begin{proof}
We have to prove that $\models_k$ satisfies (C1) to (C6).  The fact that (C1) and (C2) hold follows immediately from Proposition \ref{propiedades} (iii) and (i), respectively.

\noindent (C3): Let us suppose that for every $v\in Hom_{\mathbb{C}_k}(Fm,(A,t))$ we have: 

$$v((p_1\leftrightarrow p_2)\wedge ( t p_1 \leftrightarrow t p_2)\wedge( t^2 p_1 \leftrightarrow t^2 p_2)\wedge \cdots \wedge ( t^{k-1} p_1 \leftrightarrow t^{k-1} p_2)=1$$

and 

$$v((p_2\leftrightarrow p_3)\wedge ( t p_2 \leftrightarrow t p_3)\wedge( t^2 p_2 \leftrightarrow t^2 p_3)\wedge \cdots \wedge ( t^{k-1} p_2 \leftrightarrow t^{k-1} p_3)=1$$

Then, it is clear that $v(p_1\leftrightarrow p_2)=1$ and $v(p_2\leftrightarrow p_3)=1$; moreover, $v(t^i p_1\leftrightarrow t^i p_2)=1$ and $v(t^i p_2\leftrightarrow t^i p_3)=1$ for any $i=1,\cdots, k-1$. From the latter and Proposition \ref{propiedades} (iv), $v(p_1\leftrightarrow p_3)=1$ and  $v(t^i p_1\leftrightarrow t^i p_3)=1$ for any $i=1,\cdots, k-1$. 

\noindent (C4): Suppose  for any $v\in Hom_{\mathbb{C}_k}(Fm,(A,t))$ and the following identities  $v(\delta(p_1,p_2))=1$ and  $v(\delta(p_3,p_4))=1$ hold. So, $v(t^i p_1\leftrightarrow t^i p_2)=1$ and $v(t^i p_3\leftrightarrow t^i p_4)=1$ for any $i=1,\cdots, k$. Now from the items (v) and (iv) of Proposition \ref{propiedades}, we have that  $v(t^i p_1 \# t^i p_3 \leftrightarrow t^i p_2 \# t^i p_4 )=1$ and so $v(\delta(p_1\# p3,p_2\# p_4))=1$ where $\#\in \{\wedge,\vee\}$. Now it is not hard to see that if $v(\delta(p_1,p_2))=1$, then $v(\delta(\#p_1,\#p_2))=1$ with $\#\in\{^*, \sim,t\}$.

\noindent (C5): Suppose  for any $v\in Hom_{\mathbb{C}_k}(Fm,(A,t))$ we have $v(p_1)=1$, then it is not hard to see that $v(\delta(p_1,p_1))=1$ and $v(\delta(p_1,p_1 \leftrightarrow p_1))=1$.

\noindent (C6): Suppose  for any $v\in Hom_{\mathbb{C}_k}(Fm,(A,t))$, we have $v(\delta(p_1,p_1))=1$ and $v(\delta(p_1,p_1 \leftrightarrow p_1))=1$. So, $v( p_1\leftrightarrow p_1)=1$ and then we have $1 \leftrightarrow v(p_1)=1$. From Proposition \ref{propiedades} (viii), we have that $v(p_1)=1$ as desired.
\end{proof}

\

As consequence of the last Theorem and some general results of the Blok-Pigozzi theory, we have the following:

\begin{coro}\label{counter2}
$Alg^*(\mathbb{L}_{k})=\mathbb{C}_k$.
\end{coro}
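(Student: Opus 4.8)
The plan is to derive the identity from Theorem~\ref{alge} together with the general machinery of Blok--Pigozzi theory, in the same spirit as Corollary~\ref{counter1} was obtained for $\mathbb{L}_{k}^{\leq}$. By Theorem~\ref{alge} the logic $\mathbb{L}_{k}$ is algebraizable, with equivalence formulas $\Delta(p,q)=\{\delta(p,q)\}$ and defining equations $E(p)=\{\langle p,p\leftrightarrow p\rangle\}$; hence, by the standard correspondence (see \cite{BP} and \cite{Font}), it has a unique equivalent quasivariety semantics, this quasivariety equals $Alg^{*}(\mathbb{L}_{k})$, and it is generated, as a quasivariety, by the algebra reducts of the reduced matrix models of $\mathbb{L}_{k}$. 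Since, by its very definition, $\mathbb{L}_{k}$ is the logic determined by the family of matrices $\{\,\langle (A,t),\{1\}\rangle : (A,t)\in\mathbb{C}_k\,\}$, the whole problem reduces to showing that each of these matrices is reduced; the conclusion then follows because $\mathbb{C}_k$ is a variety, hence coincides with the quasivariety it generates.

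For the reducedness, recall that ${\bf \Omega}_{A}\{1\}$ is, by definition, the largest congruence of $(A,t)$ compatible with $\{1\}$, i.e. the largest congruence whose $1$-block equals $\{1\}$. By Remark~\ref{Ockc} the congruences of a ${\cal C}_k$-algebra are exactly the relations $R(F)$ with $F$ a $c$-filter, and the $1$-block of $R(F)$ is $\{x:\exists f\in F,\ f\wedge x=f\}=F$, using that a $c$-filter is an up-set. Hence the unique congruence with $1$-block $\{1\}$ is $R(\{1\})=\{(x,y):x\wedge 1=y\wedge 1\}$, which is the identity relation; so ${\bf \Omega}_{A}\{1\}=Id_{A}$ and $\langle (A,t),\{1\}\rangle$ is reduced. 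Consequently $\mathbb{C}_k\subseteq Alg^{*}(\mathbb{L}_{k})$ and $Alg^{*}(\mathbb{L}_{k})$ equals the quasivariety $\mathbb{Q}(\mathbb{C}_k)$ generated by $\mathbb{C}_k$; since $\mathbb{C}_k$ is a variety, it is closed under $\mathbf{H},\mathbf{S},\mathbf{P},\mathbf{P}_{U}$, so $\mathbb{Q}(\mathbb{C}_k)=\mathbb{C}_k$, which gives $Alg^{*}(\mathbb{L}_{k})=\mathbb{C}_k$.

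One can also check the two inclusions directly, which makes clear where the hypotheses enter. From algebraizability one gets, for all formulas $\varphi,\psi$, that $\models_{Alg^{*}(\mathbb{L}_{k})}\varphi\approx\psi$ holds iff $\models_{k}\delta(\varphi,\psi)$; and since $\delta(a,b)=1$ iff $a=b$ in every ${\cal C}_k$-algebra — immediate from Proposition~\ref{propiedades}(iii) and (viii), as $\leftrightarrow$ is there the operation $\Rightarrow$ — unwinding the definition of $\models_{k}$ shows that $\models_{k}\delta(\varphi,\psi)$ iff $\mathbb{C}_k\models\varphi\approx\psi$. Thus $Alg^{*}(\mathbb{L}_{k})$ and $\mathbb{C}_k$ validate exactly the same identities, and, $\mathbb{C}_k$ being a variety, $Alg^{*}(\mathbb{L}_{k})\subseteq\mathbb{C}_k$. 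The reverse inclusion $\mathbb{C}_k\subseteq Alg^{*}(\mathbb{L}_{k})$ is the non-formal part and is precisely what the reducedness of the matrices $\langle (A,t),\{1\}\rangle$ supplies; I expect this to be the main (indeed essentially the only) point that needs an argument, and it is nothing more than the $1$-regularity of $\mathbb{C}_k$ recorded in Remark~\ref{Ockc}. As with Corollary~\ref{counter1}, an alternative phrasing goes through the equality $Alg^{*}(\mathbb{L}_{k})=Alg(\mathbb{L}_{k})$, which holds for every algebraizable logic.
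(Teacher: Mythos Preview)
Your proof is correct and follows essentially the same approach as the paper, which after Theorem~\ref{alge} simply appeals to general Blok--Pigozzi theory (Section~3 and Proposition~3.15 of \cite{Font}). Your argument is considerably more explicit---verifying reducedness of the matrices $\langle (A,t),\{1\}\rangle$ via Remark~\ref{Ockc} and unwinding both inclusions---but the underlying idea, that algebraizability identifies $Alg^{*}(\mathbb{L}_k)$ with the (unique) equivalent algebraic semantics $\mathbb{C}_k$, is the same.
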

\begin{proof}
See section 3 and  Proposition 3.15 of \cite{Font}.
\end{proof}

\

It is important to note that $\mathbb{L}_{k}$ is not Rasiowa's {implicative logic} using $\delta(p_1,p_2)$ as implication, see \cite[Definition 2.5]{Font} because  it is not verified:  if $v((p_1\leftrightarrow p_2)\wedge ( t p_1 \leftrightarrow t p_2)\wedge( t^2 p_1 \leftrightarrow t^2 p_2)\wedge \cdots \wedge ( t^{k-1} p_1 \leftrightarrow t^{k-1} p_2)=1$, then $v(p_1)=v(p_2)$ for every ${\cal C}_k$-algebra.

\section{Conclusions} 

In this paper, we have studied the class of $k$-cyclic modal pseudocomplemented De Morgan algebras proving that it is a semsisimple variety and determining the generating algebras for every positive integer $k$. From the algebraic studies, we have displayed a family of sentential logics $\mathbb{L}^\leq_{k}$ and $\mathbb{L}_{k}$ that have as algebraic counterpart the class of algebras introduced previously, this fact has been settled in Corollaries \ref{counter1} and \ref{counter2}. As the main results that emerge from our studies, it has been shown that although the logics $\mathbb{L}^\leq_{k}$ and $\mathbb{L}_{k}$ share the same theorems,  $\mathbb{L}^\leq_{k}$ is a paraconsistent non-algebraizable logic; in contrast, $\mathbb{L}_{k}$  is a non-paraconsistent algebraizable logic.


\

\

\noindent  A. Figallo-Orellano\\
Departamento de  Matem\'{a}tica\\
Universidad Nacional del Sur (UNS)\\
Bah\'ia Blanca, Argentina\\
e-mail:  {\em  aldofigallo@gmail.com}

\

\

\noindent M. P\'erez-Gaspar\\
Facultad de Ingenirer\'ia\\
Universidad Nacional Aut\'onoma de M\'exico (UNAM)\\
Ciudad de  M\'exico, M\'exico\\
e-mail:  {\em miguel.perez@fi-b.unam.mx}

\

\

\noindent J. M. Ram\'irez-Contreras \\
Departamento de Actuar\'{\i}a, F\'{\i}sica y Matem\'{a}ticas\\
Universidad de las Am\'ericas Puebla (UDLAP)\\
Puebla, M\'exico\\
e-mail: {\em juan.ramirez@udlap.mx}

\end{document}